\newcommand {\C} {{\mathbb C}}
\newcommand {\R} {{\mathbb R}}
\newcommand {\Z} {{\mathbb Z}}
\newcommand {\Q} {{\mathbb Q}}
\newcommand {\cF} {{\mathcal F}}
\newcommand {\tcF} {\tilde{{\mathcal F}}}
\newcommand {\cH} {{\mathcal H}}
\newcommand {\cG} {{\mathcal G}}
\newcommand {\cE} {{\mathcal E}}
\newcommand {\cM} {{\mathcal M}}
\newcommand {\cO} {{\mathcal O}}
\newcommand {\D} {\mathbb{D}}
\newcommand {\dB}{\underline{\Omega}_X}
\newcommand{\tX}{\tilde{X}}
\newcommand{\tH}{\tilde{H}}
\newcommand{\cD}{\mathcal{D}}
\newcommand{\cX}{\mathcal{X}}
\newcommand{\bR}{\textbf{R}}
\newcommand{\Om}{\underline{\Omega}_{X}^{0}}
\newcommand{\Omp}{\underline{\Omega}_{X}^{p}}
\DeclareMathOperator{\im}{im}
\DeclareMathOperator{\Spec}{Spec}
\DeclareMathOperator\supp{supp}
\DeclareMathOperator\depth{depth}
\DeclareMathOperator\lcd{lcd}
\newcommand{\bT}{\begin{tikzcd}}
\newcommand{\eT}{\end{tikzcd}}
\newtheorem{thm}[subsection]{Theorem}
\newtheorem{cor}[subsection]{Corollary}
\newtheorem{lemma}[subsection]{Lemma}
\newtheorem{prop}[subsection]{Proposition}
\newtheorem{defn}[subsection]{Definition}
\newtheorem{rmk}[subsection]{Remark}
\newtheorem{ex}[subsection]{Example}
\newtheorem{conj}[subsection]{Conjecture}
\newtheorem{nota}[subsection]{Notation}
\begin{document}
\author{Donu Arapura}
\thanks{First author partially supported by a grant from the Simons Foundation}
\address{
 Department of Mathematics\\
  Purdue University\\
  West Lafayette, IN 47907\\
  U.S.A.}
  \email{arapura@purdue.edu}

\date{\today}
\author{Scott Hiatt }
 \address{
 Department of Mathematics\\
  University of Wisconsin-Madison\\
  Madison, WI 53705\\
  U.S.A.}
  \email{shiatt@wisc.edu}

 \title{Differential Forms and Hodge Structures on Singular Varieties}

\maketitle

\begin{abstract}
    We compare a few notions of differential forms on singular complex algebraic varieties and relate them to the outermost associated graded spaces of the Hodge filtration of ordinary and intersection cohomology.  In particular,  we introduce and study singularities, which we call quasi-rational, that are normal and such that for all p, the zeroth cohomology sheaf of the complex of Du Bois p-forms is isomorphic to the direct image of p-forms from a desingularization.  We show that an isolated singularity is rational if and only if it is quasi-rational,
    Du Bois, and specific Hodge numbers of the local mixed Hodge structures vanish.
   
\end{abstract}

\tableofcontents

\section*{Introduction}

In this paper, we work over $\C$. We will consider several notions of (sheaves of) differential forms on a singular algebraic variety $X$:
K\"ahler differentials $\Omega_X^p$,
 the direct image $\tilde \Omega_{X}^p = f_*\Omega_{\tilde X}^p$
 for some (or any) resolution of singularities $f:\tilde X\to X$, and   the Du Bois complex  $\underline{\Omega}_X^p$ and more.
 The first is the simplest but most badly behaved; the quotient $\Omega_X^p/\text{torsion}$ is a bit better. Pull back of forms induces an injective map
 $$\Omega_X^p/\text{torsion}\to \tilde \Omega_X^p.$$
  Even if one does not start out caring about $\underline{\Omega}_X^p$, it enters
 naturally at this point because, as we will see, the above map factors through the zeroth cohomology sheaf $\cH^{0}(\underline{\Omega}_{X}^{p})$. If we take $f: \tX \rightarrow X$ to be a (or any) strong log resolution with exceptional divisor $E$ with simple normal crossing support, we can also consider $\tilde{\Omega}^{p}_{X}(\log )= f_*\Omega_{\tilde X}^p(\log E).$ Finally, if we set $U$ to denote the nonsingular set of $X$ with inclusion map $j: U \hookrightarrow X$, we also have the sheaf $\Omega^{[p]}_{X}:=j_{*}\Omega^{p}_{U}$. With all this notation, we always have the following maps
 $$\Omega_X^p/\text{torsion}\to \cH^{0}(\underline{\Omega}_{X}^{p}) \to \tilde \Omega_X^p \to  \tilde \Omega_X^p(\log ) \to \Omega^{[p]}_{X}.$$
In general, these maps are strict inclusion maps. Our main result is that the vanishing of the higher direct images of the structure sheaf of a (or any) resolution of singularities implies the equivalence of the majority of these different notions of differential forms.

 \begin{thm}
     Let $X$ be a normal variety and $f: \tX \rightarrow X$ a resolution of singularities.      If 
      $R^{i}f_{*}\cO_{\tX} = 0$ for $1 \leq i \leq k$, then the natural maps $\cH^{0}(\underline{\Omega}_{X}^{i}) \rightarrow \tilde \Omega_X^i \to  \tilde \Omega_X^i(\log )  \rightarrow \Omega^{[i]}_{X}$ are isomorphisms for $i \leq k.$
\end{thm}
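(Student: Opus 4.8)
Fix a strong log resolution $f:\tX\to X$ with reduced exceptional divisor $E$ of simple normal crossing support, let $Z\subset X$ be the (reduced) singular locus, and set $g=f|_E\colon E\to Z$; since the assertion is local I will work near $Z$, and over the smooth locus $U=X\setminus Z$ all four sheaves equal $\Omega_U^i$. The introduction already records that the chain $\cH^{0}(\underline{\Omega}_{X}^{i})\to\tilde\Omega_X^i\to\tilde\Omega_X^i(\log)\to\Omega^{[i]}_{X}=j_*\Omega_U^i$ consists of \emph{inclusions} of subsheaves of $j_*\Omega^i_U$. Hence, to prove that every map is an isomorphism for $i\le k$, it suffices to establish the single reverse containment $\Omega^{[i]}_{X}\subseteq \cH^{0}(\underline{\Omega}_{X}^{i})$ inside $j_*\Omega_U^i$; equivalently, that every regular $i$-form on $U$ extends to a section of $\cH^{0}(\underline{\Omega}_{X}^{i})$.

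\textbf{Identifying $\cH^0$ through Du Bois descent.} The structural tool I would use is the descent (abstract blowup) triangle for the Du Bois complex attached to the square $(E\subset\tX)\to(Z\subset X)$,
\[
\underline{\Omega}_{X}^{i} \longrightarrow Rf_*\Omega_{\tX}^i \oplus \underline{\Omega}_{Z}^{i} \longrightarrow Rg_*\underline{\Omega}_{E}^{i}\xrightarrow{\;+1\;}.
\]
Taking $\cH^0$ (the $E$-term has no cohomology in negative degrees) realizes $\cH^{0}(\underline{\Omega}_{X}^{i})$ as the kernel of the boundary map $f_*\Omega_{\tX}^i\oplus \cH^0(\underline{\Omega}_{Z}^{i})\to g_*\cH^0(\underline{\Omega}_{E}^{i})$, with the first map of the chain identified with the projection to $\tilde\Omega_X^i=f_*\Omega_{\tX}^i$. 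Thus this map is an isomorphism precisely when the restriction-to-$E$ of a regular form factors through the $\underline{\Omega}_{Z}^{i}$-contribution. To reach the log and reflexive versions I would run this alongside the residue sequence $0\to\Omega_{\tX}^i\to\Omega_{\tX}^i(\log E)\to\cQ\to 0$ with $\cQ$ supported on $E$, pushing it forward to compare $\tilde\Omega_X^i$, $\tilde\Omega_X^i(\log)$ and the boundary terms.

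\textbf{Inputting the hypothesis.} This is where $R^{i}f_{*}\cO_{\tX}=0$ for $1\le i\le k$ enters. The point is that $\Omega^0_{\tX}(\log E)=\cO_{\tX}$, so the hypothesis is exactly the vanishing of the higher cohomology of the lowest Hodge-graded piece of $Rf_*$ of the logarithmic de Rham complex of the pair $(\tX,E)$. By the $E_1$-degeneration of the associated Hodge-to-de Rham spectral sequence, together with the duality between the graded piece in degree $p$ and that in top degree supplied by Grauert--Riemenschneider vanishing $R^jf_*\omega_{\tX}=0$ $(j>0)$ and Grothendieck duality for $f$, I expect this to force $R^jf_*\Omega_{\tX}^i(\log E)=0$ and the vanishing of the relevant boundary cohomology $Rg_*\underline{\Omega}_{E}^{i}$ for $j\ge 1$ and $i\le k$. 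Granting these, the boundary map of the previous step dies, giving $\cH^{0}(\underline{\Omega}_{X}^{i})\cong\tilde\Omega_X^i$; the pushed-forward residue sequence gives $\tilde\Omega_X^i\cong\tilde\Omega_X^i(\log)$; and the resulting logarithmic extension, combined with normality (so that reflexive sections are detected on $U$ by $\depth\ge 2$), upgrades this to $\tilde\Omega_X^i(\log)\cong\Omega^{[i]}_{X}$, which is the containment needed in the first step.

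\textbf{The main obstacle.} The genuine difficulty is the propagation in the third step: transferring a hypothesis about $R^\bullet f_*\cO_{\tX}$, i.e. the $p=0$ row alone, into vanishing for the mixed objects $R^jf_*\Omega_{\tX}^i(\log E)$ and $Rg_*\underline{\Omega}_{E}^{i}$ across all $i\le k$. This is not a formal diagram chase; it requires a genuine Hodge-theoretic strictness input—morally the degeneration of the weight and Hodge spectral sequences for $(\tX,E)$ relative to $X$, or the local invariant cycle theorem / decomposition of $Rf_*\C$—to let the structure-sheaf vanishing govern the higher forms. Once that propagation is secured, the logarithmic comparison via the residue sequence and the reflexive extension via normality should follow as comparatively routine consequences.
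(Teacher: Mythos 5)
Your opening reduction is sound and in fact matches the paper's own endgame (Corollary \ref{cor:Omegareflex}): all the maps in the chain are inclusions of torsion-free sheaves which are isomorphisms over the smooth locus, so everything follows once $\cH^{0}(\underline{\Omega}_{X}^{i}) \to \Omega^{[i]}_{X}$ is shown to be an isomorphism, and the blowup (descent) triangle you write down is a legitimate tool. The proof collapses, however, exactly at the step you flag as the ``main obstacle,'' and for two separate reasons. First, the vanishing you hope to propagate is simply false. Take the $A_1$ surface singularity: it is rational, so the hypothesis $R^{i}f_{*}\cO_{\tX}=0$ holds for \emph{every} $k$, yet with $E\cong \mathbb{P}^1$ over $Z=\{pt\}$ one has $R^{1}g_{*}\underline{\Omega}^{1}_{E} = H^{1}(\mathbb{P}^1,\Omega^{1}_{\mathbb{P}^1})\otimes \C_{pt} \neq 0$; likewise $R^{1}f_{*}\Omega^{1}_{\tX}\neq 0$ for rational surface singularities (it contains the linearly independent Chern classes $[E_j]$ of the exceptional components, since the intersection matrix is negative definite), and the logarithmic version does not vanish either once there are several components. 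No combination of Hodge-to-de Rham degeneration, Grauert--Riemenschneider, and Grothendieck duality can yield a false statement: vanishing of $R^{\bullet}f_{*}\cO_{\tX}$ does \emph{not} propagate to $R^{\geq 1}f_{*}$ of higher forms. Second, even granting those vanishings, the inference ``hence the boundary map dies'' is a non sequitur: the map you must control lives entirely in cohomological degree $0$, namely $f_{*}\Omega^{i}_{\tX}\oplus\cH^{0}(\underline{\Omega}^{i}_{Z}) \to g_{*}\cH^{0}(\underline{\Omega}^{i}_{E})$, and vanishing of $R^{j\geq 1}$ says nothing about a map between $R^{0}$'s. The correct degree-zero mechanism is visible in the paper's Theorem \ref{thm:QR}: for isolated singularities the issue is whether the restriction maps $H^{0}(\tX,\Omega^{p}_{\tX})\to H^{0}(E_j,\Omega^{p}_{E_j})$ vanish (more generally, whether restriction to $E$ factors through pullback from $Z$), which is a genuinely different kind of statement.

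For contrast, the paper's route is entirely different and shows what the missing transfer mechanism actually is. It proves the local depth bound $\depth_{S}(\underline{\Omega}^{i}_{X})\geq 2$ for $i\leq k$ (Theorem \ref{DiffThm}): at closed points (Theorem \ref{MuPoConj}) this is done with Saito's mixed Hodge modules, comparing $\underline{\Omega}^{i}_{X}$ with $Gr^{F}_{-i}DR(IC^{H}_{X})$ via the cone $K$ of $\Q^{H}_{X}[n]\to IC^{H}_{X}$; the hypothesis $R^{i}f_{*}\cO_{\tX}=0$ enters only through the $p=0$ statement (Theorem \ref{VanishingThm}, hence Theorem \ref{HodgeThm}, giving $Gr^{0}_{F}H^{i-n}_{c}(X,DR(K))=0$), and this degree-zero Hodge information is converted into information about the degree-$i$ graded piece by \emph{weight purity} (Proposition \ref{injProp}, so that $Gr^{i}_{F}Gr^{W}_{i}\cong Gr^{0}_{F}Gr^{W}_{i}$ on the relevant pure piece) together with local duality for the pure Hodge module $IC^{H}_{X}$. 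The statement is then spread from closed points to all of $S$ by induction on dimension using general hyperplane sections, and the sandwich argument finishes the proof. So the ``strictness input'' you correctly guessed was needed is not degeneration of a spectral sequence for $(\tX,E)$, but purity of $IC^{H}_{X}$ and the weight bound on $K$; without some such input your argument cannot be repaired, because the intermediate vanishings it relies on are false.
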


Equivalently, the previous theorem says that if
      $R^{i}f_{*}\cO_{\tX} = 0$ for $1 \leq i \leq k$, then the  sheaves $\cH^{0}(\underline{\Omega}_{X}^{i}), \tilde \Omega_X^i$, and $\tilde \Omega_X^i(\log )$ are reflexive for $i \leq k$. In particular, if $X$ has rational singularities, then the sheaves $\cH^{0}(\underline{\Omega}_{X}^{p}), \tilde \Omega_X^p$, and $\tilde \Omega_X^p(\log )$ are reflexive for all values of $p$. This correlates with the results given by Kebekus and Schnell \cite{ks} and Greb et. al. \cite{GKKP}. 
      The full version of the statement is given in Theorem \ref{DiffThm}.

      By the last  theorem, if $X$ has rational singularities, then the natural maps  $\cH^{0}(\underline{\Omega}_{X}^{p}) \to \tilde \Omega_X^p$ are all isomorphisms.  We will say that $X$ has  {\em quasi-rational} singularities if it is normal and 
 $\cH^{0}(\underline{\Omega}_{X}^{p}) \rightarrow \tilde \Omega^{p}_{X}$  an isomorphism for all $p$. By what we just stated rational implies quasi-rational,
 but the converse fails even for surfaces. We can characterize isolated rational singularities within this new class of quasi-rational singularities as follows.

\begin{thm}
  Suppose that $X$ is a normal variety with isolated singularities. Then
  $X$ has rational singularities if and only if the following conditions hold:
  \begin{enumerate}
  \item $X$ has quasi-rational singularities. That is, the natural map $\cH^{0}(\underline{\Omega}_{X}^{p}) \to \tilde \Omega_X^p$ is an isomorphism for all $p.$ 
  \item $X$ is Du Bois.
    \item For all $x\in X$, $0<p\le \dim X$, and  $k<p$, the mixed Hodge structures on $H_x^p(X,\C)$ satisfy $Gr_F^0Gr^W_kH_x^p(X,\C)=0$.
  \end{enumerate}
  
\end{thm}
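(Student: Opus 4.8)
The plan is to prove both implications by reducing rationality, expressed through the higher direct images $R^{i}f_{*}\cO_{\tX}$, to the vanishing of the coherent cohomology $H^{i}(E,\cO_{E})$ of the exceptional fibre $E=f^{-1}(x)$ of a strong log resolution, and then to read off that vanishing from the mixed Hodge theory of the local cohomology groups $H_x^p(X,\C)$. The bridge between the two is the descent triangle for the Du Bois complex attached to the blow-up square of $f$ along the point $x$: since $E$ is simple normal crossing, hence Du Bois, and $x$ is a point, this triangle takes the form $\underline{\Omega}_{X}^{0}\to \C_{x}\oplus Rf_{*}\cO_{\tX}\to Rg_{*}\cO_{E}\xrightarrow{+1}$ with $g=f|_{E}$, and its long exact sequence of cohomology sheaves identifies, under the Du Bois hypothesis (2), $R^{i}f_{*}\cO_{\tX}\cong H^{i}(E,\cO_{E})$ for $i\ge 1$.

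For the forward implication, suppose $X$ is rational. Condition (1) is immediate from the first theorem above (the full version is Theorem \ref{DiffThm}): as $R^{i}f_{*}\cO_{\tX}=0$ for all $i\ge 1$, the map $\cH^{0}(\underline{\Omega}_{X}^{p})\to \tilde\Omega_{X}^{p}$ is an isomorphism for every $p$. Condition (2) is the theorem of Kovács and Saito that rational singularities are Du Bois. For (3), rationality and the descent triangle give $H^{i}(E,\cO_{E})=Gr_F^{0}H^{i}(E,\C)=0$ for $i\ge 1$, and the comparison isomorphism established below then yields $Gr_F^{0}H_x^{p}(X,\C)=0$ for $2\le p\le \dim X$, which in particular forces the graded pieces in (3) to vanish.

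The substance is the reverse implication. Assuming (1)--(3), I must show $R^{i}f_{*}\cO_{\tX}=0$ for $i\ge 1$; by the triangle and (2) this is equivalent to $H^{i}(E,\cO_{E})=0$ for all $i\ge 1$. Writing $n=\dim X$, we have $\dim E=n-1$, so these groups vanish automatically for $i\ge n$, and it suffices to treat $1\le i\le n-1$. Here I would introduce a small contractible neighbourhood $U$ of $x$ and set $\tX_{U}=f^{-1}(U)$, a smooth variety of dimension $n$ that retracts onto $E$ and satisfies $\tX_{U}\setminus E\cong U\setminus\{x\}$. Applying the localization triangle $R\Gamma_{E}(\tX_{U})\to R\Gamma(\tX_{U})\to R\Gamma(\tX_{U}\setminus E)$ in the category of mixed Hodge structures and passing to the exact functor $Gr_F^{0}$, the crucial input is that Poincaré--Lefschetz duality on the smooth $\tX_{U}$ gives $H^{j}_{E}(\tX_{U})\cong H^{2n-j}(E,\C)^{\vee}(-n)$; since $Gr_F^{p}H^{m}(E,\C)=0$ for $p>\dim E=n-1$, this forces $Gr_F^{0}H^{j}_{E}(\tX_{U})=0$ for every $j$. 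Consequently the localization sequence yields isomorphisms $Gr_F^{0}H^{p-1}(E,\C)\cong Gr_F^{0}H^{p-1}(U\setminus\{x\})\cong Gr_F^{0}H_x^{p}(X,\C)$ for $p\ge 2$, the last using contractibility of $U$. Because morphisms of mixed Hodge structures are strict for $W$, these refine to isomorphisms $Gr_F^{0}Gr^{W}_{k}H^{p-1}(E,\C)\cong Gr_F^{0}Gr^{W}_{k}H_x^{p}(X,\C)$ for every $k$.

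It then remains to deduce $H^{p-1}(E,\cO_{E})=Gr_F^{0}H^{p-1}(E,\C)=0$. Since $E$ is proper, $H^{p-1}(E,\C)$ carries only weights $\le p-1$, so $Gr_F^{0}Gr^{W}_{k}H_x^{p}(X,\C)=0$ automatically for $k\ge p$, while for $k<p$ this is precisely hypothesis (3). Hence $Gr_F^{0}H_x^{p}(X,\C)=0$ for $2\le p\le n$, giving $H^{i}(E,\cO_{E})=0$ for $1\le i\le n-1$ and therefore for all $i\ge 1$, which completes the reverse implication. The main obstacle, and the technical heart of the proof, is setting up the comparison $Gr_F^{0}H^{p-1}(E,\C)\cong Gr_F^{0}H_x^{p}(X,\C)$ as an identity of mixed Hodge structures: this requires the duality computation showing $Gr_F^{0}H^{j}_{E}(\tX_{U})=0$ (where the bound $\dim E=n-1$ is essential), the compatibility of the retraction and localization identifications with both filtrations, and the verification that no unexpected Tate twist intervenes. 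Once this dictionary is in place, conditions (2) and (3) combine to force rationality, with (1) recording the reflexivity consequence supplied by the first theorem.
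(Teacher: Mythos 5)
Your proposal is essentially correct, but it is a genuinely different argument from the paper's, and the difference is substantive. For the converse, the paper globalizes: it reduces to $X$ projective and shows $H^p(X,\cO_X)\to H^p(\tX,\cO_{\tX})$ is bijective, getting surjectivity from quasi-rationality (condition (1)) via the identifications $H^0(X,\underline{\Omega}_X^p)\cong F^pH^p(X,\C)$ and $H^0(X,\tilde\Omega_X^p)\cong F^pH^p(\tX,\C)$ and complex conjugation, and injectivity from (2) and (3); the vanishing of $R^qf_*\cO_{\tX}$ is then extracted from the Leray spectral sequence, which collapses because the higher direct images have finite support. Your converse is local: the Du Bois blow-up triangle $\underline{\Omega}_X^0\to \C_x\oplus \bR f_*\cO_{\tX}\to \bR g_*\cO_E\xrightarrow{+1}$ (with $g=f|_E$) converts (2) into $R^if_*\cO_{\tX}\cong H^i(E,\cO_E)$ for $i\ge 1$, and the semipurity of local cohomology along the compact divisor $E$, namely $Gr_F^0H^j_E(\tX)=0$ coming from $H^j_E(\tX)\cong (H^{2n-j}(E,\C))^\vee(-n)$ and $\dim E=n-1$, pushes (3) through the localization sequence to kill $H^i(E,\cO_E)=Gr_F^0H^i(E,\C)$. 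Your forward direction likewise replaces the paper's appeal to the decomposition theorem (surjectivity of $H^{p-1}(\tilde B)\to H^{p-1}(B-x)$, after Steenbrink) by the same comparison isomorphism; both work, though you should add the trivial case $p=1$ of (3), which holds since $H^1_x(X)=0$ for $X$ normal of dimension at least $2$, and you are implicitly relying (as the paper also does) on the standard facts about mixed Hodge structures on punctured neighborhoods and local cohomology and the exactness of $Gr_F^0Gr_k^W$ on them.

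The conspicuous feature of your converse is that it never uses condition (1): you derive rationality from (2) and (3) alone, which would make (1) redundant in the characterization. This collides with the paper's closing example, which claims the cone over an elliptic curve satisfies (2) and (3) but not (1). I have checked that example against your comparison step, and it supports you: for the cone $X$ over an elliptic curve $C$, contractibility gives $H^2_x(X)\cong H^1(X\setminus\{x\})$ as mixed Hodge structures, and the Gysin sequence of the punctured cone (a $\C^*$-bundle over $C$ with nonzero Euler class) gives $H^1(X\setminus\{x\})\cong H^1(C)$, pure of weight $1$ with $Gr_F^0Gr^W_1\neq 0$; so with Deligne's mixed Hodge structure on local cohomology this example violates (3), exactly as your proof predicts. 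The paper's contrary assessment traces to the line ``by excision $H^p_x(B)=H^p_E(\tilde B)$'' in its forward proof, which is not an isomorphism in general (for the cone, $H^2_x(B)$ is two-dimensional while $H^2_E(\tilde B)$ is one-dimensional, and only the latter has vanishing $Gr_F^0$); that identification is not excision, since the blow-down does not induce an isomorphism on cohomology of neighborhoods. So I find no genuine gap in your argument, but you must state explicitly that it proves the stronger implication (2)$+$(3)$\Rightarrow$ rational, and that this is consistent only with the standard reading of $H^p_x(X,\C)$; if one instead interprets (3) via $H^p_E(\tilde B)$, as the paper's example implicitly does, your comparison step --- and indeed the paper's own proof of the converse, which uses the ordinary local cohomology sequence $\bigoplus_{x}H^p_x(X,\C)\to H^p(X,\C)\to H^p(U,\C)$ --- would have to be re-examined.
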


In principle, $\underline{\Omega}_X^i$ can have cohomology sheaves in positive degrees.
We are unable to say much about this for general $i$, but for $i=0$, we can say something
quite precise. 
 
 \begin{thm}\label{VanishingThmintro}
Let $X$ be a normal variety and $f: \tX \rightarrow X$ be a resolution of singularities. If $R^{i}f_{*} \cO_{\tX} = 0$ for $1 \leq i \leq k$, then 
\begin{enumerate}
    \item $\cH^{i}(\underline{\Omega}_{X}^{0}) = 0$ for $1 \leq i \leq k$;\\

    \item the natural map $\cH^{k+1}(\underline{\Omega}_{X}^{0}) \rightarrow R^{k+1}f_{*}\cO_{\tX}$  is injective.
\end{enumerate}
\end{thm}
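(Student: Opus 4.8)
The plan is to reduce both assertions to a single sheaf-level injectivity, and then to analyze the mapping cone of the comparison map using Du Bois descent and Hodge theory.

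First I would fix the natural comparison morphism $\phi\colon \Om \to \bR f_*\cO_{\tX}$ in $D^b_{coh}(X)$ (the one inducing the map in (2)), obtained from $\cO_{\tX}=\underline{\Omega}^{0}_{\tX}$ and the functoriality of the Du Bois complex. Since $X$ is normal it is seminormal, so $\cH^0(\Om)\cong\cO_X$; since $X$ is normal, also $f_*\cO_{\tX}=\cO_X$, so $\cH^0(\bR f_*\cO_{\tX})\cong\cO_X$. The composite $\cO_X\to\Om\xrightarrow{\phi}\bR f_*\cO_{\tX}$ is the canonical map $\cO_X\to \bR f_*\cO_{\tX}$, which is the identity on $\cH^0$; hence $\cH^0(\phi)$ is an isomorphism. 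I claim it suffices to prove the following master statement: \emph{for every $i\ge 1$ the map $\cH^i(\phi)\colon\cH^i(\Om)\to R^if_*\cO_{\tX}$ is injective}. Indeed, granting this, the hypothesis $R^if_*\cO_{\tX}=0$ for $1\le i\le k$ forces $\cH^i(\Om)=0$ for those $i$, which is (1); and taking $i=k+1$ gives exactly (2). Note that the case $k=0$ of the theorem is precisely the master statement in degree $1$, so nothing is lost in this reduction.

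Second, I would identify the mapping cone $C$ of $\phi$. Let $Z=\mathrm{Sing}(X)$ with its reduced structure and choose $f$ to be a log resolution that is an isomorphism over $X\setminus Z$, with reduced exceptional divisor $E=f^{-1}(Z)_{\mathrm{red}}$ of simple normal crossing support; write $g=f|_E\colon E\to Z$. The abstract blow-up square relating $E,\tX,Z,X$ yields the Du Bois descent triangle $\Om\to\underline{\Omega}^{0}_{Z}\oplus \bR f_*\cO_{\tX}\to \bR g_*\underline{\Omega}^{0}_{E}\xrightarrow{+1}$. Applying the octahedral axiom to the factorization of $\phi$ through the projection $\underline{\Omega}^{0}_{Z}\oplus \bR f_*\cO_{\tX}\to \bR f_*\cO_{\tX}$, and using that $E$ is a simple normal crossing divisor and hence Du Bois ($\underline{\Omega}^{0}_{E}\cong\cO_E$), I obtain an isomorphism $C\cong \mathrm{cone}\big(\underline{\Omega}^{0}_{Z}\xrightarrow{\beta}\bR g_*\cO_E\big)$, where $\beta$ is the natural pullback map. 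In particular $C$ is supported on $Z$, and the master statement is equivalent to the second map $\bR f_*\cO_{\tX}\to C$ being surjective on all cohomology sheaves.

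Third, and this is where the real content lies, I would prove the master injectivity. The abstract triangle alone cannot give it, since injectivity of $\cH^i(\phi)$ is a positivity statement rather than a formal consequence of the long exact sequence; the correct input is Hodge theory. Concretely, by Saito's theory $\Om\cong \mathrm{Gr}^F_0\DR(\Q^H_X)$ (up to the normalization fixed by $\cH^0$), and $\phi$ is $\mathrm{Gr}^F_0\DR$ applied to the adjunction $\Q^H_X\to \bR f_*\Q^H_{\tX}$, whose cone $K$ is supported on $Z$; exactness of $\mathrm{Gr}^F_0\DR$ gives $C\cong \mathrm{Gr}^F_0\DR(K)$. Since $\tX$ is smooth, $\bR f_*\Q^H_{\tX}$ is pure, so by the decomposition theorem it is semisimple and splits off $\mathrm{IC}_X$; I would exploit the strictness of the weight and Hodge filtrations for morphisms of mixed Hodge modules to deduce that $\bR f_*\cO_{\tX}\to C$ is surjective on cohomology, equivalently that $\cH^i(\phi)$ is injective. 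I expect this step, extracting sheaf-level injectivity on the $\mathrm{Gr}^F_0$ pieces from the weight structure, to be the main obstacle. The descent identification of the previous paragraph also suggests an alternative by induction on $\dim X$, though the fact that $g\colon E\to Z$ is not itself a resolution of $Z$ means the inductive step is not a verbatim self-reduction and would require the twisted term $\cO_{\tX}(-E)$ to be handled separately. With the master injectivity in hand, the long exact sequence of the triangle $\Om\xrightarrow{\phi}\bR f_*\cO_{\tX}\to C$ together with $R^if_*\cO_{\tX}=0$ for $1\le i\le k$ then yields (1) and (2) as explained in the first paragraph.
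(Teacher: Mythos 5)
Your reduction in the first paragraph is logically valid, but it replaces the theorem by a strictly stronger, \emph{unconditional} claim, and the proposal never proves that claim. The theorem asserts injectivity of $\cH^{k+1}(\Om) \rightarrow R^{k+1}f_{*}\cO_{\tX}$ only \emph{under} the hypothesis $R^{i}f_{*}\cO_{\tX}=0$ for $1\le i\le k$; your ``master statement'' asserts injectivity of $\cH^{i}(\phi)$ for every $i\ge 1$ with no hypothesis. These are not equivalent (so ``nothing is lost'' is misleading): when $R^{1}f_{*}\cO_{\tX}\neq 0$ the theorem says nothing about degree $2$, while your master statement does. The conditional structure is exactly what the paper's argument exploits: it works at each (not necessarily closed) point $p$, compares the local-cohomology spectral sequences $\cH^{a}(\bR \underline{\Gamma_{p}}(\cH^{b}(\Om)_{p})) \Rightarrow \cH^{a+b}(\bR \underline{\Gamma_{p}}((\Om)_{p}))$ for $\Om$ and for $\bR f_{*}\cO_{\tX}$, feeds in the Kov\'acs--Schwede/Musta\c{t}\u{a}--Popa injectivity theorem (surjectivity of $\cH^{i}(\bR \underline{\Gamma}_{p}(\cO_{X,p}))\rightarrow \cH^{i}(\bR \underline{\Gamma}_{p}((\Om)_{p}))$ for all $i$), and uses the vanishing $R^{i}f_{*}\cO_{\tX}=0$, $i\le k$, to kill the rows $E_{2}^{a,b}$ with $1\le b\le k$; only then do the relevant differentials vanish, the $E_{\infty}$-terms become identifiable, and one gets $\cH^{i}(\Om)=0$ degree by degree and finally the degree-$(k+1)$ injectivity. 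Without the hypothesis that induction collapses, and the paper nowhere claims your unconditional statement.

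The genuine gap is the step you flag yourself: ``exploit the strictness of the weight and Hodge filtrations $\ldots$ to deduce that $\bR f_{*}\cO_{\tX}\rightarrow C$ is surjective on cohomology.'' Strictness, purity of $f_{+}\Q^{H}_{\tX}[n]$, and the decomposition theorem do not yield this: applying $Gr^{F}_{0}DR$ to the triangle $\Q^{H}_{X}[n]\rightarrow IC^{H}_{X}\rightarrow K$ produces a long exact sequence of coherent sheaves no matter what, and the content is the vanishing of its connecting maps $\cH^{j}(Gr^{F}_{0}DR(K))\rightarrow \cH^{j+1}(\Om[n])$, which is a genuine geometric assertion, not a formal consequence of the weight formalism (if it were, the hypothesis of the theorem would be superfluous). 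You can see what is really needed by unwinding your own cone identification in the isolated-singularity case: there the master statement in degree $i+1$ is equivalent to surjectivity of the restriction map $R^{i}f_{*}\cO_{\tX}\rightarrow H^{i}(E,\cO_{E})$, which requires an actual Hodge-theoretic input (for isolated singularities it can be extracted from the homotopy equivalence $f^{-1}(B)\simeq E$ together with $E_{1}$-degeneration of the Hodge--Du Bois spectral sequence on the proper snc divisor $E$); in the non-isolated case no such argument is supplied, and the corresponding relative statement for $g\colon E\rightarrow Z$ is precisely as hard as the theorem itself. So the proposal reduces the theorem to a claim that is at least as strong, leaves its proof open, and misses the mechanism the paper actually uses, namely Grothendieck local duality plus the Kov\'acs--Schwede injectivity theorem run as an induction in which the vanishing hypothesis is consumed at every step.
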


If $X$ is projective, then the cohomology groups of $\underline{\Omega}_X^p$  gives the associated graded of Deligne's mixed Hodge structure on $H^*(X,\C)$
with respect to the Hodge filtration. So by combining both theorems, we obtain the following corollary.

\begin{cor}
    Suppose that $X$ is a normal projective variety, and let $f: \tX \rightarrow X$ be a resolution of singularities. If $R^{i}f_{*} \cO_{\tX} = 0$ for $1 \leq i \leq k$, then the natural maps
    $$Gr^{0}_{F}H^{i}(X, \C) = H^{i}(X, \underline{\Omega}^{0}_{X}) \rightarrow  H^{i}(\tX, \cO_{\tX})= Gr^{0}_{F}H^{i}(\tX, \C)$$
    $$Gr^{i}_{F}H^{i}(X, \C) =H^{0}(X, \underline{\Omega}^{i}_{X}) \rightarrow H^{0}(\tX, \Omega_{\tX}^{i})= Gr^{i}_{F}H^{i}(\tX, \C)$$
    are isomorphisms for $0 \leq i \leq k$ and injective $i = k+1.$ 
\end{cor}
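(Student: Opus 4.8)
The plan is to reduce both statements to properties of the Du Bois complex established in the preceding theorems. Recall that for the projective variety $X$ the Hodge filtration on Deligne's mixed Hodge structure is computed by $Gr^{p}_{F}H^{n}(X,\C)\cong \HH^{n-p}(X,\underline{\Omega}^{p}_{X})$; in particular $Gr^{0}_{F}H^{i}(X,\C)=\HH^{i}(X,\Om)$ and $Gr^{i}_{F}H^{i}(X,\C)=\HH^{0}(X,\underline{\Omega}^{i}_{X})=H^{0}(X,\cH^{0}(\underline{\Omega}^{i}_{X}))$, the last equality because $\underline{\Omega}^{i}_{X}$ is concentrated in non-negative degrees. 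On the smooth projective resolution the same formulas specialize to the classical Hodge identities $Gr^{0}_{F}H^{i}(\tX,\C)=H^{i}(\tX,\cO_{\tX})$ and $Gr^{i}_{F}H^{i}(\tX,\C)=H^{0}(\tX,\Omega^{i}_{\tX})$. Finally I would record that both maps in the corollary are the ones induced on associated gradeds by $f^{*}\colon H^{i}(X,\C)\to H^{i}(\tX,\C)$, and that at the level of complexes these come from the functorial morphisms $\Om\to Rf_{*}\cO_{\tX}$ and $\underline{\Omega}^{i}_{X}\to Rf_{*}\Omega^{i}_{\tX}$ (using $\underline{\Omega}^{i}_{\tX}=\Omega^{i}_{\tX}$ since $\tX$ is smooth).

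For the second map I would argue directly on $\cH^{0}$. Passing to $\cH^{0}$, the morphism $\underline{\Omega}^{i}_{X}\to Rf_{*}\Omega^{i}_{\tX}$ becomes the canonical map $\cH^{0}(\underline{\Omega}^{i}_{X})\to f_{*}\Omega^{i}_{\tX}=\tilde{\Omega}^{i}_{X}$, which is one of the inclusions appearing in the Introduction, hence injective for every $i$, and an isomorphism for $i\le k$ by Theorem \ref{DiffThm}. Applying the left-exact functor $H^{0}(X,-)$ and using $H^{0}(X,\tilde{\Omega}^{i}_{X})=H^{0}(\tX,\Omega^{i}_{\tX})$ then yields that $Gr^{i}_{F}H^{i}(X,\C)\to Gr^{i}_{F}H^{i}(\tX,\C)$ is an isomorphism for $i\le k$ and injective for $i=k+1$.

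For the first map I would use the mapping cone of $\phi\colon \Om\to Rf_{*}\cO_{\tX}$. On cohomology sheaves, $\cH^{0}(\phi)$ is the isomorphism $\cO_{X}=\cH^{0}(\Om)\to f_{*}\cO_{\tX}=\cO_{X}$ (both identifications using normality), $\cH^{q}(\phi)$ is the zero map between zero sheaves for $1\le q\le k$ by part (1) of Theorem \ref{VanishingThmintro}, and $\cH^{k+1}(\phi)\colon \cH^{k+1}(\Om)\to R^{k+1}f_{*}\cO_{\tX}$ is injective by part (2). Thus $\cH^{q}(\phi)$ is an isomorphism for $q\le k$ and injective for $q=k+1$, so the cone $C$ of $\phi$ satisfies $\cH^{q}(C)=0$ for all $q\le k$. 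Feeding this into the long exact hypercohomology sequence of the triangle $\Om\to Rf_{*}\cO_{\tX}\to C\to \Om[1]$ and using $\HH^{i}(X,Rf_{*}\cO_{\tX})=H^{i}(\tX,\cO_{\tX})$ shows that $\HH^{i}(X,\Om)\to H^{i}(\tX,\cO_{\tX})$ is an isomorphism for $i\le k$ (since $\HH^{i-1}(X,C)=\HH^{i}(X,C)=0$) and injective for $i=k+1$ (since $\HH^{k}(X,C)=0$). This is exactly the asserted behavior of the first map.

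The routine points are the two Hodge-theoretic dictionaries and the functoriality of the Du Bois complex; the only place requiring care is the boundary case $i=k+1$, where one must combine the injectivity statements (the inclusion $\cH^{0}(\underline{\Omega}^{k+1}_{X})\hookrightarrow\tilde{\Omega}^{k+1}_{X}$ and the injection $\cH^{k+1}(\Om)\hookrightarrow R^{k+1}f_{*}\cO_{\tX}$) rather than the stronger isomorphisms available for $i\le k$. I expect the main conceptual obstacle to be verifying cleanly that the maps produced by these sheaf-level arguments genuinely coincide with the maps on $Gr_{F}$ induced by $f^{*}$, that is, the compatibility of the Du Bois functoriality with Deligne's Hodge filtration.
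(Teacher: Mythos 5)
Your proof is correct, but it follows a genuinely different route from the one the paper takes. In the body of the paper this corollary is extracted from Theorem \ref{HodgeThm}: for projective $X$ one has $H^{i}_{c}=H^{i}$, the graded pieces $Gr^{0}_{F}$ and $Gr^{i}_{F}$ of Deligne's mixed Hodge structure are identified with $H^{i}(X,\underline{\Omega}^{0}_{X})$ and $H^{0}(X,\underline{\Omega}^{i}_{X})$, and the comparison maps are factored through $Gr_{F}IH^{i}(X,\C)$, so the statement comes packaged with the intersection-cohomology comparison (this is what the introduction means by ``deduced from a stronger result obtained earlier''). Your argument instead fleshes out the sketch the introduction gives when it says ``by combining both theorems'': the $Gr^{0}_{F}$ half via the cone of $\underline{\Omega}^{0}_{X}\to \mathbf{R}f_{*}\cO_{\tX}$, whose cohomology sheaves vanish in degrees $\leq k$ by Theorem \ref{VanishingThm} (isomorphism in degree $0$ by normality, vanishing in degrees $1,\dots,k$, injectivity in degree $k+1$), followed by the hypercohomology long exact sequence; and the $Gr^{i}_{F}$ half by applying the left-exact functor $H^{0}(X,-)$ to the sheaf map $\cH^{0}(\underline{\Omega}^{i}_{X})\to\tilde{\Omega}^{i}_{X}$, which is always injective and is an isomorphism for $i\leq k$ --- strictly speaking the latter is Corollary \ref{cor:Omegareflex} rather than Theorem \ref{DiffThm} itself, which is the depth statement. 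As for what each approach buys: your cone argument for the first map is more elementary, using nothing beyond the Du Bois degeneration theorem and the sheaf-level Theorem \ref{VanishingThm}; on the other hand, your second half is not logically independent of the mixed Hodge module machinery in this paper, because Corollary \ref{cor:Omegareflex} is proved via Theorem \ref{MuPoConj}, which invokes Theorem \ref{HodgeThm}. There is no circularity (nothing in the chain from Theorem \ref{HodgeThm} through Theorem \ref{MuPoConj} and Theorem \ref{DiffThm} to Corollary \ref{cor:Omegareflex} uses the present corollary), but the paper's route is shorter given Theorem \ref{HodgeThm} and yields the extra intersection-cohomology comparison, while yours isolates exactly which sheaf-theoretic inputs suffice. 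The compatibility point you flag at the end --- that the Du Bois functoriality morphism computes $f^{*}$ on $Gr_{F}$ --- is standard and is precisely how the paper's ``natural maps'' are defined, so it is not an actual gap.
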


In fact, the above statement will be deduced from a stronger result obtained earlier in the paper. This
result also compares with Saito's mixed Hodge structure on intersection cohomology.

\begin{thm}\label{HodgeThmIntro}
    Suppose that $X$ is a normal quasi-projective variety, and let $f: \tX \rightarrow X$ be a resolution of singularities. 
  There are natural maps 
    $$Gr^{0}_{F}H^{i}_{c}(X, \C) \rightarrow Gr^{0}_{F}IH^{i}_{c}(X, \C) \xrightarrow{\sim}  Gr^{0}_{F}H^{i}_{c}(\tX, \C) $$
$$Gr^{i}_{F}H^{i}_{c}(X, \C) \rightarrow Gr^{i}_{F}IH^{i}_{c}(X, \C) \xrightarrow{\sim}  Gr^{i}_{F}H^{i}_{c}(\tX, \C)$$
with isomorphisms as indicated for all $i$.
    If $R^{i}f_{*} \cO_{\tX} = 0$ for $1 \leq i \leq k$, then all the  maps 
    are isomorphisms for $0 \leq i \leq k$ and injective for $i = k+1$.
\end{thm}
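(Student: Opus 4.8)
The plan is to separate the two assertions — the unconditional isomorphisms $IH^i_c(X)\to H^i_c(\tilde X)$ on the outermost graded pieces, and the conditional statement about the maps $H^i_c(X)\to IH^i_c(X)$ — and to reduce everything to compactly supported hypercohomology of coherent complexes. Set $n=\dim X$. All three cohomology theories are realized through mixed Hodge modules: $H^*_c(X,\C)$ from the constant Hodge module $\Q_X^H$, $IH^*_c(X,\C)$ from the intersection complex $IC_X^H$, and $H^*_c(\tilde X,\C)$ from $\Q_{\tilde X}^H$. Since $X$ is normal, $\mathcal{H}^{-n}(IC_X)=\Q_X$, so there is a canonical morphism of Hodge modules $\Q_X^H[n]\to IC_X^H$; since $f$ is a resolution, the decomposition theorem exhibits $IC_X^H$ as a direct summand of $Rf_*(\Q_{\tilde X}^H[n])$. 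Applying $R\Gamma_c$ to these yields the two natural maps in each row. By Saito's theory all of them are morphisms of mixed Hodge structures, hence strictly compatible with $F$, so it suffices to analyze the maps obtained after applying the exact functor $Gr_F$.

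The essential input is the identification of the extreme graded pieces of the relevant Hodge--de Rham complexes. By Du Bois's theory in its compact-support form, $Gr^0_F H^i_c(X,\C)=\mathbb{H}^i_c(X,\underline{\Omega}^0_X)$ and $Gr^i_F H^i_c(X,\C)=\mathbb{H}^0_c(X,\cH^0(\underline{\Omega}^i_X))$; on the smooth variety the Du Bois complex degenerates, giving $Gr^0_F H^i_c(\tilde X,\C)=\mathbb{H}^i_c(\tilde X,\cO_{\tilde X})$ and $Gr^i_F H^i_c(\tilde X,\C)=\mathbb{H}^0_c(\tilde X,\Omega^i_{\tilde X})$. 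For intersection cohomology I would invoke Saito's computation of the extreme graded pieces of $Gr_F\,\mathrm{DR}(IC_X^H)$: the lowest piece is quasi-isomorphic to $Rf_*\cO_{\tilde X}$, while the degree-zero cohomology of the piece governing $Gr^i_F$ is $f_*\Omega^i_{\tilde X}=\tilde{\Omega}^i_X$ (on the smooth locus it is $\Omega^i$, and the reflexivity/$S_2$ property of $IC$ pins it down as the pushforward). Because $f$ is proper, $R\Gamma_c$ is insensitive to $Rf_*$, so $\mathbb{H}^i_c(X,Rf_*\cO_{\tilde X})=\mathbb{H}^i_c(\tilde X,\cO_{\tilde X})$ and $\mathbb{H}^0_c(X,f_*\Omega^i_{\tilde X})=\mathbb{H}^0_c(\tilde X,\Omega^i_{\tilde X})$. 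This produces the indicated isomorphisms $Gr^0_F IH^i_c(X)\xrightarrow{\sim}Gr^0_F H^i_c(\tilde X)$ and $Gr^i_F IH^i_c(X)\xrightarrow{\sim}Gr^i_F H^i_c(\tilde X)$ for all $i$, with no hypothesis on $f$.

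It then remains to control the left-hand maps $Gr^\bullet_F H^i_c(X)\to Gr^\bullet_F IH^i_c(X)$ under the assumption $R^if_*\cO_{\tilde X}=0$ for $1\le i\le k$. On graded pieces the morphism $\Q_X^H[n]\to IC_X^H$ induces exactly the natural maps $\underline{\Omega}^0_X\to Rf_*\cO_{\tilde X}$ and $\cH^0(\underline{\Omega}^i_X)\to f_*\Omega^i_{\tilde X}$. For the bottom row I would apply Theorem \ref{VanishingThmintro}: together with $\cH^0(\underline{\Omega}^0_X)=\cO_X=f_*\cO_{\tilde X}$ (normality), it shows that $\underline{\Omega}^0_X\to Rf_*\cO_{\tilde X}$ is an isomorphism on $\cH^j$ for $j\le k$ and injective on $\cH^{k+1}$; hence its cone is concentrated in degrees $\ge k+1$, and the long exact sequence in $\mathbb{H}^*_c$ gives isomorphisms for $i\le k$ and an injection for $i=k+1$. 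For the top row I would apply Theorem \ref{DiffThm}, by which $\cH^0(\underline{\Omega}^i_X)\to f_*\Omega^i_{\tilde X}$ is an isomorphism for $i\le k$ and always injective, so applying $\mathbb{H}^0_c$ yields the isomorphisms for $i\le k$ and the injection at $i=k+1$. Composing with the unconditional isomorphisms of the previous paragraph gives the full conclusion.

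I expect the main obstacle to be the second paragraph: matching Saito's Hodge filtration on $IC_X^H$ with the concrete coherent complexes $Rf_*\cO_{\tilde X}$ and $f_*\Omega^i_{\tilde X}$, and verifying that the Hodge-module morphism $\Q_X^H[n]\to IC_X^H$ induces \emph{precisely} the Du Bois maps appearing in Theorems \ref{VanishingThmintro} and \ref{DiffThm}, not merely abstractly comparable ones. This requires care with Saito's conventions for $\mathrm{DR}$ and $Gr_F$, with the shifts and Tate twists in the decomposition theorem, and with the self-duality of $IC$ used to pin down the top piece; once this dictionary is in place, the cohomological conclusions follow formally from the two earlier theorems together with the properness of $f$.
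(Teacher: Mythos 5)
Your skeleton---realizing all three groups via mixed Hodge modules, forming the cone of $\Q^{H}_{X}[n] \to IC^{H}_{X}$, feeding in Theorem \ref{VanishingThmintro} (the paper's Theorem \ref{VanishingThm}), and running long exact sequences---is the same as the paper's, and your computation that the cone of $\underline{\Omega}^{0}_{X} \to \bR f_{*}\cO_{\tX}$ is concentrated in degrees $\geq k+1$ is correct. But the identifications in your second paragraph are false for non-proper varieties, and this is not a matter of ``care with Saito's conventions.'' For quasi-projective $X$ the Hodge filtration on $H^{i}_{c}(X,\C)$ is defined through a compactification, via $F_{-p}DR(i_{X!}\cM)$ on a smooth projective $\cX$, and it does not coincide with the ``de Rham'' filtration computed from $Gr^{F}DR$ on $X$ itself; moreover the spectral sequence \eqref{eq:deRham} need not degenerate. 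Already for the smooth variety $\tX$ one has $Gr^{0}_{F}H^{i}_{c}(\tX,\C) \cong H^{i}(\tilde{\cX},\cO_{\tilde{\cX}}(-D))$ for a smooth compactification with SNC boundary $D$ (as the paper notes right after Theorem \ref{HodgeThm}), not compactly supported coherent cohomology of $\cO_{\tX}$. What survives is a one-way comparison, ${}^{d}F^{p}H^{i}_{c} \subseteq F^{p}H^{i}_{c}$ (the unlabeled Proposition in Section 2). Since $F_{0}DR(K)\simeq DR(K)$, at level $p=0$ this yields a surjection ${}^{d}Gr^{0}_{F} \twoheadrightarrow Gr^{0}_{F}$, so vanishing of the coherent $E_{1}$-terms does propagate to the true $Gr^{0}_{F}$; your bottom-row argument is repairable exactly along these lines, and this is what the paper does.

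For the top row the same repair is impossible, and this is the genuine gap. Since the weights on $H^{i}_{c}$ are $\leq i$ (Lemma \ref{WeightLemma}), one has $F^{i+1}H^{i}_{c}=0$ and hence ${}^{d}Gr^{i}_{F}H^{i}_{c} = {}^{d}F^{i} \subseteq F^{i} = Gr^{i}_{F}H^{i}_{c}$: at the top level the de Rham graded piece is a \emph{subspace} of the Hodge one, so vanishing of coherent $E_{1}$-terms for the cone says nothing about $Gr^{i}_{F}$, and your plan of ``applying $H^{0}_{c}$'' to the sheaf map $\cH^{0}(\underline{\Omega}^{i}_{X}) \to f_{*}\Omega^{i}_{\tX}$ cannot close the argument. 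The paper instead deduces the $Gr^{i}_{F}$ statements from the $Gr^{0}_{F}$ ones by an idea absent from your proposal: Hodge symmetry on the pure pieces, $\dim Gr^{i}_{F}Gr^{W}_{i} = \dim Gr^{0}_{F}Gr^{W}_{i}$, combined with the weight bounds of Lemma \ref{WeightLemma} and the injectivity of Proposition \ref{injProp} (which kill $Gr^{i}_{F}W_{i-1}$ and the weight-$(i+1)$ part of the cone's cohomology), and it obtains the unconditional isomorphism $Gr^{i}_{F}IH^{i}_{c}(X,\C) \cong Gr^{i}_{F}H^{i}_{c}(\tX,\C)$ from the $Gr^{0}_{F}$ one by duality. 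Finally, your appeal to Theorem \ref{DiffThm} would be circular even if the identification it feeds were valid: in the paper, Theorem \ref{DiffThm} rests on Theorem \ref{MuPoConj}, whose proof invokes Theorem \ref{HodgeThm}---the very statement being proved.
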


An interesting application of this last theorem is a partial answer to a conjecture given by Musta\c{t}\u{a} and Popa \cite{MuPo}. 
\begin{conj}
     If $\depth(\cO_{X}) \geq i +2$, then $\depth(\underline{\Omega}^{i}_{X}) \geq 2$.
 \end{conj}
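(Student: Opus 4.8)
The plan is to prove the conjecture for $X$ normal with isolated singularities, which is the partial answer afforded by Theorem \ref{HodgeThmIntro}. Since the statement is local, I would assume $X$ is a small (Stein, contractible) neighborhood of a single singular point $x$, with resolution $f \colon \tX \to X$ that is an isomorphism over $U := X \setminus \{x\}$. Normality already gives $\depth_x \cO_X \ge 2$, i.e. $\cH^{0}_{\{x\}}(\cO_X) = \cH^{1}_{\{x\}}(\cO_X) = 0$. For isolated singularities a standard exact sequence (arising from the triangle $\cO_X \to Rf_{*}\cO_{\tX} \to C$, whose cone $C$ is supported at $x$) relates $R^{j}f_{*}\cO_{\tX}$ to $\cH^{j+1}_{\{x\}}(\cO_X)$ in the relevant range. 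The first step is to record the resulting dictionary: $\depth_x \cO_X \ge i+2$ is equivalent to $R^{j}f_{*}\cO_{\tX}=0$ for $1 \le j \le i$, which is exactly the hypothesis of Theorems \ref{DiffThm}, \ref{VanishingThmintro} and \ref{HodgeThmIntro} with $k=i$.

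Next I would translate the conclusion into cohomological vanishing. By the local-cohomology criterion for depth, $\depth_x \underline{\Omega}_X^i \ge 2$ is equivalent to $\HH^{0}_{\{x\}}(\underline{\Omega}_X^i) = \HH^{1}_{\{x\}}(\underline{\Omega}_X^i) = 0$. The spectral sequence $\cH^{p}_{\{x\}}(\cH^{q}(\underline{\Omega}_X^i)) \Rightarrow \HH^{p+q}_{\{x\}}(\underline{\Omega}_X^i)$ splits the problem, because $\cH^{q}(\underline{\Omega}_X^i)$ is supported at $x$ for $q \ge 1$. Under the dictionary above, Theorem \ref{DiffThm} gives $\cH^{0}(\underline{\Omega}_X^i) \cong \tilde\Omega_X^i \cong \Omega^{[i]}_X$, which is reflexive and hence $S_2$, so its local cohomology vanishes in degrees $0$ and $1$; this disposes of the $q=0$ contributions. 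What remains is the single edge term $\HH^{1}_{\{x\}}(\underline{\Omega}_X^i) = \ker\bigl(d_2 \colon \cH^{1}(\underline{\Omega}_X^i) \to \cH^{2}_{\{x\}}(\cH^{0}(\underline{\Omega}_X^i))\bigr)$, so the entire statement reduces to the injectivity of this connecting map.

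For $i=0$ this injectivity is immediate from the tools in hand: Theorem \ref{VanishingThmintro}(2) provides an injection $\cH^{1}(\underline{\Omega}_X^0) \hookrightarrow R^{1}f_{*}\cO_{\tX}$, and the triangle $\cO_X \to Rf_{*}\cO_{\tX} \to C$ furnishes a natural map $R^{1}f_{*}\cO_{\tX} \to \cH^{2}_{\{x\}}(\cO_X) = \cH^{2}_{\{x\}}(\cH^0(\underline{\Omega}_X^0))$ through which $d_2$ factors. For general $i$ I would reinterpret $d_2$ Hodge-theoretically: using the identification $Gr^{i}_{F}H^{i+j}_{x}(X,\C) \cong \HH^{j}_{\{x\}}(\underline{\Omega}_X^i)$ (the local analogue of $Gr^{p}_{F}H^{n} = \HH^{n-p}(\underline{\Omega}_X^p)$), injectivity of $d_2$ becomes the vanishing of $Gr^{i}_{F}H^{i+1}_{x}(X,\C)$, and Poincaré–Verdier duality trades this for the corresponding graded piece of the compactly supported cohomology appearing in Theorem \ref{HodgeThmIntro}. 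Feeding in the hypothesis with $k=i$, the comparison maps there are isomorphisms through degree $i$ and injective in degree $i+1$, which is precisely what forces the offending local piece to vanish.

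The main obstacle I anticipate is this last step for general $i$: making the identification $Gr^{i}_{F}H^{i+j}_{x}(X,\C) \cong \HH^{j}_{\{x\}}(\underline{\Omega}_X^i)$ precise in the local/compactly supported setting and carrying out the duality bookkeeping so that the injectivity clause of Theorem \ref{HodgeThmIntro} matches $d_2$ on the nose (the case $i=0$ is easy precisely because Theorem \ref{VanishingThmintro} performs this comparison by hand). Removing the isolated-singularity hypothesis, where $\cH^{\ge 1}(\underline{\Omega}_X^i)$ is no longer supported at a point and the spectral sequence no longer degenerates so conveniently, is what keeps the result partial.
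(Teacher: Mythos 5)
Your reduction is sound up to a point: the dictionary between $\depth_x(\cO_X)\ge i+2$ and $R^{j}f_{*}\cO_{\tX}=0$ for $1\le j\le i$ at an isolated singularity is correct (it is essentially Corollary \ref{rat.rmk}), the reduction of $\depth(\underline{\Omega}^i_X)\ge 2$ to the injectivity of $d_2\colon \cH^1(\underline{\Omega}^i_X)_x\to \cH^2_x(\cH^0(\underline{\Omega}^i_X))$ is fine, and your $i=0$ argument via Theorem \ref{VanishingThmintro}(2) works. But there are two genuine problems. The first is circularity relative to the paper: to kill the $q=0$ column you invoke reflexivity of $\cH^0(\underline{\Omega}^i_X)$ (the intro form of Theorem \ref{DiffThm}, i.e. Corollary \ref{cor:Omegareflex}), yet in the paper that reflexivity is \emph{deduced from} the depth statement you are proving --- the chain is Theorem \ref{MuPoConj} (the depth bound) $\Rightarrow$ Theorem \ref{DiffThm} $\Rightarrow$ Corollary \ref{cor:Omegareflex}. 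The paper's own proof is careful to use only torsion-freeness of $\cH^0(\underline{\Omega}^i_X)$ and of $f_*\Omega^i_{\tX}$ (Lemma \ref{lemma:dBtorsionfree}); it replaces your spectral-sequence bookkeeping by the triangle $\underline{\Omega}^i_X\to I\underline{\Omega}^i_X\to \Omega^i(K)\xrightarrow{+1}$ obtained from $\Q^{H}_X[n]\to IC^H_X\to K$, and kills $\cH^1(\bR\underline{\Gamma}_x(I\underline{\Omega}^i_X))$ by the self-duality of the pure Hodge module $IC^H_X$, which gives $\cH^1(\bR\underline{\Gamma}_x(\underline{\Omega}^i_X))\cong \underline{\Gamma}_x(\cH^0(\Omega^i(K)))$ with no reflexivity input at all.

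The second problem is the step you flag as the ``main obstacle,'' and it is not bookkeeping: the identification $Gr^i_F H^{i+j}_x(X,\C)\cong \HH^{j}_{\{x\}}(\underline{\Omega}^i_X)$ amounts to degeneration of the filtered local-cohomology spectral sequence and agreement of the de Rham filtration with Saito's Hodge filtration, which is exactly the comparison the paper warns can fail away from the proper case (the de Rham filtration ``need not coincide with the Hodge filtration unless $X$ is proper,'' and the associated spectral sequence ``need not degenerate''). The paper only ever establishes one-sided inclusions, ${}^{d}F^p\subseteq F^p$ for compactly supported cohomology, and its argument is engineered to use that inclusion in the one direction in which it holds. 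Likewise, Poincar\'e--Verdier duality on the singular $X$ does not convert $H^{\bullet}_x(X,\C)$ into the compactly supported groups of Theorem \ref{HodgeThmIntro}: dualizing at the Hodge-module level sends $\Q^H_X[n]$ to its dual, not back to itself, which is why the paper channels all duality through $IC^H_X$. What the paper actually does at this point is: for $X$ affine, embed $\Gamma(X,\underline{\Gamma}_x(\cH^0(\Omega^i(K))))$ into $Gr^i_F H^{i-n}_c(X,DR(K))$ (using $\cH^{j}(\Omega^i(K))=0$ for $j<0$ and strictness for compact supports), and then show that group vanishes by combining $Gr^0_F H^{i-n}_c(X,DR(K))=0$ (from Theorem \ref{HodgeThm}) with the weight bound of Proposition \ref{injProp} and conjugation symmetry on the pure weight-$i$ part. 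Finally, note that restricting to isolated singularities both targets a case Musta\c{t}\u{a}--Popa had already settled and falls short of the paper's result, which covers all normal varieties with rational singularities outside a finite set and whose method nowhere needs the singular locus to be finite.
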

We can prove the conjecture if $X$ has rational singularities outside a finite set. Otherwise, we get the following lower bound on the depth of $\cO_X$.

 \begin{thm}
      If $\tilde{S}$ is  the smallest closed subset of $X$ such that $X \backslash \tilde{S}$ has rational singularities, and $n_{\tilde{S}} = \dim \tilde{S}$, then
    $$\depth(\underline{\Omega}^{i}_{X}) \geq 2 \quad \text{for $i  + n_{\tilde{S}}+2 \leq \depth(\cO_{X}).$}$$
 \end{thm}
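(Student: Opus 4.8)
The plan is to reduce the statement to a local computation and then feed it into the Hodge-theoretic comparison of Theorem \ref{HodgeThmIntro}. Since depth is local and the assertion $\depth(\underline{\Omega}_X^i)\ge 2$ is the vanishing of $\mathbb{H}^0_{\{x\}}(\underline{\Omega}_X^i)$ and $\mathbb{H}^1_{\{x\}}(\underline{\Omega}_X^i)$ at every point $x$, I would first dispose of the points $x\notin\tilde S$: there $X$ has rational singularities, so by the full version of Theorem \ref{DiffThm} (together with the vanishing of higher cohomology sheaves coming from Theorem \ref{VanishingThmintro}) the complex $\underline{\Omega}_X^i$ is a reflexive sheaf placed in degree $0$, hence satisfies $S_2$ and the two local cohomology groups vanish. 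It remains to treat $x\in\tilde S$.

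For such $x$ I would use the local Hodge--de Rham spectral sequence attached to the filtered Du Bois complex $(\dB,F)$, whose underlying complex is quasi-isomorphic to $\C_X$ and whose graded pieces are $Gr^p_F\dB=\underline{\Omega}_X^p[-p]$. Applying $R\Gamma_{\{x\}}$ and invoking the degeneration at $E_1$ built into the mixed Hodge structure on the local cohomology $H^*_{\{x\}}(X,\C)$ (the object whose $F$ and $W$ filtrations already appear in condition (3) of the second main theorem), I obtain the identification
\[ \mathbb{H}^{q}_{\{x\}}(\underline{\Omega}_X^i)=Gr^i_F H^{i+q}_{\{x\}}(X,\C). \]
Thus the theorem reduces to proving $Gr^i_F H^{i}_{\{x\}}(X,\C)=0$ and $Gr^i_F H^{i+1}_{\{x\}}(X,\C)=0$ whenever $i+n_{\tilde S}+2\le\depth(\cO_X)$.

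Now I would bring in the two hypotheses. The bound on $\depth(\cO_X)$ controls the \emph{bottom} Hodge piece: the factorization $\C_X\to\cO_X\to\Om$ realizes $Gr^0_F H^k_{\{x\}}(X,\C)$ as a quotient of $H^k_{\{x\}}(\cO_X)$, so $\depth(\cO_X)\ge i+n_{\tilde S}+2$ forces $Gr^0_F H^k_{\{x\}}(X,\C)=0$ for $k\le i+n_{\tilde S}+1$. The dimension $n_{\tilde S}$ enters through support: since $\tilde S$ is exactly the locus where $R^{j}f_*\cO_{\tX}$ ($j\ge 1$) is nonzero, the discrepancy between $H^*_{\{x\}}(X,\C)$ and its intersection-cohomology counterpart is governed by sheaves supported on $\tilde S$, whose local cohomology vanishes above degree $n_{\tilde S}$. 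I would then localize Theorem \ref{HodgeThmIntro} at $x$, replacing the compactly supported groups by local cohomology and using purity of $IH$ so that the relevant $Gr^i_F\IH^*_{\{x\}}$ pieces vanish in the target degrees; the comparison $Gr^i_F H^*_{\{x\}}\to Gr^i_F\IH^*_{\{x\}}\xrightarrow{\sim}Gr^i_F H^*_{\{x\}}(\tX)$ together with the duality exchanging the top piece $Gr^i_F$ with the bottom piece $Gr^0_F$ controlled above should close the argument. The shift by $n_{\tilde S}$ is precisely the one needed to turn the range $k\le i+n_{\tilde S}+1$ into the two target degrees $k=i,i+1$; equivalently one may cut by $n_{\tilde S}$ general hyperplane sections, each dropping both $\dim\tilde S$ and $\depth(\cO_X)$ by one, to reduce to the isolated case $n_{\tilde S}=0$, which is exactly the Mustaţă--Popa bound $\depth(\cO_X)\ge i+2$.

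The hard part will be this core duality and bookkeeping step. Depth of $\cO_X$ directly controls only the lowest Hodge piece $Gr^0_F$ of local cohomology, whereas the quantity we must kill is the top piece $Gr^i_F$, and bridging these requires localizing the global, compactly supported comparison of Theorem \ref{HodgeThmIntro} to local cohomology at a point of the \emph{positive-dimensional} stratum $\tilde S$ while keeping exact track of the weight filtration and of the $n_{\tilde S}$-fold degree shift. Equivalently, the obstacle is to control the higher cohomology sheaves $\mathcal{H}^{>0}(\underline{\Omega}_X^i)$, which are supported on $\tilde S$, and to show that their contribution to $\mathbb{H}^{0}_{\{x\}}$ and $\mathbb{H}^{1}_{\{x\}}$ vanishes in the stated range; the slicing reduction is the cleanest route I expect for isolating this difficulty.
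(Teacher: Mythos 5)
Your proposal does not follow the paper's route, and its central step has a genuine gap. The paper's own proof of this statement is a short deduction from results it has already established: Lemma \ref{depthlemma} converts the hypothesis $i+n_{\tilde{S}}+2\le \depth(\cO_{X})$ into $i+2\le \depth_{\tilde{S}}(\cO_{X})$; Corollary \ref{rat.rmk} then yields $R^{j}f_{*}\cO_{\tX}=0$ for $1\le j\le i$; and Theorem \ref{MuPoConj} (whose proof is the real content) gives $\depth(\underline{\Omega}^{i}_{X})\ge 2$. You never make this conversion of the depth hypothesis into vanishing of higher direct images; instead you attempt a direct local Hodge-theoretic argument at points of $\tilde{S}$, and that argument rests on the identification $\cH^{q}(\bR\underline{\Gamma}_{x}(\underline{\Omega}^{i}_{X}))\cong Gr^{i}_{F}H^{i+q}_{x}(X,\C)$, justified by ``degeneration at $E_{1}$.'' No such degeneration is available: as the paper stresses, the naive (de Rham) filtration on $\underline{\Omega}^{\bullet}_{X}$ computes the Hodge filtration, and its spectral sequence degenerates, only in the proper case; local cohomology at $x$ is built from cohomology of non-proper opens, and Saito's Hodge filtration there is defined via filtered $\cD$-modules on a compactification. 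Comparing the two filtrations is exactly the hard point, and the paper's proof of Theorem \ref{MuPoConj} is engineered to circumvent it: it works with compactly supported cohomology on an affine chart, uses only the one-way inclusions between $^{d}F$ and $F$, purity and weight arguments for $K=Cone(\Q^{H}_{X}[n]\to IC^{H}_{X})$, and a local-duality computation for $I\underline{\Omega}^{i}_{X}$ coming from self-duality of $IC^{H}_{X}$. Your reduction to ``$Gr^{i}_{F}H^{i}_{x}=Gr^{i}_{F}H^{i+1}_{x}=0$'' therefore assumes the conclusion of precisely the machinery that needs to be built.

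Two further steps would also fail as stated. At points off $\tilde{S}$ you claim $\underline{\Omega}^{i}_{X}$ is ``a reflexive sheaf placed in degree $0$'': but Theorem \ref{VanishingThmintro} kills $\cH^{>0}$ only of $\underline{\Omega}^{0}_{X}$, and rational singularities do not imply $\cH^{>0}(\underline{\Omega}^{i}_{X})=0$ for $i>0$ (that is the much stronger pre-$k$-Du Bois condition); the statement you actually need there, that the complex $\underline{\Omega}^{i}_{X}$ has depth $\ge 2$ on the rational locus, is again exactly Theorem \ref{MuPoConj} and cannot be treated as an easy preliminary. Second, the slicing reduction to the isolated case does not work: a general hyperplane misses any fixed point $x\in\tilde{S}$, so depth bounds for $\underline{\Omega}^{i}_{H}$ say nothing about the depth at $x$, while hyperplanes through $x$ are not general enough for the Bertini-type compatibilities ($\bR f_{*}\cO_{\tX}\otimes\cO_{H}\simeq \bR g_{*}\cO_{\tH}$, the restriction triangle of \cite{GNPP}) to hold. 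In the paper, general hyperplanes are used only to prove that the support of $\cH^{1}(\bR\underline{\Gamma}_{S}(\underline{\Omega}^{i}_{X}))$ is zero-dimensional (an induction plus Nakayama argument in the proof of Theorem \ref{DiffThm}), and the remaining zero-dimensional part is killed by the global affine argument of Theorem \ref{MuPoConj}, not by slicing.
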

 This theorem is actually a corollary to a stronger theorem.

We would like to thank the referee for careful reading, and many helpful comments.

\section{Vanishing theorems}
For the remainder of this paper, $X$ will be an irreducible complex variety of dimension $n$, where a variety will always mean a reduced, separated scheme of finite type over $\C$. In this section,
$Z \subseteq X$ will be a closed sub-variety of $X$ with  dimension $n_{Z}$. We will also denote $U \subseteq X$ as the complement of $Z$  with natural map $j: U \hookrightarrow X$. 

    \subsection*{A vanishing theorem for local cohomology sheaves}

    We begin by generalizing the results given in \cite[\S 5]{hiatt1}, which proved the following results when $X$ is projective and in special cases for $Z.$

    \begin{nota}
In this paper, $\omega^{\bullet}_{X} \in D^{b}_{coh}(\cO_{X})$ will denote the dualizing complex of $X$. Note that when $X$ is smooth, $\omega^{\bullet}_{X} \simeq \Omega^{\dim X}_{X}[\dim X]$. When $X$ is smooth, we define $\omega_{X}:= \Omega^{\dim X}_{X}[\dim X]. $
\end{nota}

    \begin{nota}\label{nota1.8}
Given a complex of sheaves $\cF$, we have the truncations
$$\sigma_{>n}\cF: \cdots \rightarrow 0 \rightarrow \cF^{n+1} \rightarrow \cF^{n+2} \rightarrow \cdots$$
$$\sigma_{\leq n}\cF: \cdots \rightarrow \cF^{n-1} \rightarrow \cF^{n} \rightarrow 0 \rightarrow \cdots$$
$$\tau^{>n}\cF: \cdots \rightarrow 0 \rightarrow \im d^{n} \rightarrow \cF^{n+1} \rightarrow \cdots$$
$$\tau^{\leq n}\cF: \cdots \rightarrow \cF^{n-1} \rightarrow \ker d^{n} \rightarrow 0 \rightarrow \cdots.$$
\end{nota}
    
\begin{thm}\label{MainThm}
     Suppose $\cF^{\bullet}\in D^{b}_{coh}(\cO_{X})$ is quasi-isomorphic to a complex $\tcF^{\bullet}$ with the following properties:
    \begin{enumerate}
        \item $\sigma_{\leq -n-1}(\tcF^{\bullet}\vert_{U}) = 0$;
        \item There exists $\ell \in \{0,1, \cdots ,n\}$ such that $\sigma_{>-n + \ell}(\tcF^{\bullet} \vert_{U}) =  0$;
         \item For all $j \in \Z$, if $\tcF^{j}\vert_{U} \neq 0$, then $\tcF^{j}\vert_{U}$  is a maximal Cohen-Macaulay sheaf on $U$;
        \item There exists an integer $q \geq -n +\ell +1$ such that $\cH^{i}(\tcF) = 0$ for all $i \geq q$.

    \end{enumerate}
    Then $\cH^{p}(\bR \underline{\Gamma_{Z}}\bR \cH om_{X}(\cF^{\bullet}, \omega_{X}^{\bullet})) = 0 \quad \text{for $p \leq -n_{Z} - q$}$.
\end{thm}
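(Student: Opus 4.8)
The plan is to commute the local-cohomology functor $\bR\underline{\Gamma_{Z}}$ past the duality functor and thereby reduce the whole statement to a cohomological amplitude estimate. For the open immersion $j\colon U\hookrightarrow X$ and any $\cF^{\bullet}\in D^{b}_{coh}(\cO_{X})$, the sheaf-Hom adjunction gives $\bR\cH om_{X}(\cF^{\bullet},\bR j_{*}\cN)\cong \bR j_{*}\bR\cH om_{X}(\cF^{\bullet}|_{U},\cN)$, so applying $\bR\cH om_{X}(\cF^{\bullet},-)$ to the excision triangle $\bR\underline{\Gamma_{Z}}\omega_{X}^{\bullet}\to \omega_{X}^{\bullet}\to \bR j_{*}(\omega_{X}^{\bullet}|_{U})$ produces a natural isomorphism
$$\bR\underline{\Gamma_{Z}}\bR\cH om_{X}(\cF^{\bullet},\omega_{X}^{\bullet})\cong \bR\cH om_{X}(\cF^{\bullet},\bR\underline{\Gamma_{Z}}\omega_{X}^{\bullet}).$$
It then suffices to bound below the degrees in which the right-hand side has nonzero cohomology sheaves.

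The technical heart is to show that $\bR\underline{\Gamma_{Z}}\omega_{X}^{\bullet}$ is concentrated in degrees $\geq -n_{Z}$; this is where the dimension of $Z$ enters. I would write $\bR\underline{\Gamma_{Z}}\omega_{X}^{\bullet}\cong \operatorname{colim}_{m}\bR\cH om_{X}(\cO_{X}/\cI^{m},\omega_{X}^{\bullet})$, where $\cI$ is the ideal sheaf of $Z$. Each $\cO_{X}/\cI^{m}$ is coherent and supported on a scheme of dimension $n_{Z}$, so by Grothendieck duality for the corresponding closed immersion its dual is the pushforward of the dualizing complex of that scheme; since an $n_{Z}$-dimensional scheme has dualizing complex in degrees $[-n_{Z},0]$, we get $\bR\cH om_{X}(\cO_{X}/\cI^{m},\omega_{X}^{\bullet})\in D^{\geq -n_{Z}}$ for every $m$. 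A filtered colimit is exact and preserves this bound, hence $\bR\underline{\Gamma_{Z}}\omega_{X}^{\bullet}\in D^{\geq -n_{Z}}$. I expect this concentration statement, which is essentially a form of local duality, to be the main obstacle; the delicate points are the normalization conventions for $\omega_{X}^{\bullet}$ and the justification of the colimit presentation of $\bR\underline{\Gamma_{Z}}$.

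To finish, I would feed in hypothesis (4): since $\cH^{i}(\cF^{\bullet})=0$ for all $i\geq q$, the complex lies in $D^{\leq q-1}$. For any $A\in D^{\leq b}$ and $B\in D^{\geq c}$ one has $\bR\cH om_{X}(A,B)\in D^{\geq c-b}$, as seen by representing $A$ by a complex in degrees $\leq b$ and $B$ by an injective resolution in degrees $\geq c$, so that the Hom double complex lives in degrees $\geq c-b$. Applying this with $b=q-1$ and $c=-n_{Z}$ gives
$$\bR\underline{\Gamma_{Z}}\bR\cH om_{X}(\cF^{\bullet},\omega_{X}^{\bullet})\cong \bR\cH om_{X}(\cF^{\bullet},\bR\underline{\Gamma_{Z}}\omega_{X}^{\bullet})\in D^{\geq -n_{Z}-q+1},$$
whose cohomology sheaves vanish in every degree $p\leq -n_{Z}-q$, which is the assertion.

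In this line of argument the decisive inputs are the amplitude bound (4) and the concentration of $\bR\underline{\Gamma_{Z}}\omega_{X}^{\bullet}$. Hypotheses (1)--(3), which identify $\cF^{\bullet}|_{U}$ with a complex of maximal Cohen--Macaulay sheaves sitting in degrees $[-n,-n+\ell]$, instead control the restriction $j^{*}\bR\cH om_{X}(\cF^{\bullet},\omega_{X}^{\bullet})$; they would become the relevant ingredients if one preferred to run the estimate through the excision triangle for $\cF^{\bullet}$ itself, analyzing $\cH^{p}$ of $\bR\cH om_{X}(\cF^{\bullet},\omega_{X}^{\bullet})$ and of $\bR j_{*}j^{*}$ separately, rather than commuting $\bR\underline{\Gamma_{Z}}$ inside as above.
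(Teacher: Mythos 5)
Your proof is correct and takes a genuinely different route from the paper's. The paper argues pointwise: it reduces to $X=\Spec(R)$ and, for each prime $P\supseteq I$, combines the spectral sequence for $\bR\Gamma_{P}\circ\bR\Gamma_{I}$ with Grothendieck local duality over $R_{P}$, identifying the lowest potentially nonvanishing $\cH^{b}(\bR\Gamma_{I}\bR\cH om_{R}(\cF^{\bullet},\omega_{R}^{\bullet}))$ with a Matlis dual of $\cH^{-b-n(P)}(\cF^{\bullet}_{P})$, which hypothesis (4) kills because $n(P)\leq n_{Z}$; it then globalizes via Lemma \ref{AssLemma}. Since that lemma is stated for finitely generated modules, the paper first proves coherence of these local cohomology sheaves, and that companion lemma is the only place where hypotheses (1)--(3) are used (through Siu's coherence theorem for $R^{p}j_{*}$ of maximal Cohen--Macaulay sheaves). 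You instead compress the vanishing into two formal facts: moving $\bR\underline{\Gamma_{Z}}$ inside the second slot of $\bR\cH om$ (a consequence of $\cH om_{X}(\cF^{\bullet},j_{*}(-))=j_{*}\cH om_{U}(\cF^{\bullet}\vert_{U},-)$ and exactness of $j^{*}$), and the concentration $\bR\underline{\Gamma_{Z}}\omega_{X}^{\bullet}\in D^{\geq-n_{Z}}$, which is precisely the sheafified form of the local duality the paper invokes prime by prime, with your $\dim V(\cI^{m})=n_{Z}$ playing the role of the paper's inequality $n(P)\leq n_{Z}$. Both steps are sound, including the two points you flagged: with the paper's normalization ($\omega_{X}^{\bullet}\simeq\Omega_{X}^{\dim X}[\dim X]$ on the smooth locus), $\bR\cH om_{X}(\cO_{X}/\cI^{m},\omega_{X}^{\bullet})$ is the pushforward of the dualizing complex of the thickening $V(\cI^{m})$, concentrated in degrees $[-n_{Z},0]$ because grade equals codimension after embedding in a smooth ambient variety, and the colimit presentation of $\bR\underline{\Gamma_{Z}}$ is valid on any noetherian scheme. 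Your route buys brevity and a sharper logical picture: it shows the vanishing uses only hypothesis (4) together with boundedness, so (1)--(3) are superfluous for this particular statement (in the paper they feed the coherence lemma into the finite-generation hypothesis of Lemma \ref{AssLemma}; in fact, since every nonzero module over a noetherian ring has an associated prime, even that detour could be avoided). What the paper's route buys is the coherence lemma itself, which is where (1)--(3) genuinely do work and which has independent interest, together with an explicit identification of the local cohomology modules as Matlis duals of cohomology of $\cF^{\bullet}_{P}$ --- information that your amplitude estimate discards.
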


\begin{lemma}\label{AssLemma}
    Let $R$ be a noetherian ring and $M$ be a finitely generated $R$ module. If $\Gamma_{PR_{P}}(M_{P}) = 0$ for all $P \in Supp (M)$, then $M = 0.$
\end{lemma}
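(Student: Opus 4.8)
The plan is to prove the contrapositive: assuming $M \neq 0$, I will exhibit a single prime $P \in \operatorname{Supp}(M)$ for which $\Gamma_{PR_P}(M_P) \neq 0$, directly contradicting the hypothesis. The natural source of such a prime is an associated prime of $M$, since an associated prime plants a copy of a residue field inside $M$, and a residue field is exactly the kind of module the functor $\Gamma$ detects.

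Concretely, since $R$ is noetherian and $M \neq 0$ is finitely generated, the set $\operatorname{Ass}(M)$ is nonempty and $\operatorname{Ass}(M) \subseteq \operatorname{Supp}(M)$. I would fix any $P \in \operatorname{Ass}(M)$, so that there is $m \in M$ with $\operatorname{Ann}_R(m) = P$, giving an injection $R/P \hookrightarrow M$. Localizing at $P$ is exact, so it yields an injection $R_P/PR_P \hookrightarrow M_P$. Its image is a nonzero submodule of $M_P$ annihilated by the maximal ideal $PR_P$, hence by a power of it, and therefore lies in $\Gamma_{PR_P}(M_P)$. Thus $\Gamma_{PR_P}(M_P) \neq 0$ for this $P \in \operatorname{Supp}(M)$, the desired contradiction, forcing $M = 0$.

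There is no genuine obstacle here beyond invoking two standard facts about noetherian rings: that a nonzero finitely generated module has an associated prime, and that localization commutes with the torsion functor $\Gamma$ (and with the embedding $R/P \hookrightarrow M$). If one prefers to argue purely in terms of the support as written in the statement, an equivalent route is to choose $P$ minimal in the closed set $\operatorname{Supp}(M) = V(\operatorname{Ann}_R M)$; then $M_P$ is a nonzero finitely generated $R_P$-module whose support is the single point $\{PR_P\}$, so $M_P$ is annihilated by a power of $PR_P$ and hence $\Gamma_{PR_P}(M_P) = M_P \neq 0$, again contradicting the hypothesis.
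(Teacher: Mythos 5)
Your proof is correct and follows essentially the same route as the paper: both arguments reduce to the fact that a nonzero finitely generated module over a noetherian ring has an associated prime $P$, which lies in $\operatorname{Supp}(M)$ and forces $\Gamma_{PR_P}(M_P) \neq 0$. The only difference is that you verify this last fact by hand, via the embedding $R_P/PR_P \hookrightarrow M_P$, whereas the paper cites Eisenbud's description of $\operatorname{Ass}(\Gamma_P(M))$ to the same effect.
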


\begin{proof}
    By \cite[pg. 100]{Eisenbud} $Ass(\Gamma_{P}(M)) = \{Q \in Ass(M)\vert$ $P \subset Q\}.$ If we localize at $P$, then $Ass(\Gamma_{PR_{P}}(M_{P})) = \{PR_{P}\}$ if and only if $P \in Ass(M).$ If  $\Gamma_{PR_{P}}(M_{P}) = 0$ for all $P \in Supp(M)$, then $Ass(M) = \emptyset$. If $Ass(M) = \emptyset$, then $M = 0.$
\end{proof}

\begin{lemma}
    With the same assumptions as the theorem, $\cH^{p}(\bR \underline{ \Gamma_{Z}}\bR \cH om_{X}(\cF^{\bullet}, \omega_{X}^{\bullet}))$ is coherent for $p \leq -n_{Z} - q.$
\end{lemma}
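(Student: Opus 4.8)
The plan is to deduce coherence from a depth estimate fed into the finiteness theorem for local cohomology. Write $\cG^\bullet := \bR\cH om_X(\cF^\bullet, \omega_X^\bullet) \in D^{b}_{coh}(\cO_X)$, so the object in question is $\cH^p(\bR\underline{\Gamma_Z}\,\cG^\bullet)$. The central point is to bound from below the depth of $\cG^\bullet$ at every point of $U$, and then quote that $\bR\underline{\Gamma_Z}$ has coherent cohomology sheaves in degrees below the resulting finiteness dimension.

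First I would compute depths on $U$ by Grothendieck duality. Fix $P \in U$ and set $d_P = \dim \cO_{X,P}$, so that $d_P + \dim\overline{\{P\}} = n$ since $X$ is an irreducible variety. Normalizing the stalk $(\omega_X^\bullet)_P$ against the standard dualizing complex $D_P^\bullet$ of the local ring $\cO_{X,P}$ introduces a shift by $n - d_P$; combining local duality with biduality $\bR\cH om_X(\cG^\bullet, \omega_X^\bullet) \simeq \cF^\bullet \simeq \tcF^\bullet$ yields, for every $i$,
$$H^i_{\mathfrak m_P}(\cG^\bullet_P)^\vee \cong \cH^{-i + d_P - n}(\tcF^\bullet_P).$$
By hypotheses (1) and (2) the complex $\tcF^\bullet\vert_U$ has its terms, hence its cohomology, concentrated in degrees $[-n,\, -n+\ell]$, so the right-hand side vanishes unless $i \in [d_P - \ell,\, d_P]$. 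Consequently $\depth_P \cG^\bullet \ge d_P - \ell$ for all $P \in U$. (The maximal Cohen–Macaulay hypothesis (3) is not needed for this estimate, only the amplitude of $\tcF^\bullet\vert_U$.)

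Next I would feed this into the finiteness theorem for local cohomology of a complex (Grothendieck, Faltings): $\cH^p(\bR\underline{\Gamma_Z}\,\cG^\bullet)$ is coherent for all $p$ strictly below
$$ f_Z(\cG^\bullet) \;=\; \inf_{P \in U}\Big(\depth_P \cG^\bullet + \dim\overline{\{P\}} - \dim(\overline{\{P\}}\cap Z)\Big).$$
Using $\depth_P \cG^\bullet \ge d_P - \ell$, the identity $d_P + \dim\overline{\{P\}} = n$, and $\dim(\overline{\{P\}}\cap Z) \le \dim Z = n_Z$, each term of the infimum is at least $(d_P - \ell) + (n - d_P) - n_Z = n - \ell - n_Z$, so $f_Z(\cG^\bullet) \ge n - \ell - n_Z$. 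Finally, hypothesis (4) reads $q \ge -n + \ell + 1$, i.e. $-n_Z - q \le n - \ell - n_Z - 1 < f_Z(\cG^\bullet)$, so the desired range $p \le -n_Z - q$ lies strictly below the finiteness dimension and coherence follows.

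The main obstacle is not the numerology, which closes exactly against condition (4), but invoking the finiteness theorem in the correct generality: one must use a version for an object of $D^{b}_{coh}$ rather than a single coherent sheaf, since a naive dévissage through the cohomology sheaves $\cH^t(\cG^\bullet)$ loses control (individual cohomology sheaves can have smaller depth than the complex). I would either cite a derived form of the theorem or reduce to the sheaf case through the local-cohomology triangle $\bR\underline{\Gamma_Z}\,\cG^\bullet \to \cG^\bullet \to \bR j_*(\cG^\bullet\vert_U)$, using coherence of $\cH^p(\cG^\bullet)$ to transfer the problem to coherence of $\cH^{p-1}(\bR j_*(\cG^\bullet\vert_U))$. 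The other delicate point is the duality bookkeeping: keeping the normalization shift $n - d_P$ straight across the varying local dimensions $d_P$, and discarding points $P$ whose closure misses $Z$ (these contribute $+\infty$ to the infimum).
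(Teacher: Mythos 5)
Your depth computation is correct as far as it goes: local duality plus biduality does give $H^{i}_{\mathfrak{m}_{P}}(\cG^{\bullet}_{P})^{\vee}\cong \cH^{-i+d_{P}-n}(\tcF^{\bullet}_{P})$, and hypotheses (1)--(2) alone then force $\depth_{P}\cG^{\bullet}\geq d_{P}-\ell$ at every $P\in U$. The gap is the next step. The finiteness theorem you invoke --- coherence of $\cH^{p}(\bR\underline{\Gamma_{Z}}\,\cG^{\bullet})$ for $p$ below an infimum of complex depth plus codimension --- is a theorem of Grothendieck and Faltings only for a single coherent sheaf (or module). For an object of $D^{b}_{coh}$ with depth measured by local cohomology of the complex, it is not a formal consequence of the sheaf case: as you yourself note, d\'evissage through the cohomology sheaves fails, because $\depth \cH^{t}(\cG^{\bullet})_{P}$ can be strictly smaller than what the complex-level bound sees (cancellation in the local-cohomology spectral sequence is exactly what your estimate permits, and exactly what happens here, e.g. $\cH^{1}(\cG^{\bullet})$ can be supported on a curve while the complex depth stays maximal). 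Your two proposed repairs do not close this: ``cite a derived form of the theorem'' is backed by no reference, and the attribution is inaccurate for that form; while ``reduce to the sheaf case through the triangle $\bR\underline{\Gamma_{Z}}\cG^{\bullet}\to\cG^{\bullet}\to \bR j_{*}(\cG^{\bullet}\vert_{U})$'' merely converts the problem into coherence of $\cH^{p-1}(\bR j_{*}(\cG^{\bullet}\vert_{U}))$, i.e.\ the same question for a complex, where the d\'evissage obstruction you identified recurs verbatim. So the central step of the proposal is unproven.

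This is precisely where the paper uses hypothesis (3), which you discard as unnecessary. Because each $\tcF^{j}\vert_{U}$ is maximal Cohen--Macaulay, its dual $\bR\cH om_{U}(\tcF^{j}\vert_{U},\omega^{\bullet}_{U})$ is a single MCM sheaf concentrated in one degree; hence $\cG^{\bullet}\vert_{U}$ is represented by an honest complex of MCM sheaves in degrees $[-\ell,0]$, and the paper d\'evissages through the \emph{stupid} filtration --- through the terms, not the cohomology sheaves --- reducing by induction on $\ell$ to coherence of $R^{p}j_{*}$ of a single MCM sheaf, which is Siu's extension theorem ($R^{p}j_{*}$ coherent for $p\leq n-n_{Z}-2$); the numerology against (4) then closes just as in your computation. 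Your complex-depth criterion may well be true --- one plausible route is to truncate the dual complex $\bR\cH om_{X}(\cG^{\bullet},\omega^{\bullet}_{X})\simeq\cF^{\bullet}$ rather than $\cG^{\bullet}$ itself, since co-truncation does preserve the pointwise depth bound, with the base case handled by the local-duality argument of Theorem \ref{MainThm} --- but that is a genuine theorem requiring proof, not a citation, and as written your argument rests on it without establishing it.
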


\begin{proof}
    We follow the proof of \cite[Thm. 5.1]{hiatt1}. First, we may assume $\cF = \tcF.$ Consider the exact triangle
    $$ \begin{tikzcd} \bR \underline{\Gamma_{Z}}\bR \cH om_{X}(\cF^{\bullet}, \omega_{X}^{\bullet})  \arrow[r] & \bR \cH om_{X}(\cF^{\bullet}, \omega_{X}^{\bullet}) \arrow[r] & \bR j_{*} j^{*}\bR \cH om_{X}(\cF^{\bullet}, \omega_{X}^{\bullet}) \arrow[r, "+1"] & \hfill. \end{tikzcd}$$
    Since $\cF \in D^{b}_{coh}(\cO_{X})$, we have $\bR \cH om_{X}(\cF^{\bullet}, \omega_{X}^{\bullet}) \in D^{b}_{coh}(\cO_{X}).$ So, by the exact triangle above, it suffices to show $\cH^{p}(\bR j_{*} j^{*}\bR \cH om_{X}(\cF^{\bullet}, \omega_{X}^{\bullet}))$ is coherent for $p \leq -n_{Z} - q -1.$ By our assumptions, the complex $\cF^{\bullet}\vert_{U}$ is given by
$$\cdots \rightarrow 0 \rightarrow \cF^{-n}\vert_{U} \rightarrow \cF^{-n+1}\vert_{U} \rightarrow \cdots \rightarrow \cF^{ -n+\ell}\vert_{U} \rightarrow 0 \rightarrow \cdots.$$
 The complex $ j^{*} \bR \cH om_{X}(\cF^{\bullet}, \omega^{\bullet}_{X})$ is given by
$$\cdots \rightarrow 0 \rightarrow (\cF^{-n+\ell}\vert_{U})^{\vee}  \rightarrow (\cF^{ -n +\ell -1}\vert_{U})^{\vee} \rightarrow \cdots \rightarrow (\cF^{-n}\vert_{U})^{\vee}  \rightarrow 0 \rightarrow \cdots,$$
where $(\cF^{j}\vert_{U})^{\vee} = \cH^{-n}(\bR \cH om_{U}(\cF^{j}\vert_{U}, \omega^{\bullet}_{U}))$. Note that the sheaf $(\cF^{j}\vert_{U})^{\vee}$ is in the $-(j +n)^{th}$ spot for $j \in \{-n, -n +\ell\}$. To simplify the notation, we will let $\cG:= j^{*} \bR \cH om_{X}(\cF^{\bullet}, \omega^{\bullet}_{X}) $, and we will show $R^{p}j_{*}\cG$ is coherent for $p \leq - n_{Z} - q -1.$ By \cite[Lemma I.7.2]{Hart1}, there is an exact sequence of complexes given by
$$ 0 \rightarrow \sigma_{> -\ell}\cG \rightarrow \cG \rightarrow \cG^{-\ell}[\ell] \rightarrow 0.$$
By applying $\bR j_{\ast}$ to the exact sequence above, there is an exact triangle
$$\begin{tikzcd}
 \bR j_{\ast}\sigma_{> -\ell}\cG \arrow[r] & \bR j_{\ast}\cG \arrow[r]& \bR j_{\ast}\cG^{-\ell}[\ell] \arrow[r, "+1"] & \hfill. 
 \end{tikzcd}$$
By induction on $\ell$, it suffices to show $R^{p}j_{\ast}\cG^{-\ell}[\ell]$ is coherent for $p \leq - n_{Z} -q -1.$ Which is equivalent to showing $R^{p}j_{\ast}\cG^{-\ell}$ is coherent for $p \leq  \ell- n_{Z}-q  - 1.$ Note that we have inequalities
$$p \leq  \ell- n_{Z}-q  - 1 \leq n -n_{Z} -2.$$ 
Because the sheaf $\cG^{-\ell} $ is a maximal Cohen-Macaulay sheaf $U$, the sheaf $R^{p}j_{\ast}\cG^{-\ell}$ is coherent for $p \leq n - n_{Z} -2$  \cite{Siu} (see also \cite[VI. vi]{Hart3}).  Hence $R^{p}j_{\ast}\cG^{-\ell}$ is coherent for $p \leq \ell -n_{Z} -q -1$.

\end{proof}

\begin{proof}[Proof of Theorem \ref{MainThm}.] We may assume $X$ is affine because the problem is local. Let $X = \Spec(R)$, $I$ the ideal for $Z$, and $P$ be any prime ideal such that $I \subset P$. We will denote $n(P)$ for $ \dim R/P.$ There is a spectral sequence given by
    $$ \cH^{a}(\bR \underline{\Gamma_{P}}( \cH^{b}(\bR \underline{\Gamma_{I}}\bR \cH om_{R}(\cF^{\bullet}, \omega_{R}^{\bullet}))) \Rightarrow \cH^{a +b}(\bR \underline{\Gamma_{P}} \bR \underline{\Gamma_{I}}\bR \cH om_{R}(\cF^{\bullet}, \omega_{R}^{\bullet})) \simeq \cH^{a +b}(\bR \underline{\Gamma_{P}}\bR \cH om_{R}(\cF^{\bullet}, \omega_{R}^{\bullet})).$$
If we consider the local ring $(A,m) = (R_{P},PR_{p})$, for any $M^{\bullet}\in D^{b}_{coh}(A)$, by local duality \cite[Cor. 6.3]{Hart1}, there is an isomorphism
$$\bR \underline{\Gamma_{m}}(M^{\bullet}) \simeq \cH om \bigg(\bR  \cH om_{A}(M^{\bullet
},\omega^{\bullet}_{A}),\kappa(P) \bigg),$$
where $\kappa(P)$ is the injective hull of $A/m.$
If we let $M^{\bullet} = \bR \cH om_{R}(\cF^{\bullet}, \omega_{R}^{\bullet})_{P}$, there is a  quasi-isomorphism
$$(\bR \underline{\Gamma_{P}}\bR \cH om_{R}(\cF^{\bullet}, \omega_{R}^{\bullet}))_{P} \simeq \cH om_{R_{P}}(\cF^{\bullet}_{P}, \kappa(P))[n(P)].$$
Hence
$$\cH^{a +b}(\bR \underline{\Gamma_{P}}\bR \cH om_{R}(\cF^{\bullet}, \omega_{R}^{\bullet}))_{P} \cong  \cH om_{R_{P}}(\cH^{-a -b - n(P)}(\cF^{\bullet}_{P}), \kappa(P)).$$
Suppose $\{b \in \Z \vert$ $b \leq -n_{Z} - q$ and $\cH^{b}(\bR \underline{\Gamma_{I}}\bR \cH om_{R}(\cF^{\bullet}, \omega_{R}^{\bullet})) \neq 0\} \neq \emptyset$ and let 
\begin{center}
    $b_{m} = \min \{b \in \Z \vert$ $b \leq -n_{Z} - q$ and $\cH^{b}(\bR \underline{\Gamma_{I}}\bR \cH om_{R}(\cF^{\bullet}, \omega_{R}^{\bullet})) \neq 0\}.$ 
\end{center}
Then, by the spectral sequence above,
$$\underline{\Gamma_{PR_{P}}}(\cH^{b_{m}}(\bR \underline{\Gamma_{I}}\bR \cH om_{R}(\cF^{\bullet}, \omega_{R}^{\bullet}))_{P}) \cong \cH^{b_{m}}(\bR \underline{\Gamma_{P}}\bR \cH om_{R}(\cF^{\bullet}, \omega_{R}^{\bullet}))_{P} \cong  \cH om_{R_{P}}(\cH^{-b_{m} - n(P)}(\cF^{\bullet}_{P}), \kappa(P)).$$
Notice that $-b_{m} - n(P) \geq q + n_{Z} - n(P) \geq q.$ By the fourth assumption of the theorem,
$$\underline{\Gamma_{PR_{P}}}(\cH^{b_{m}}(\bR \underline{\Gamma_{I}}\bR \cH om_{R}(\cF^{\bullet}, \omega_{R}^{\bullet}))_{P}) \cong  \cH om_{R_{P}}(\cH^{-b_{m} - n(P)}(\cF^{\bullet}_{P}), \kappa(P)) =0.$$ 
This isomorphism holds for all $P \in \Spec (R)$ such that $I \subset P$. By Lemma \ref{AssLemma}, we must have $\cH^{b_{m}}(\bR \underline{\Gamma_{I}}\bR \cH om_{R}(\cF^{\bullet}, \omega_{R}^{\bullet})) = 0$. Therefore $\cH^{b}(\bR \underline{\Gamma_{I}}\bR \cH om_{R}(\cF^{\bullet}, \omega_{R}^{\bullet})) = 0$ for $b \leq -n_{Z} - q.$

\end{proof}

\begin{cor}\label{MainCor}
With the same assumptions as Theorem \ref{MainThm}, the natural map
$$ \cH^{p}(\bR \cH om_{X}(\cF^{\bullet}, \omega^{\bullet}_{X}))\rightarrow \cH^{p}(\bR j_{\ast}\bR \cH om_{U}(\cF^{\bullet}\vert_{U}, \omega^{\bullet}_{U}))$$
is an isomorphism for $p \leq -n_{Z} -q -1$ and injective for $p = -n_{Z} - q$.
\end{cor}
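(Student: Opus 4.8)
The plan is to deduce this corollary directly from the vanishing established in Theorem \ref{MainThm} by exploiting the same distinguished triangle that appeared in the coherence lemma. Concretely, I would start from the exact triangle
$$\bR \underline{\Gamma_{Z}}\bR \cH om_{X}(\cF^{\bullet}, \omega_{X}^{\bullet}) \rightarrow \bR \cH om_{X}(\cF^{\bullet}, \omega_{X}^{\bullet}) \rightarrow \bR j_{*} j^{*}\bR \cH om_{X}(\cF^{\bullet}, \omega_{X}^{\bullet}) \xrightarrow{+1}$$
and take the associated long exact sequence of cohomology sheaves. The key observation is that $j^{*}\bR \cH om_{X}(\cF^{\bullet}, \omega_{X}^{\bullet}) \simeq \bR \cH om_{U}(\cF^{\bullet}\vert_{U}, \omega_{U}^{\bullet})$, since $\omega_{X}^{\bullet}\vert_{U} \simeq \omega_{U}^{\bullet}$ (dualizing complexes restrict to dualizing complexes on opens) and $\bR\cH om$ commutes with restriction to an open subset. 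Thus the third term of the triangle is exactly $\bR j_{*}\bR \cH om_{U}(\cF^{\bullet}\vert_{U}, \omega_{U}^{\bullet})$, and the map whose behavior we must analyze is precisely the connecting map in the long exact sequence.

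Next I would read off the long exact sequence in the relevant range of degrees. It reads, for each $p$,
$$\cdots \rightarrow \cH^{p}(\bR \underline{\Gamma_{Z}}\bR \cH om_{X}(\cF^{\bullet}, \omega_{X}^{\bullet})) \rightarrow \cH^{p}(\bR \cH om_{X}(\cF^{\bullet}, \omega_{X}^{\bullet})) \rightarrow \cH^{p}(\bR j_{*}\bR \cH om_{U}(\cF^{\bullet}\vert_{U}, \omega_{U}^{\bullet})) \rightarrow \cH^{p+1}(\bR \underline{\Gamma_{Z}}\bR \cH om_{X}(\cF^{\bullet}, \omega_{X}^{\bullet})) \rightarrow \cdots.$$
Theorem \ref{MainThm} gives $\cH^{m}(\bR \underline{\Gamma_{Z}}\bR \cH om_{X}(\cF^{\bullet}, \omega_{X}^{\bullet})) = 0$ for all $m \leq -n_{Z}-q$. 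So for $p \leq -n_{Z}-q-1$ both the flanking local cohomology terms $\cH^{p}$ and $\cH^{p+1}$ vanish (since $p+1 \leq -n_{Z}-q$), which forces the middle map to be an isomorphism. For the boundary case $p = -n_{Z}-q$, the left term $\cH^{p}$ still vanishes but $\cH^{p+1}$ need not, so one obtains only injectivity. This matches the two assertions of the corollary exactly.

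There is essentially no serious obstacle here; the only point requiring a little care is the identification $j^{*}\bR \cH om_{X}(\cF^{\bullet}, \omega_{X}^{\bullet}) \simeq \bR \cH om_{U}(\cF^{\bullet}\vert_{U}, \omega_{U}^{\bullet})$, which is the compatibility of duality with restriction to the open set $U$ and the fact that $j^{*}\omega_{X}^{\bullet}$ serves as a dualizing complex on $U$. I would state this as a standard fact (it is already implicitly used in the coherence lemma, where $\cG := j^{*}\bR \cH om_{X}(\cF^{\bullet}, \omega_{X}^{\bullet})$ is analyzed via its terms $(\cF^{j}\vert_{U})^{\vee}$ computed with $\omega_{U}^{\bullet}$). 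Once this identification is in place, the corollary is a purely formal consequence of the long exact sequence together with the vanishing range from Theorem \ref{MainThm}, so I would keep the write-up to a few lines.
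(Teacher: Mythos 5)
Your proposal is correct and is essentially the paper's own proof: the paper's argument consists precisely of applying Theorem \ref{MainThm} to the same distinguished triangle $\bR \underline{\Gamma_{Z}}\bR \cH om_{X}(\cF^{\bullet}, \omega_{X}^{\bullet}) \to \bR \cH om_{X}(\cF^{\bullet}, \omega_{X}^{\bullet}) \to \bR j_{*} j^{*}\bR \cH om_{X}(\cF^{\bullet}, \omega_{X}^{\bullet}) \xrightarrow{+1}$, and you have simply written out the long exact sequence bookkeeping and the identification $j^{*}\bR \cH om_{X}(\cF^{\bullet}, \omega_{X}^{\bullet}) \simeq \bR \cH om_{U}(\cF^{\bullet}\vert_{U}, \omega_{U}^{\bullet})$ that the paper leaves implicit.
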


\begin{proof}
Apply the previous theorem to the exact triangle
 $$ \begin{tikzcd}\bR \underline{\Gamma_{Z}}\bR \cH om_{X}(\cF^{\bullet}, \omega_{X}^{\bullet})  \arrow[r] & \bR \cH om_{X}(\cF^{\bullet}, \omega_{X}^{\bullet}) \arrow[r] & \bR j_{*} j^{*}\bR \cH om_{X}(\cF^{\bullet}, \omega_{X}^{\bullet}) \arrow[r, "+1"] & \hfill. \end{tikzcd}$$
\end{proof}

\begin{cor}\label{LogCor}
     Let $S$ be the singular locus of $X$ with dimension $n_{S}$. Let $f:\tX \rightarrow X$ be a strong log resolution. Then 
     $$\cH^{p}(\bR \underline{\Gamma_{S}} \bR f_{*}\Omega^{q}_{\tX}(\log E)) = 0 \quad \text{for $p +q \leq n -n_{S}  -1.$}$$

\end{cor}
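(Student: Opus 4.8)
The plan is to derive the corollary from Theorem~\ref{MainThm} with $Z=S$, by recognizing $\bR f_{*}\Omega^{q}_{\tX}(\log E)$, up to a shift by $n$, as the Grothendieck dual of a suitable coherent complex. Concretely, I would set
$$\cF^{\bullet} := \bR f_{*}\bigl(\Omega^{n-q}_{\tX}(\log E)(-E)\bigr)\in D^{b}_{coh}(\cO_{X}),$$
and first establish the isomorphism
$$\bR\cH om_{X}(\cF^{\bullet},\omega^{\bullet}_{X})\;\simeq\;\bR f_{*}\Omega^{q}_{\tX}(\log E)[n].$$
Granting this, Theorem~\ref{MainThm} produces the vanishing of $\cH^{p}(\bR\underline{\Gamma_{S}}\bR\cH om_{X}(\cF^{\bullet},\omega^{\bullet}_{X}))$ in a range of $p$, and unwinding the shift $[n]$ turns that range into the asserted $p+q\le n-n_{S}-1$.

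To obtain the displayed isomorphism I would argue in two steps. On the smooth variety $\tX$ with its SNC divisor $E$, the residue (wedge) pairing $\Omega^{a}_{\tX}(\log E)\otimes\Omega^{n-a}_{\tX}(\log E)\to\Omega^{n}_{\tX}(\log E)$ is perfect, and since $\Omega^{n}_{\tX}(\log E)=\Omega^{n}_{\tX}(E)$ a short computation with duals and the twist $(-E)$ gives
$$\bR\cH om_{\tX}\bigl(\Omega^{n-q}_{\tX}(\log E)(-E),\,\omega^{\bullet}_{\tX}\bigr)\simeq\Omega^{q}_{\tX}(\log E)[n],$$
using $\omega^{\bullet}_{\tX}\simeq\Omega^{n}_{\tX}[n]$. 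I would then invoke Grothendieck duality for the proper morphism $f$, together with $f^{!}\omega^{\bullet}_{X}\simeq\omega^{\bullet}_{\tX}$, to move $\bR f_{*}$ through $\bR\cH om$ and conclude $\bR\cH om_{X}(\cF^{\bullet},\omega^{\bullet}_{X})\simeq\bR f_{*}\Omega^{q}_{\tX}(\log E)[n]$.

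Next I would check the four hypotheses of Theorem~\ref{MainThm} for $\cF^{\bullet}$ with $Z=S$. A strong log resolution is an isomorphism over the smooth locus $U=X\setminus S$, with $E$ disjoint from $f^{-1}(U)$, so $\cF^{\bullet}|_{U}\simeq\Omega^{n-q}_{U}$ placed in degree $0$. Thus conditions (1) and (2) hold with $\ell=n$, and (3) holds because $\Omega^{n-q}_{U}$ is locally free, hence maximal Cohen–Macaulay, on the smooth $U$. Since $\cH^{i}(\cF^{\bullet})=R^{i}f_{*}(\Omega^{n-q}_{\tX}(\log E)(-E))$, for condition (4) I would take the integer there to be $q_{0}=q+1$, which satisfies $q_{0}\ge -n+\ell+1=1$; the condition then amounts exactly to
$$R^{i}f_{*}\bigl(\Omega^{n-q}_{\tX}(\log E)(-E)\bigr)=0\qquad\text{for all } i\ge q+1.$$
Theorem~\ref{MainThm} then gives $\cH^{p}(\bR\underline{\Gamma_{S}}\bR\cH om_{X}(\cF^{\bullet},\omega^{\bullet}_{X}))=0$ for $p\le -n_{S}-q-1$; rewriting via the isomorphism of the first paragraph, this is $\cH^{p+n}(\bR\underline{\Gamma_{S}}\bR f_{*}\Omega^{q}_{\tX}(\log E))=0$ for $p+n\le n-n_{S}-q-1$, i.e.\ precisely the asserted range.

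The one nonformal step, and the main obstacle, is the vanishing required in (4), namely the logarithmic Grauert–Riemenschneider statement $R^{i}f_{*}(\Omega^{p}_{\tX}(\log E)(-E))=0$ whenever $i+p>n$ (applied with $p=n-q$). The classical Grauert–Riemenschneider theorem is only the extreme case $p=n$, where $\Omega^{n}_{\tX}(\log E)(-E)=\Omega^{n}_{\tX}$ and one recovers $R^{i}f_{*}\Omega^{n}_{\tX}=0$ for $i>0$; the general logarithmic form is a genuine additional input, which I would quote from the literature on logarithmic vanishing theorems rather than reprove. Once it is in hand, the remainder is bookkeeping with Grothendieck duality and the index arithmetic.
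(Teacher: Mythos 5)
Your proposal is correct and follows essentially the same route as the paper: the paper's proof also dualizes $\bR f_{*}\bigl(\Omega^{n-q}_{\tX}(\log E)(-E)\bigr)$ against $\omega^{\bullet}_{X}$ to identify it with $\bR f_{*}\Omega^{q}_{\tX}(\log E)[n]$, applies Theorem~\ref{MainThm} with $Z=S$, and supplies hypothesis (4) by exactly the vanishing you flag as the key input, which the paper quotes as Steenbrink's vanishing theorem. The only difference is cosmetic: the paper states the duality isomorphism without spelling out the log wedge pairing and Grothendieck duality steps that you write out.
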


\begin{proof}
      If we let $U = X \backslash S$, then we may apply Theorem \ref{MainThm} to the complex $\bR f_{*} \Omega^{n-p}_{\tX}(\log E)\in D^{b}_{coh}(\cO_{X})$ with $\ell = n$. Note that $\bR f_{*} \Omega^{n-p}_{\tX}(\log E) \vert_{U} \simeq \Omega^{n-p}_{U}$, and  $\Omega^{n-p}_{U}$ is a maximal Cohen-Macaulay module on $U$ because $U$ is smooth. Also, $R^{i}f_{*}\Omega^{n-q}_{\tX}(\log E)(-E) = 0$ for $i \geq q +1$ by Steenbrink's vanishing theorem \cite{steenbrink}. So, we set $q$ in Theorem \ref{MainThm} to be $q+1.$  We have the quasi-isomorphisms
    $$\cH^{p-n}(\bR \underline{\Gamma_{S}}\bR \cH om_{X}(\bR f_{*} \Omega^{n-q}_{\tX}(\log E)(-E), \omega^{\bullet}_{X})) \cong \cH^{p-n} (\bR \underline{\Gamma_{S}} \bR f_{*} \Omega^{q}_{\tX}(\log E)[n]) \cong  \cH^{p} (\bR \underline{ \Gamma_{S}} \bR f_{*} \Omega^{q}_{\tX}(\log E)),$$
    By Theorem \ref{MainThm},
    $$\cH^{p}(\bR \underline{\Gamma_{S}} \bR f_{*}\Omega^{q}_{\tX}(\log E)) = 0 \quad \text{for $p -n \leq -q -n_{S}  -1.$}$$
\end{proof}

\begin{defn}
Let $Y$ be a locally Noetherian scheme. If $(R,m)$ is the local ring for a closed point $y \in Y$ and $M$ is an element of the bounded derived category of $R$-modules, we define 
$$\depth(M) := \min\{i \hspace{.03in} | \hspace{.03in} \cH^{-i}(\bR H om_{R}(M, \omega_{R}^{\bullet}))\neq 0\} = \min \{i \hspace{.03in} | \hspace{.03in} \cH^{i}(\bR \underline{\Gamma_{m}}(M)) \neq 0 \}.$$ 
If $F$ is a coherent sheaf on $Y$, the depth of $F$ is given by
$$\depth(F):= \min_{y \in Y}(\depth(F_{y})),$$
where $y \in Y$ is any closed point. 

If $\Delta$ a closed subset of $Y$, then we define $$\depth_{\Delta}(F) := \displaystyle \inf_{z \in \Delta}(\depth F_{z}),$$
where $z\in \Delta$ is not necessarily closed. 
\end{defn}

\begin{lemma}\label{depthlemma}
    With the same notation as the previous definition, if the dimension of $\Delta$ is $n_{\Delta},$ then we always have
    $$\depth_{\Delta}(F) \geq \depth (F) - n_{\Delta}.$$
\end{lemma}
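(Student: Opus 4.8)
The plan is to prove the inequality
$$\depth_{\Delta}(F) \geq \depth(F) - n_{\Delta}$$
by a standard induction on $n_\Delta = \dim \Delta$, reducing the general closed point to a nearby point of one dimension lower via a hyperplane-section / regular-element argument. The point of the lemma is that $\depth_\Delta$ takes the infimum over \emph{all} points $z\in\Delta$, while $\depth(F)$ (as defined just above) only sees the \emph{closed} points of $\Delta$; so the content is that passing from a non-closed point $z$ to a closed specialization costs at most $\dim\overline{\{z\}}\le n_\Delta$ in depth.

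First I would fix a point $z \in \Delta$ and set $d = \dim \overline{\{z\}}$ (the dimension of the closure of $z$ in $\Delta$), noting $d \le n_\Delta$. The goal is to show
$$\depth F_z \;\ge\; \depth(F) - d,$$
which immediately gives $\depth F_z \ge \depth(F) - n_\Delta$ and hence the infimum bound. If $d=0$ then $z$ is closed in $\Delta$ and the inequality is the definition of $\depth(F)$, so the base case is trivial. For the inductive step, I would choose a closed point $w$ in $\overline{\{z\}}$ and compare $F_z$ with $F_w$. The key tool is the local statement relating the depth at a point to the depth at a specialization: for a finitely generated module over a local ring, passing from a prime $\mathfrak{p}$ to a prime $\mathfrak{q}\supset\mathfrak{p}$ in its closure one has
$$\depth F_{\mathfrak q} \;\le\; \depth F_{\mathfrak p} + \dim(\cO_{\overline{\{\mathfrak p\}}, \mathfrak q}),$$
i.e. depth can drop by at most the codimension of the specialization. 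Equivalently, via a maximal $F_{\mathfrak q}$-regular sequence, one cuts down by a regular element and reduces the dimension of the relevant closure by one, applying the inductive hypothesis.

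Concretely, I would localize at a closed point $w\in\overline{\{z\}}\subseteq\Delta$ and work in the local ring $\cO_{X,w}$ with module $F_w$. Choose an $F_w$-regular element $t$ in the maximal ideal that is also a parameter cutting $\overline{\{z\}}$ down; then $\depth F_w = \depth (F_w/tF_w) + 1$, while modding out by $t$ reduces $\dim\overline{\{z\}}$ by one. Iterating $d$ times along a system of parameters for the $d$-dimensional local ring $\cO_{\overline{\{z\}}, w}$ produces a chain that connects $\depth F_w$ (bounded below by $\depth(F)$, since $w$ is a closed point of $\Delta$) to $\depth F_z$, losing exactly one in depth at each of the $d$ steps. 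This yields $\depth F_z \ge \depth F_w - d \ge \depth(F) - d \ge \depth(F) - n_\Delta$.

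The main obstacle I anticipate is the bookkeeping in the inductive step: one must guarantee that at each stage a regular element exists which \emph{simultaneously} is regular on the module and genuinely drops the dimension of the closure of the relevant point, so that the codimension count matches the depth drop. If $z$ happens to be an associated point of $F$ (so $\depth F_z = 0$), no such regular element exists, but then the asserted inequality $\depth F_z \ge \depth(F) - n_\Delta$ could still fail unless one is careful—so one must handle the depth-zero primes separately, observing that an associated prime $\mathfrak p$ of $F$ forces $\dim\overline{\{\mathfrak p\}}$ to be large enough that $\depth(F)\le n_\Delta$ already. The cleanest route around this is to invoke the general inequality $\depth F_{\mathfrak p} + \dim(R/\mathfrak p) \le \depth(F)$-type relations together with $\dim(R/\mathfrak p)\le n_\Delta$ for $\mathfrak p$ corresponding to $z\in\Delta$, rather than constructing explicit regular sequences by hand; this turns the argument into a short comparison of depths across a specialization and sidesteps the delicate existence-of-parameter issues.
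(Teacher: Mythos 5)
Your proposal is correct and, in its final ``cleanest route'' form, coincides with the paper's proof: the paper simply cites the standard inequality $\depth(M_{\mathfrak{p}}) + \dim(R/\mathfrak{p}) \geq \depth(M)$ for a finite module over a noetherian local ring (Stacks Project, Tag 00LE) and combines it with $\dim(R/\mathfrak{p}) \leq n_{\Delta}$, exactly as you suggest, making the explicit regular-sequence induction (and the associated-prime caveat it forces) unnecessary. The only slip is in your last paragraph, where the key inequality is written with $\leq$ instead of $\geq$ --- a typo, since you state it in the correct direction earlier.
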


\begin{proof}
    This follows from a well-known inequality in commutative algebra. If $R$ is a noetherian local ring and $M$ is a coherent module, then for any $p \in \Spec(R)$,
    $$\depth(M_{p}) + \dim(R/p) \geq \depth(M)$$
\cite[\href{https://stacks.math.columbia.edu/tag/0FCC}{Tag 0FCC}]{stacks-project}.
\end{proof}

\begin{cor}\label{rat.rmk}
 Let $f: \tX \rightarrow X$ be a resolution of singularities. If $\tilde{S}$ is the the smallest closed subset of $X$ such that $X \backslash \tilde{S}$ has rational singularities, then
    $$R^{i}f_{\ast}\cO_{\tX} = 0 \quad \text{for $1 \leq i \leq \depth_{\tilde{S}}(\cO_{X}) -2$}.$$
    \end{cor}
\begin{proof}
Let $\tilde{U} = X \backslash \tilde{S}$ with inclusion map $\tilde{j}: \tilde{U} \hookrightarrow X$. The complex $\bR f_{*}\cO_{\tX}$ is independent of the resolution of singularities. So, we may assume $f: \tX \rightarrow X$ is a strong log resolution. Furthermore, we may choose a resolution of singularities that is functorial with respect to smooth morphisms \cite[Thm. 36]{kollar3}. So, if $\tilde{V} = f^{-1}(\tilde{U})$ with induced map $g: \tilde{V} \rightarrow \tilde{U}$, then we have $\bR g_{*} \cO_{\tilde{V}} \simeq \cO_{\tilde{U}}.$ 

By the Grauert-Riemenschneider vanshing theorem, if $n_{\tilde{S}} = \dim \tilde{S},$ then $\cH^{i}(\bR \underline{\Gamma}_{\tilde{S}}(\bR f_{*} \cO_{\tX})) = 0$ for $i \leq n - n_{\tilde{S}} -1.$ Therefore, the natural map $R^{i}f_{*}\cO_{\tX} \rightarrow R^{i}\tilde{j}_{*}\cO_{\tilde{U}}$ is an isomorphism for $i \leq n - n_{\tilde{S}} -2$. Note that $\depth_{\tilde{S}}(\cO_{X}) -2 \leq n - n_{\tilde{S}} -2$ and equality holds when $X$ is Cohen-Macaulay. For $1 \leq i \leq \depth_{\tilde{S}}(\cO_{X}) -2$, $R^{i}\tilde{j}_{*}\cO_{U} \cong \cH^{i+1}_{\tilde{S}}(\cO_{X}) = 0$ \cite[Thm. 3.8]{Hart2}.
\end{proof}

\begin{rmk}\label{rat.rmk2}
   If $\tilde{S}$ is the the smallest closed subset of $X$ such that $X \backslash \tilde{S}$ has rational singularities, and $n_{\tilde{S}}$ is the dimension of $\tilde{S}$, then, by Lemma \ref{depthlemma} and the previous corollary, we obtain
     $$R^{i}f_{\ast}\cO_{\tX} = 0 \quad \text{for $1 \leq i \leq \depth(\cO_{X}) -n_{\tilde{S}} -2$}.$$
      A proof of this statement was also given by Popa, Shen, and Vo \cite[Corollary 11.3]{PopaShenVo} (the case when $k = 0).$
\end{rmk}

\begin{rmk}
    Let $f: \tX \rightarrow X$ is a resolutions of singularities. Note that if $X$ is normal and $R^{i}f_{*}\cO_{\tX} = 0$ for $1\leq i \leq k$, then $\depth(\cO_{X}) \geq k$.  This can be easily seen by using the spectral sequence
    $$E^{i,j}_{2} = \cE xt^{i}_{X}(R^{-j}f_{*}\cO_{\tX}, \omega^{\bullet}_{X}) \Rightarrow \cE xt^{i+j}_{X}(\bR f_{*}\cO_{\tX}, \omega^{\bullet}_{X})$$
    and the fact that $R^{i+j}f_{*}\omega_{\tX} \cong \cE xt^{i+j}_{X}(\bR f_{*}\cO_{\tX}, \omega^{\bullet}_{X}) = 0$ unless $i + j =-n.$ We thank the referee for pointing this out.
\end{rmk}

\subsection*{A vanishing theorem for the Du Bois complex}
Recall that to any complex algebraic variety $X$, we can associate a filtered complex $(\underline{\Omega}^{\bullet}_{X}, F)$, or more precisely, an object in the filtered derived category, c.f. \cite[\S 1.1]{dubois}. We define $ \underline{\Omega}^{p}_{X}=Gr^{p}_{F}\underline{\Omega}^{\bullet}_{X}[p]$, which is an object in $D^{b}_{coh}(\cO_{X})$ \cite{dubois}.  A rather different and enlightening construction of the last object can be found in \cite{huber}. These have the following properties:
\begin{enumerate}
\item If we forget the filtration, then there is an isomorphism $\underline{\Omega}^{\bullet}_{X} \simeq \C_{X}$ in the usual derived category.
\item If $\Omega^{\bullet}_{X}$ is the usual de Rham complex with the ``stupid" filtration, there exists a natural map of filtered complexes
$$(\Omega^{\bullet}_{X},F) \rightarrow (\underline{\Omega}^{\bullet}_{X},F)$$
If $X$ is smooth, this map is an isomorphism. In particular, we have a morphism
$$\cO_{X} \rightarrow \underline{\Omega}^{0}_{X}.$$
\item If $\pi_\bullet: X_\bullet \to X$ is a simplicial resolution (i.e., an augmented smooth semi-simplicial scheme satisfying cohomological descent), then by construction
$$\underline{\Omega}_X^p \cong \R \pi_{\bullet} \Omega_{X_\bullet}^p$$

\end{enumerate}

One useful and well-known consequence of (3) is:

\begin{lemma}\label{lemma:dBtorsionfree}
 If $X$ is a variety, then $\cH^0(\underline{\Omega}_X^p)$ is torsion free.
 If $X$ is normal, then $\cO_X=\cH^0(\underline{\Omega}_X^0)$.
\end{lemma}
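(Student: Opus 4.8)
The plan is to feed property (3) a particularly convenient simplicial resolution. Since $X$ is irreducible, I would take a smooth proper hypercovering $\pi_{\bullet}: X_{\bullet} \to X$ (in the sense of Deligne) whose zeroth term $X_0 = \tX$ is an honest resolution of singularities of $X$, so that $X_0$ is irreducible and $\pi_0 = f: \tX \to X$ is proper and birational, the higher $X_m$ being resolutions of the iterated fibre products. By property (3) this computes $\underline{\Omega}^p_X \cong \bR\pi_{\bullet *}\Omega^p_{X_{\bullet}}$.

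First I would isolate $\cH^0$. The descent spectral sequence $E_1^{s,t} = R^t\pi_{s*}\Omega^p_{X_s} \Rightarrow \cH^{s+t}(\underline{\Omega}^p_X)$ contributes to total degree $0$ only through the corner term $E_2^{0,0} = \ker\bigl(\pi_{0*}\Omega^p_{X_0} \xrightarrow{\,d_0^* - d_1^*\,} \pi_{1*}\Omega^p_{X_1}\bigr)$, and no nonzero differential enters or leaves this corner, so
$$\cH^0(\underline{\Omega}^p_X) = \ker\bigl(f_*\Omega^p_{\tX} \longrightarrow \pi_{1*}\Omega^p_{X_1}\bigr) \subseteq f_*\Omega^p_{\tX}.$$
The torsion-free statement then follows from the fact that $f_*\Omega^p_{\tX}$ is itself torsion free: $\tX$ is integral, $\Omega^p_{\tX}$ is locally free, and $f$ is dominant, so any nonzero $g \in \cO_X$ pulls back to a nonzero function on $\tX$ and hence cannot annihilate a nonzero section of $f_*\Omega^p_{\tX}$. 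A subsheaf of a torsion-free sheaf is torsion free, which gives the first claim.

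For the second statement I would specialize to $p = 0$, where the same computation embeds $\cH^0(\underline{\Omega}^0_X)$ as a subsheaf of $f_*\cO_{\tX}$, and since $X$ is normal with $f$ proper and birational, $f_*\cO_{\tX} = \cO_X$ by Zariski's main theorem. Property (2) supplies a map $\cO_X \to \underline{\Omega}^0_X$, and because $\underline{\Omega}^0_X$ is concentrated in nonnegative degrees this induces a genuine sheaf map $\alpha: \cO_X \to \cH^0(\underline{\Omega}^0_X)$. Composing $\alpha$ with the (functoriality) inclusion $\beta: \cH^0(\underline{\Omega}^0_X) \hookrightarrow f_*\cO_{\tX} = \cO_X$ recovers the canonical pullback $\cO_X \to f_*\cO_{\tX}$, which is the above isomorphism. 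Thus $\beta\alpha = \mathrm{id}$ with $\beta$ injective, forcing $\beta$, and hence $\alpha$, to be isomorphisms; that is, $\cO_X = \cH^0(\underline{\Omega}^0_X)$.

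The delicate point, and the main obstacle, is the reduction to an irreducible $X_0$. With a general cubical hyperresolution, $X_0$ may acquire components mapping into the singular locus, and these make $\pi_{0*}\Omega^p_{X_0}$ carry torsion that could in principle survive in the kernel, breaking the argument. I would therefore take care to invoke the version of cohomological descent in which the hypercovering is built on top of a single resolution $\tX \to X$, so that $X_0$ is irreducible and dominant and $\pi_{0*}\Omega^p_{X_0}$ is torsion free from the outset; checking that such a hypercovering computes $\underline{\Omega}^p_X$ (i.e.\ satisfies descent) is the only real input beyond the formal manipulations above.
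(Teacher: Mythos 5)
Your proposal is correct and follows essentially the same route as the paper: both choose a simplicial resolution whose zeroth level $X_0 \to X$ is a resolution of singularities, deduce the inclusion $\cH^0(\underline{\Omega}_X^p) \subseteq \pi_{0*}\Omega_{X_0}^p$ (the paper states this directly, you justify it via the descent spectral sequence), and conclude torsion-freeness from that of $\pi_{0*}\Omega_{X_0}^p$. The only divergence is cosmetic and occurs in the normal case: you compose $\cO_X \to \cH^0(\underline{\Omega}_X^0) \hookrightarrow f_*\cO_{\tX} = \cO_X$ and use Zariski's main theorem plus a splitting argument, whereas the paper observes that $\cO_X \to \cH^0(\underline{\Omega}_X^0)$ is an isomorphism over the smooth locus and extends it across the singular set using $\depth_{S}(\cO_X) \ge 2$ --- both are standard, equivalent uses of normality.
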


\begin{proof}
 We can choose a simplicial resolution such that $\pi_0:X_0\to X$ is a resolution of singularities, then the above formula implies
 $$\cH^0(\underline{\Omega}_X^p)\subseteq \pi_{0*} \Omega_{X_0}^p$$
 This implies torsion freeness. The natural map $\cO_X\to \cH^0(\underline{\Omega}_X^0)$ is an isomorphism on the nonsingular locus $X-S$.
 If $X$ is normal, this must extend to an isomorphism everywhere because $\depth_{S}(\cO_{X})\ge 2$.
\end{proof}

If $\underline{\omega}_{X}^{\bullet} = \bR \cH om_{X}(\underline{\Omega}^{0}_{X}, \omega_{X}^{\bullet})$, then it was shown by Kov\'acs and Schwede \cite{KovacsSch} that the dual map $\cH^{i}(\underline{\omega}_{X}^{\bullet}) \rightarrow \cH^{i}(\omega_{X}^{\bullet})$ is injective. This was also shown by Musta\c{t}\u{a} and Popa \cite{MuPo}. Equivalently, if $p \in X$, then the natural map
    $$\cH^{i}(\bR \underline{\Gamma_{p}}(\cO_{X,p}))\rightarrow  \cH^{i}(\bR \underline{\Gamma_{p}}( (\underline{\Omega}_{X}^{0})_{p}))$$
    is surjective. We use this fact to prove the following lemma.

\begin{lemma}
    Let $X$ be a normal variety and $f: \tX \rightarrow X$ be a resolution of singularities. If $p \in X$ (not necessarily closed) and $R^{i}f_{*} \cO_{\tX} = 0$ for $1 \leq i \leq k$, then
     $$\cH^{i}(\bR \underline{\Gamma_{p}}(\cO_{X,p}))\cong  \cH^{i}(\bR \underline{\Gamma_{p}}( (\underline{\Omega}_{X}^{0})_{p})) \quad \text{for $0 \leq i \leq k$.}$$
\end{lemma}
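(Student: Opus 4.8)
The plan is to upgrade the surjectivity recorded just above (the Kovács--Schwede / Musta\c{t}\u{a}--Popa statement) to an isomorphism by factoring the canonical comparison map through the pushforward of the resolution. Write $a\colon \cO_X \to \Om$ for the natural map and $c\colon \cO_X \to \bR f_*\cO_{\tX}$ for the canonical adjunction map. I would first produce a factorization $c = b\circ a$ with $b\colon \Om \to \bR f_*\cO_{\tX}$. Such a $b$ comes from functoriality of the Du Bois complex applied to $f$: for any morphism there is a natural map $\Om \to \bR f_*\underline{\Omega}^0_{\tX}$, and since $\tX$ is smooth $\underline{\Omega}^0_{\tX}\simeq \cO_{\tX}$, which gives $b$; one checks it is compatible with $a$ and $c$. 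After localizing at $p$ and applying $\bR \underline{\Gamma}_p$ I obtain
$$\cH^i(\bR \underline{\Gamma}_p(\cO_{X,p})) \xrightarrow{a_*} \cH^i(\bR \underline{\Gamma}_p((\Om)_p)) \xrightarrow{b_*} \cH^i(\bR \underline{\Gamma}_p((\bR f_*\cO_{\tX})_p))$$
with $b_* a_* = c_*$, where $a_*$ is already known to be surjective.

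Next I would show that $c_*$ is an isomorphism for $i \le k$. Let $D$ be the cone of $c$. Since $X$ is normal, $\cO_X \xrightarrow{\sim} f_*\cO_{\tX}$, so $\cH^0(D)=0$; and $\cH^j(D)=R^jf_*\cO_{\tX}=0$ for $1\le j\le k$ by hypothesis. Thus $D$ has vanishing cohomology sheaves in degrees $\le k$, and the same holds after localizing at $p$. Because $\bR \underline{\Gamma}_p$ has nonnegative cohomological amplitude (the hypercohomology spectral sequence $\cH^s_{p}(\cH^t(-))\Rightarrow \cH^{s+t}(\bR\underline{\Gamma}_p(-))$ has $s\ge 0$), the complex $\bR\underline{\Gamma}_p(D_p)$ also has vanishing cohomology in degrees $\le k$. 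Feeding the triangle $\cO_{X,p}\to (\bR f_*\cO_{\tX})_p \to D_p \xrightarrow{+1}$ through $\bR\underline{\Gamma}_p$ and reading off the long exact sequence then shows $c_*$ is an isomorphism for $i\le k$.

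Finally I would combine the two facts. For $0\le i\le k$ the map $a_*$ is surjective (given) and $c_*=b_*a_*$ is injective (being an isomorphism), so $\ker a_*\subseteq \ker c_*=0$ forces $a_*$ to be injective as well; hence $a_*$ is an isomorphism in this range, which is the claim. The step I expect to require the most care is the construction of the factorizing map $b\colon \Om\to \bR f_*\cO_{\tX}$ together with the verification that $b\circ a$ genuinely equals the canonical map $c$; this rests on naturality of the Du Bois complex and on the identification $\underline{\Omega}^0_{\tX}\simeq\cO_{\tX}$ over the smooth $\tX$. Everything else is formal manipulation of exact triangles together with the amplitude bound for local cohomology.
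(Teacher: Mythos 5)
Your proposal is correct and follows essentially the same route as the paper: the paper's proof likewise combines the Kov\'acs--Schwede/Musta\c{t}\u{a}--Popa surjectivity with the factorization $\cO_X \to \underline{\Omega}_X^0 \to \bR f_*\cO_{\tX}$ and the fact that the composite induces isomorphisms on local cohomology in degrees $\le k$ (by normality and the vanishing hypothesis), deducing injectivity and hence the isomorphism. The only difference is that you spell out the construction of the factorizing map and the cone/amplitude argument, which the paper leaves implicit.
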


\begin{proof}
     Notice that if $X$ is normal and $R^{i}f_{*}(\cO_{\tX}) = 0$ for $1 \leq i \leq k$, then 
    $$\cH^{i}(\bR \underline{\Gamma_{p}}(\cO_{X,p}))\cong  \cH^{i}(\bR \underline{\Gamma_{p}}( (\bR f_{*}\cO_{X})_{p})) \quad \text{for $0 \leq i \leq k$.}$$
    Since there is a natural factorization 
    $$\cH^{i}(\bR \underline{\Gamma_{p}}(\cO_{X,p}))\rightarrow  \cH^{i}(\bR \underline{\Gamma_{p}}( (\underline{\Omega}_{X}^{0})_{p})) \rightarrow \cH^{i}(\bR \underline{\Gamma_{p}}( (\bR f_{*}\cO_{X})_{p})), $$
    we have 
    $$\cH^{i}(\bR \underline{\Gamma_{p}}(\cO_{X,p})) \cong  \cH^{i}(\bR \underline{\Gamma_{p}}( (\underline{\Omega}_{X}^{0})_{p}))  \quad \text{for $0 \leq i \leq k$.}$$

\end{proof}

\begin{thm}\label{VanishingThm}
Let $X$ be a normal variety and $f: \tX \rightarrow X$ be a resolution of singularities. If $R^{i}f_{*} \cO_{\tX} = 0$ for $1 \leq i \leq k$, then 
\begin{enumerate}
    \item $\cH^{i}(\underline{\Omega}_{X}^{0}) = 0$ for $1 \leq i \leq k$;\\

    \item the natural map $\cH^{k+1}(\underline{\Omega}_{X}^{0}) \rightarrow R^{k+1}f_{*}\cO_{\tX}$  is injective.
\end{enumerate}
\end{thm}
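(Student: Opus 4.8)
The plan is to reduce the global vanishing statement about the cohomology sheaves $\cH^i(\underline{\Omega}_X^0)$ to the local cohomology computation furnished by the preceding lemma, and then to use the injectivity result of Kov\'acs--Schwede together with the hypothesis $R^if_*\cO_{\tX}=0$ to pin down both conclusions. First I would observe that the statement is local on $X$, so I may work in the stalk at an arbitrary (not necessarily closed) point $p\in X$ and check that $\cH^i(\underline{\Omega}_X^0)_p=0$ for $1\le i\le k$. The key input is the factorization
$$\cO_{X,p}\to (\underline{\Omega}^0_X)_p\to (\bR f_*\cO_{\tX})_p,$$
coming from property (2) of the Du Bois complex together with the canonical map to the pushforward from the resolution.

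Next I would compare the three objects via local cohomology $\bR\underline{\Gamma}_p$. By the immediately preceding lemma, the hypothesis $R^if_*\cO_{\tX}=0$ for $1\le i\le k$ gives the isomorphism
$$\cH^i(\bR\underline{\Gamma}_p(\cO_{X,p}))\xrightarrow{\sim}\cH^i(\bR\underline{\Gamma}_p((\underline{\Omega}^0_X)_p))\quad\text{for }0\le i\le k.$$
The idea is to feed this back into a local-cohomology long exact sequence for the cone of $\cO_{X,p}\to(\underline{\Omega}^0_X)_p$. Writing $Q$ for this cone, normality gives $\cH^0(Q)=0$ by Lemma \ref{lemma:dBtorsionfree}, and the lemma just quoted forces $\bR\underline{\Gamma}_p(Q)$ to have vanishing cohomology in degrees $\le k$. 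Since $\cO_{X,p}$ has depth $\ge 2$ at singular points and the two complexes already agree on the smooth locus, an argument analogous to the depth argument in Lemma \ref{lemma:dBtorsionfree} should let me transfer the vanishing of local cohomology of $Q$ into the vanishing of the sheaf cohomology $\cH^i(\underline{\Omega}^0_X)=\cH^i(Q)$ for $1\le i\le k$, yielding (1).

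For part (2), I would run the same local-cohomology comparison one degree higher. In degree $k+1$ the isomorphism of the preceding lemma is no longer available, but the Kov\'acs--Schwede injectivity (equivalently, the surjectivity of $\cH^i(\bR\underline{\Gamma}_p(\cO_{X,p}))\to\cH^i(\bR\underline{\Gamma}_p((\underline{\Omega}^0_X)_p))$ recorded before the last lemma) controls the boundary map. Combined with the factorization through $(\bR f_*\cO_{\tX})_p$, whose degree-$(k+1)$ cohomology is $R^{k+1}f_*\cO_{\tX}$, this should exhibit $\cH^{k+1}(\underline{\Omega}^0_X)$ as a subsheaf of $R^{k+1}f_*\cO_{\tX}$ via the natural map, giving injectivity.

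The main obstacle I anticipate is the passage from stalkwise local-cohomology vanishing to the vanishing of the cohomology \emph{sheaf} $\cH^i(\underline{\Omega}^0_X)$ itself. One must be careful that $\bR\underline{\Gamma}_p$ is taken at the generic point of each irreducible closed subset rather than only at closed points, and that the depth/normality bookkeeping (via Lemma \ref{depthlemma}) is uniform enough to conclude sheaf-level vanishing from the pointwise statements; this is where I would spend the most care, likely invoking Lemma \ref{AssLemma} to rule out associated primes of the putative nonzero cohomology sheaf.
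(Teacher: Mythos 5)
Your strategy---localize at every (not necessarily closed) point $p$, compare the local cohomologies of $\cO_{X,p}$, $(\underline{\Omega}_{X}^{0})_{p}$, and $(\bR f_{*}\cO_{\tX})_{p}$, and return to the sheaf level via Lemma \ref{AssLemma} applied at associated points---is sound, and it is essentially the paper's argument recast with cones and long exact sequences in place of the paper's morphism of local-cohomology spectral sequences. The transfer step you flag as the main obstacle is in fact fine: if $b_{0}$ is minimal with $\cH^{b_{0}}(Q) \neq 0$ and $p$ is a generic point of $\supp \cH^{b_{0}}(Q)$, the spectral sequence $\cH^{a}(\bR\underline{\Gamma}_{p}(\cH^{b}(Q)_{p})) \Rightarrow \cH^{a+b}(\bR\underline{\Gamma}_{p}(Q_{p}))$ gives $\cH^{b_{0}}(\bR\underline{\Gamma}_{p}(Q_{p})) \cong \underline{\Gamma}_{p}(\cH^{b_{0}}(Q)_{p}) \neq 0$, which is the contradiction you want. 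The genuine gap is earlier, in the claim that ``the lemma just quoted forces $\bR\underline{\Gamma}_{p}(Q)$ to have vanishing cohomology in degrees $\leq k$.'' Write $\alpha_{i}\colon \cH^{i}(\bR\underline{\Gamma}_{p}(\cO_{X,p})) \to \cH^{i}(\bR\underline{\Gamma}_{p}((\underline{\Omega}_{X}^{0})_{p}))$. The long exact sequence for $Q$ shows that $\cH^{i}(\bR\underline{\Gamma}_{p}(Q)) = 0$ exactly when $\alpha_{i}$ is surjective and $\alpha_{i+1}$ is injective; since $\alpha_{k}$ is onto, one gets $\cH^{k}(\bR\underline{\Gamma}_{p}(Q)) \cong \ker \alpha_{k+1}$. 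The preceding lemma gives isomorphisms only in degrees $\leq k$, and Kov\'acs--Schwede gives surjectivity, never injectivity, of $\alpha_{k+1}$, so neither input you cite controls $\ker \alpha_{k+1}$. As written, your argument therefore proves part (1) only in the range $1 \leq i \leq k-1$, one degree short of the theorem.

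The repair uses your ``key input'' factorization in an essential way, and it is precisely where the hypothesis must be fed in once more. Since $X$ is normal, $f_{*}\cO_{\tX} = \cO_{X}$, so the hypothesis says $\tau^{\leq k}\bR f_{*}\cO_{\tX} \cong \cO_{X}$; because the third vertex of the triangle $\tau^{\leq k}\bR f_{*}\cO_{\tX} \to \bR f_{*}\cO_{\tX} \to \tau^{>k}\bR f_{*}\cO_{\tX}$ has cohomology only in degrees $> k$, the composite $\cH^{k+1}(\bR\underline{\Gamma}_{p}(\cO_{X,p})) \to \cH^{k+1}(\bR\underline{\Gamma}_{p}((\bR f_{*}\cO_{\tX})_{p}))$ is injective, and hence so is $\alpha_{k+1}$. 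This closes the gap in part (1). (Compare with the paper: there the hypothesis enters by making the spectral-sequence differentials $d^{0,i}$ vanish, since they factor through $\underline{\Gamma}_{p}((R^{i}f_{*}\cO_{\tX})_{p}) = 0$.) Part (2) of your sketch can be made precise along the same lines: writing $\beta_{i}$ for the maps $\cH^{i}(\bR\underline{\Gamma}_{p}((\underline{\Omega}_{X}^{0})_{p})) \to \cH^{i}(\bR\underline{\Gamma}_{p}((\bR f_{*}\cO_{\tX})_{p}))$, once part (1) is known the kernel of $\cH^{k+1}(\underline{\Omega}_{X}^{0}) \to R^{k+1}f_{*}\cO_{\tX}$ is identified with $\cH^{k}$ of the cone of $\underline{\Omega}_{X}^{0} \to \bR f_{*}\cO_{\tX}$, and its degree-$k$ local cohomology vanishes at every point because $\beta_{k}$ is an isomorphism and $\beta_{k+1}$ is injective (the latter combines the injectivity of the composite above with the Kov\'acs--Schwede surjectivity of $\alpha_{k+1}$); Lemma \ref{AssLemma} at a generic point of the support then finishes exactly as in part (1).
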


\begin{proof}
    If $p \in X$, there is a morphism of spectral sequences
    $$\begin{tikzcd}
        E^{ji}_{2} = \cH^{j}(\bR \underline{\Gamma_{p}}(\cH^{i}(\Om)_{p})\arrow[r, Rightarrow] \arrow[d] & \cH^{i +j}(\bR \underline{\Gamma_{p}}((\underline{\Omega}_{X}^{0})_{p}))  \arrow[d] \\
       ^{'}E^{ji}_{2}= \cH^{j}(\bR \underline{\Gamma_{p}}(\cH^{i}(\bR f_{*}\cO_{\tX})_{p})  \arrow[r, Rightarrow] &  \cH^{i +j}(\bR \underline{\Gamma_{p}}((\bR f_{*}\cO_{\tX})_{p})).
    \end{tikzcd} $$
For $i = 1$, there is a commutative diagram 
    $$ \begin{tikzcd}
        \underline{\Gamma_{p}}(\cH^{1}(\Om)_{p})\arrow[r, "d^{01}_{2}"] \arrow[d] &   \cH^{2}(\bR \underline{\Gamma_{p}}(\cO_{X,p})) \arrow[d,"id"] \\
        \underline{\Gamma_{p}}((R^{1}f_{*}\cO_{\tX})_{p})  \arrow[r, "^{'}d^{01}_{2}"] &  \cH^{2}(\bR \underline{\Gamma_{p}}(\cO_{X,p})).
    \end{tikzcd} $$
    Since $R^{1}f_{*}\cO_{\tX} = 0$, the map 
    $$\begin{tikzcd}
        \underline{\Gamma_{p}}(\cH^{1}(\Om)_{p})\arrow[r, "d^{01}_{2}"]  & \cH^{2}(\bR \underline{\Gamma_{p}}(\cO_{X,p}))
    \end{tikzcd} $$
    is the zero map. Therefore, $E^{01}_{\infty} \cong \underline{\Gamma_{p}}(\cH^{1}(\Om)_{p})$. But, by the previous lemma, we also have $$E^{10}_{\infty} \cong \cH^{1}(\bR \underline{\Gamma_{p}}(\cO_{X,p})) \cong  \cH^{1}(\bR\underline{ \Gamma_{p}}((\underline{\Omega}_{X}^{0})_{p})). $$
     So, we can conclude that $0 = E^{01}_{\infty} \cong \underline{\Gamma_{p}}(\cH^{1}(\Om)_{p})$. But since $\underline{\Gamma_{p}}(\cH^{1}(\Om)_{p}) = 0$ for all $p \in X$, we can conclude $\cH^{1}(\Om) = 0$ by Lemma \ref{AssLemma}.

      Since we have shown $\cH^{1}(\Om) = 0$, we have $E^{02}_{2} = E^{02}_{3}.$ There is a commutative diagram 
    $$ \begin{tikzcd}
        \underline{\Gamma_{p}}(\cH^{2}(\Om)_{p})\arrow[r, "d^{02}_{3}"] \arrow[d] &   \cH^{3}(\bR \underline{\Gamma_{p}}(\cO_{X,p})) \arrow[d,"id"] \\
        \underline{\Gamma_{p}}((R^{2}f_{*}\cO_{\tX})_{p})  \arrow[r, "^{'}d^{02}_{3}"] &  \cH^{3}(\bR \underline{\Gamma_{p}}(\cO_{X,p})).
    \end{tikzcd} $$
    Since $R^{2}f_{*}\cO_{\tX} = 0$, the map 
    $$\begin{tikzcd}
        \underline{\Gamma_{p}}(\cH^{2}(\Om)_{p})\arrow[r, "d^{01}_{2}"]  & \cH^{3}(\bR \underline{\Gamma_{p}}(\cO_{X,p}))
    \end{tikzcd} $$
    is the zero map. Therefore, $E^{02}_{\infty} \cong \underline{\Gamma_{p}}(\cH^{2}(\Om)_{p})$. But, by the previous lemma, we also have $$E^{20}_{\infty} \cong \cH^{2}(\bR \underline{\Gamma_{p}}(\cO_{X,p})) \cong  \cH^{2}(\bR\underline{ \Gamma_{p}}((\underline{\Omega}_{X}^{0})_{p})). $$
     So, we can conclude that $0 = E^{02}_{\infty} \cong \underline{\Gamma_{p}}(\cH^{2}(\Om)_{p})$. But since $\underline{\Gamma_{p}}(\cH^{2}(\Om)_{p}) = 0$ for all $p \in X$, we can conclude $\cH^{2}(\Om) = 0$ by Lemma \ref{AssLemma}. Continuing in this pattern establishes $(1).$

      To prove $(2)$, let $p \in X$ and notice that 
      $$E^{0,k+1}_{\infty} = \ker \bigg( \underline{\Gamma_{p}}(\cH^{k+1}(\Om)_{p})\xrightarrow{d^{0,k+1}_{k+2}} \cH^{k+2}(\bR \underline{\Gamma_{p}}(\cO_{X,p})) \bigg) $$
      $$E^{k+1,0}_{\infty} \cong \cH^{k+1}(\bR \underline{\Gamma_{p}}(\cO_{X,p})).$$
      Since $\cH^{k+1}(\bR \underline{\Gamma_{p}}(\cO_{X,p}))\rightarrow  \cH^{k+1}(\bR \underline{\Gamma_{p}}( (\underline{\Omega}_{X}^{0})_{p}))$ is surjective, the map must be an isomoprhism because $E^{k+1,0}_{\infty} \cong \cH^{k+1}(\bR \underline{\Gamma_{p}}(\cO_{X,p})).$ Hence, we have 
       $$E^{0,k+1}_{\infty} = \ker \bigg( \underline{\Gamma_{p}}(\cH^{k+1}(\Om)_{p})\xrightarrow{d^{0,k+1}_{k+2}} \cH^{k+2}(\bR \underline{\Gamma_{p}}(\cO_{X,p})) \bigg) = 0. $$
       From the commutative diagram 
      $$ \begin{tikzcd}
        \underline{\Gamma_{p}}(\cH^{k+1}(\Om)_{p})\arrow[r, "d^{0 k+1}_{k+2}"] \arrow[d] &   \cH^{k+2}(\bR \underline{\Gamma_{p}}(\cO_{X,p})) \arrow[d,"id"] \\
        \underline{\Gamma_{p}}((R^{k+1}f_{*}\cO_{\tX})_{p})  \arrow[r, "^{'}d^{0 k+1}_{k+2}"] &  \cH^{k+2}(\bR \underline{\Gamma_{p}}(\cO_{X,p})),
    \end{tikzcd} $$
        there is an inclusion
        $$\ker \bigg( \underline{\Gamma_{p}}(\cH^{k+1}(\Om)_{p}) \rightarrow   \underline{\Gamma_{p}}((R^{k+1}f_{*}\cO_{\tX})_{p}) \bigg) \subseteq \ker \bigg( \underline{\Gamma_{p}}(\cH^{k+1}(\Om)_{p})\xrightarrow{d^{0,k+1}_{k+2}} \cH^{k+2}(\bR \underline{\Gamma_{p}}(\cO_{X,p})) \bigg) = 0. $$
      Since this holds for all $p \in X$, the morphism $\cH^{k+1}(\underline{\Omega}_{X}^{0}) \rightarrow R^{k+1}f_{*}\cO_{\tX}$  is injective.

\end{proof}

\begin{cor}
    If $X$ is a normal variety and  $\tilde{S}$ is the the smallest closed subset of $X$ such that $X \backslash \tilde{S}$ has rational singularities, then 
    $$\cH^{i}(\underline{\Omega}_{X}^{0}) = 0 \quad \text{for $1 \leq i \leq \depth_{\tilde{S}}(\cO_{X}) -2.$}$$
    In particular, if $X$ has rational singularities, then $X$ has Du Bois singularities.
\end{cor}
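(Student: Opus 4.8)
The plan is to combine Corollary \ref{rat.rmk} with Theorem \ref{VanishingThm}; no genuinely new argument is needed beyond assembling these two results. First I would set $k = \depth_{\tilde{S}}(\cO_{X}) - 2$. By Corollary \ref{rat.rmk}, this choice of $k$ guarantees that $R^{i}f_{*}\cO_{\tX} = 0$ for $1 \leq i \leq k$, where $f: \tX \rightarrow X$ is any resolution of singularities. Since $X$ is normal by hypothesis, the hypotheses of Theorem \ref{VanishingThm} are met with this value of $k$.

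Applying part (1) of Theorem \ref{VanishingThm} then yields $\cH^{i}(\underline{\Omega}_{X}^{0}) = 0$ for $1 \leq i \leq k = \depth_{\tilde{S}}(\cO_{X}) - 2$, which is exactly the first assertion. So the whole content of the first statement is the observation that the vanishing range for $R^{i}f_{*}\cO_{\tX}$ supplied by Corollary \ref{rat.rmk} feeds directly into the vanishing range for $\cH^{i}(\underline{\Omega}_{X}^{0})$ supplied by Theorem \ref{VanishingThm}.

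For the \emph{in particular} clause, suppose $X$ has rational singularities. Then $X$ is normal, so Lemma \ref{lemma:dBtorsionfree} gives $\cO_{X} = \cH^{0}(\underline{\Omega}_{X}^{0})$. Moreover rational singularities force $R^{i}f_{*}\cO_{\tX} = 0$ for \emph{all} $i \geq 1$; equivalently one may take $\tilde{S} = \emptyset$, in which case $\depth_{\tilde{S}}(\cO_{X}) = +\infty$ under the convention that the infimum over the empty set is $+\infty$. Applying Theorem \ref{VanishingThm}(1) for every $k$ (or the first assertion with this infinite bound) gives $\cH^{i}(\underline{\Omega}_{X}^{0}) = 0$ for all $i \geq 1$. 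Combining this with $\cO_{X} = \cH^{0}(\underline{\Omega}_{X}^{0})$ shows that the natural map $\cO_{X} \rightarrow \underline{\Omega}_{X}^{0}$ is a quasi-isomorphism, which is precisely the statement that $X$ is Du Bois.

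The argument is essentially bookkeeping, so there is no substantial obstacle; the only points requiring slight care are the degenerate case $\tilde{S} = \emptyset$ in the rational-singularities statement and the identification of the Du Bois condition with the conjunction of $\cO_{X} \cong \cH^{0}(\underline{\Omega}_{X}^{0})$ and the vanishing of $\cH^{i}(\underline{\Omega}_{X}^{0})$ for $i \geq 1$.
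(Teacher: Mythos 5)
Your proposal is correct and follows exactly the paper's route: Corollary \ref{rat.rmk} supplies the vanishing $R^{i}f_{*}\cO_{\tX}=0$ in the stated range, Theorem \ref{VanishingThm}(1) converts this into the vanishing of $\cH^{i}(\underline{\Omega}_{X}^{0})$, and the Du Bois statement follows by combining this with Lemma \ref{lemma:dBtorsionfree}. The paper's own proof is just a terser version of the same assembly, so there is nothing to add.
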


\begin{proof}
 The first statement follows from the theorem. The second statement follows by combining this with Lemma \ref{lemma:dBtorsionfree}.
\end{proof}

The last statement was originally due to Kov\'acs \cite{kovacs} and Saito \cite{saito5}.

\begin{rmk}
    By Lemma \ref{depthlemma}, if $\tilde{S}$ is the smallest closed subset of $X$ such that $X \backslash \tilde{S}$ has rational singularities, and $n_{\tilde{S}} = \dim \tilde{S}$, then 
    $$\cH^{i}(\underline{\Omega}_{X}^{0}) = 0 \quad \text{for $1 \leq i \leq \depth(\cO_{X})-n_{\tilde{S}} -2.$}$$
    If we set $S_{db}$ to be the smallest closed subset of $X$ such that $X \backslash S_{db}$ has Du Bois singularities, and $n_{S_{db}} = \dim S_{db}$, then it was shown by Popa, Shen, and Vo \cite{PopaShenVo} that we have the following better result
    $$\cH^{i}(\underline{\Omega}_{X}^{0}) = 0 \quad \text{for $1 \leq i \leq \depth(\cO_{X})-n_{S_{db}} -2.$}$$
\end{rmk}

\section{The Hodge filtrations}

In this section, we will work in the analytic category. A variety $X$ will be treated as the analytic space $X^{an}$.
Given an $n$-dimensional algebraic variety $X$, $S$ will be the singular locus with dimension $n_{S}$. If $U$ is the smooth locus with inclusion map $j: U \hookrightarrow X$, let
$$IH^i_c(X, \Q) = H_c^{i-n}(X, j_{!*} \Q[n]).$$
Saito has constructed a mixed Hodge structure on this space. We will say more about this below.
The main result for this section is the following theorem.

\begin{thm}\label{HodgeThm}
    Suppose that $X$ is a normal quasi-projective variety, and let $f: \tX \rightarrow X$ be a resolution of singularities. 
  For all $i \in \Z$, there are natural isomorphisms
$$Gr^{0}_{F}IH^{i}_{c}(X, \C) \cong Gr^{0}_{F}H^{i}_{c}(\tX, \C) $$
$$Gr^{i}_{F}IH^{i}_{c}(X, \C) \cong Gr^{i}_{F}H^{i}_{c}(\tX, \C).$$
    If $R^{i}f_{*} \cO_{\tX} = 0$ for $1 \leq i \leq k$, then the natural maps
    $$Gr^{0}_{F}H^{i}_{c}(X, \C) \rightarrow Gr^{0}_{F}IH^{i}_{c}(X, \C)$$
    $$Gr^{i}_{F}H^{i}_{c}(X, \C) \rightarrow Gr^{i}_{F}IH^{i}_{c}(X, \C)$$
    are isomorphisms for $0 \leq i \leq k$ and injective for $i = k+1$.
\end{thm}

\begin{cor}
 If $X$ is normal and $R^{i}f_{*} \cO_{\tX} = 0$ for $1 \leq i \leq k$ the natural maps
    $$Gr^{0}_{F}H^{i}_{c}(X, \C) \rightarrow Gr^{0}_{F}H^{i}_{c}(\tX, \C)$$
$$Gr^{i}_{F}H^{i}_{c}(X, \C) \rightarrow Gr^{i}_{F}H^{i}_{c}(\tX, \C)$$
    are isomorphisms for $0 \leq i \leq k$ and injective for $i = k+1.$ 
\end{cor}

We will postpone this until after we introduce some basic constructions and lemmas. 
Given a smooth variety $X$, the Poincar\'e lemma gives an isomorphism
$$ \C_X\simeq  \Omega_X^\bullet. $$
The stupid filtration on the right induces a filtration on $H^*(X,\C)$, which we call the {\em de Rham filtration}. This need not coincide with the Hodge filtration unless $X$ is proper.
  Furthermore, the associated spectral sequence need not degenerate in general.
If $X$ is singular, we define the  de Rham filtration on $H^{i}(X, \C)$ and $H^{i}_{c}(X, \C)$  by
     $$^{d}F^{p}H^{i}(X, \C) = \im\bigg[H^{i-n}(X, F^{p}(\underline{\Omega}^{\bullet}_{X}[n] )) \rightarrow H^{i}(X, \C) \bigg]$$

     $$^{d}F^{p}H^{i}_{c}(X, \C) = \im\bigg[H^{i-n}_{c}(X, F^{p}(\underline{\Omega}^{\bullet}_{X}[n] )) \rightarrow H^{i}_{c}(X, \C) \bigg].$$

It will be convenient to extend this notion to mixed Hodge modules. To facilitate this,
choose an open embedding $\iota: X \hookrightarrow X'$ with $X'$ projective, and let $\rho: X' \hookrightarrow \cX$ be any closed embedding, where $\cX$ is a smooth projective variety of dimension $m$. We also denote $d = m - n.$ The composition of these maps will be denoted as $i_{X}: X \hookrightarrow \cX.$  There exists an open subset $Y \subset \cX$ such that $i:X \hookrightarrow Y$ is a closed embedding, and there is a commutative diagram
   $$\adjustbox{scale=1.25}{\bT X \arrow[r, "\iota"] \arrow[d, "i"] \arrow[dr, "i_{X}"] & X' \arrow[d, "\rho"] \\
         Y \arrow[r, "j"] & \cX. \eT}$$ 
    For any  object  $\cM \in D^{b}MHM(X)$ in the derived category of mixed Hodge modules, by \cite{saito} \cite{saito2} $H^{i}(X, DR(\cM))$ and $H^{i}_{c}(X, DR(\cM))$ have a mixed Hodge structure. By construction, there are filtered complexes
    $$(DR(i_{+}\cM), F_{\bullet}(DR(i_{+}\cM)))$$ 
    $$(DR(i_{X+}\cM), F_{\bullet}(DR(i_{X+}\cM)))$$
     $$(DR(i_{X!}\cM), F_{\bullet}(DR(i_{X!}\cM))).$$
     We can construct the Hodge and de Rham filtrations on $H^{i}(X, DR(\cM))$ and $H^{i}_{c}(X, DR(\cM))$ in the same way as above. The Hodge filtrations are as follows,
     $$F^{p}H^{i}(X, DR(\cM)) := \im\bigg[H^{i}(\cX,  F_{-p}(DR(i_{X+}\cM))) \rightarrow H^{i}(X, DR(\cM)) \bigg]$$

    $$F^{p}H^{i}_{c}(X, DR(\cM)) := \im\bigg[H^{i}(\cX,  F_{-p}DR(i_{X!}\cM)) \rightarrow H^{i}_{c}(X, DR(\cM)) \bigg].$$
    
    We define the de Rham filtration

    $$^{d}F^{p}H^{i}(X, DR(\cM)) := \im\bigg[H^{i}(Y, F_{-p}DR(i_{+}\cM)) \rightarrow H^{i}(X, DR(\cM)) \bigg]$$

     $$^{d}F^{p}H^{i}_{c}(X, DR(\cM)) := \im\bigg[H^{i}_{c}(Y, F_{-p}DR(i_{+}\cM)) \rightarrow H^{i}_{c}(X, DR(\cM)) \bigg].$$
     (We expect that this filtration is well defined, but we have not checked it since we will not need it.)
     
The Hodge filtration is strict, so the corresponding spectral sequence degenerates at $E_{1}$. However, the de Rham filtration may not be strict. So, the associated  sequences,
\begin{equation}\label{eq:deRham}
\begin{split}
   E^{p, i -p}_{1} = H^{i}(Y, Gr^{F}_{-p}DR(i_{+}\cM)) =H^{i}(X, Gr^{F}_{-p}DR(\cM))& \Rightarrow \hspace{.01in}^{d}Gr_{F}^{p}H^{i}(X, DR(\cM))\\
   E^{p, i -p}_{1} = H^{i}_{c}(Y, Gr^{F}_{-p}DR(i_{+}\cM)) =H^{i}_{c}(X, Gr^{F}_{-p}DR(\cM)) &\Rightarrow \hspace{.01in}^{d}Gr_{F}^{p}H^{i}_{c}(X, DR(\cM))
\end{split}
\end{equation}
    need not degenerate.

 \begin{rmk}
     If $\cM = \Q^{H}_{X}[n]$, then by  \cite{saito5}, the two filtered complexes
     $$F_{\bullet}DR(i_{+}\Q^{H}_{X}[n]) \quad \text{and} \quad i_{*}(F^{\bullet}(\underline{\Omega}_{X}^{\bullet}[n]))$$
     coincide. For each $p \in \Z,$
    $$Gr^{F}_{-p}DR(i_{+}\Q^{H}_{X}[n]) \simeq i_{*}(Gr^{p}_{F}\underline{\Omega}_{X}^{\bullet}[n]) = i_{*}(\underline{\Omega}^{p}_{X}[n-p]).$$
  
 \end{rmk}

  \begin{prop}
    For every $p \in \Z$, there exists  natural injective maps $$F^{p}H^{i}(X, DR(\cM)) \hookrightarrow \hspace{.01in} ^{d}F^{p}H^{i}(X, DR(\cM))$$
    $$^{d}F^{p}H^{i}_{c}(X, DR(\cM)) \hookrightarrow F^{p}H^{i}_{c}(X, DR(\cM)).$$
\end{prop}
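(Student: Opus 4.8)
The plan is to exploit the two factorizations of the embedding of $X$ into the smooth projective $\cX$, namely $i_{X+}\cM \simeq j_+ i_+\cM$ and $i_{X!}\cM \simeq j_! i_+\cM$ (here $i_+ = i_!$ since $i$ is a closed embedding), together with the fact that the de Rham functor turns $j_+$ into $\bR j_*$ and $j_!$ into extension by zero. Since both filtrations are defined as images of the hypercohomology of the filtered pieces $F_{-p}DR(\,\cdot\,)$ inside $H^i(X, DR(\cM))$ (resp. $H^i_c(X, DR(\cM))$), the whole argument reduces to producing, for each $p$, a single commutative square relating the ``$\cX$-picture'' used for the Hodge filtration with the ``$Y$-picture'' used for the de Rham filtration, in which the map on the full (unfiltered) de Rham complexes is an isomorphism. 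Throughout I use that $\cX$ is proper, so ordinary and compactly supported hypercohomology on $\cX$ coincide.

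For the ordinary cohomology statement I would apply the adjunction unit $\mathrm{id} \to \bR j_* j^*$ to the filtered complex $F_{-p}DR(i_{X+}\cM)$ on $\cX$. Because $j$ is an open embedding, $j^*$ is exact and strictly compatible with the Hodge filtration, so $j^* F_{-p}DR(i_{X+}\cM) = F_{-p}DR(j^* i_{X+}\cM) = F_{-p}DR(i_+\cM)$; taking $H^i(\cX,-)$ then yields the restriction map $r\colon H^i(\cX, F_{-p}DR(i_{X+}\cM)) \to H^i(Y, F_{-p}DR(i_+\cM))$. Naturality of this restriction with respect to the inclusion $F_{-p}DR \hookrightarrow DR$ gives a commutative square whose horizontal maps compute $F^pH^i$ (top) and $^{d}F^pH^i$ (bottom). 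On the full complexes the vertical restriction map is the identity of $H^i(X, DR(\cM))$ under the identifications $DR(i_{X+}\cM) \simeq \bR j_* DR(i_+\cM)$ and $DR(i_+\cM) \simeq i_* DR(\cM)$. Chasing the square then gives $F^pH^i(X, DR(\cM)) = \im(\text{top}) \subseteq \im(\text{bottom}) = {}^{d}F^pH^i(X, DR(\cM))$.

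For compact supports the roles of $\cX$ and $Y$ are reversed, and I would instead use the counit $j_! j^* \to \mathrm{id}$ applied to $F_{-p}DR(i_{X!}\cM)$. Since $\cX$ is proper, $H^i(\cX, j_!(-)) = H^i_c(Y, -)$, so the counit produces a natural map $s\colon H^i_c(Y, F_{-p}DR(i_+\cM)) \to H^i(\cX, F_{-p}DR(i_{X!}\cM))$, again fitting into a commutative square with the inclusions into the full complexes. On the full complexes the counit $j_! j^* DR(i_{X!}\cM) \to DR(i_{X!}\cM)$ is an isomorphism (both sides equal $j_! DR(i_+\cM)$), so the relevant vertical map is the identity of $H^i_c(X, DR(\cM))$. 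The same diagram chase now yields the reversed inclusion $^{d}F^pH^i_c(X, DR(\cM)) = \im(\text{top}) \subseteq \im(\text{bottom}) = F^pH^i_c(X, DR(\cM))$.

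The step I expect to require the most care is the compatibility of the adjunction unit and counit with the Hodge filtration: one must know that the restriction $j^*$ and the extension $j_!$ act on Saito's \emph{filtered} de Rham complexes so that $j^* F_{-p}DR(i_{X+}\cM) = F_{-p}DR(i_+\cM) = j^* F_{-p}DR(i_{X!}\cM)$, and that $\mathrm{id}\to \bR j_* j^*$ and $j_! j^* \to \mathrm{id}$ are morphisms in the filtered derived category, not merely in the underlying constructible derived category. This is exactly the point where Saito's formalism \cite{saito}, \cite{saito2} must be invoked; granting it, the injectivity of both maps is immediate, since each map is realized as an inclusion of subspaces of a fixed cohomology group.
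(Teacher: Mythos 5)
Your proposal is correct and coincides with the paper's own argument: the paper's ``natural morphisms'' $F_{-p}DR(i_{X+}\cM)\to \bR j_*(F_{-p}DR(i_+\cM))$ and $\bR j_!(F_{-p}DR(i_+\cM))\to F_{-p}DR(i_{X!}\cM)$ are precisely the adjunction unit and counit you construct (using $j^*F_{-p}DR(i_{X+}\cM)=F_{-p}DR(i_+\cM)=j^*F_{-p}DR(i_{X!}\cM)$), and the proof concludes with the same commutative squares over the identity on $H^i(X,DR(\cM))$ (resp. $H^i_c$) and the same containment of images.
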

\begin{proof}

 For $\cM \in D^{b}MHM(X)$, the inclusion map $j:Y \hookrightarrow \cX$ induces quasi-isomorphisms
 $$F_{-p}(DR(i_{X+}\cM))\vert_{Y} \simeq (F_{-p}DR(i_{+}\cM))$$
 $$F_{-p}(DR(i_{X!}\cM))\vert_{Y}  \simeq (F_{-p}DR(i_{+}\cM)).$$
 The adjunction maps induce natural morphisms
    $$F_{-p}(DR(i_{X+}\cM)) \rightarrow \bR j_{*} (F_{-p}DR(i_{+}\cM))$$
    $$\bR j_{!}(F_{-p}DR(i_{+}\cM)) \rightarrow F_{-p}(DR(i_{X!}\cM)).$$
    Thus, there are commutative diagrams
    $$\begin{tikzcd}
   H^{i}(\cX,  F_{-p}DR(i_{X+}\cM)) \arrow[r] \arrow[d] & H^{i}(X, DR(\cM)) \arrow[d, "id"] \\
   H^{i}(Y, F_{-p}DR(i_{+}\cM)) \arrow[r] & H^{i}(X, DR(\cM)) \end{tikzcd}$$
    $$\begin{tikzcd}
   H^{i}_{c}(Y, (F_{-p}DR(i_{+}\cM))) \arrow[r] \arrow[d] & H^{i}_{c}(X, DR(\cM)) \arrow[d, "id"]\\
   H^{i}(\cX,  F_{-p}DR(i_{X!}DR(\cM))) \arrow[r]  & H^{i}_{c}(X, DR(\cM)).
    \end{tikzcd}$$
    Therefore, 
   $$F^{p}H^{i}(X, DR(\cM)) = \im\bigg[H^{i}(\cX,  F_{-p}DR(i_{X+}DR(\cM))) \rightarrow H^{i}(X, DR(\cM)) \bigg]$$
   is a subset of 
   $$^{d}F^{p}H^{i}(X, DR(\cM)) = \im\bigg[H^{i}(Y,  (F_{-p}DR(i_{+}\cM))) \rightarrow H^{i}(X, DR(\cM)) \bigg].$$
   Similarly, for cohomology with compact support.
    \end{proof}

   \begin{defn}
       For an irreducible variety $X$ of dimension $n$,
       $$IH^{i}_{c}(X, \Q):= H^{i-n}_{c}(X, rat(IC^{H}_{X}))$$
       $$IH^{i}(X, \Q):= H^{i-n}(X, rat(IC^{H})).$$
       Also,
       $$ IH^{i}_{c}(X, \Q) \otimes \C \cong IH^{i}_{c}(X, \C) := H^{i-n}_{c}(X, DR(IC^{H}_{X}))$$
       $$IH^{i}(X, \Q) \otimes \C \cong IH^{i}(X, \C) := H^{i-n}(X, DR(IC^{H}_{X})),$$
       where $IC^{H}_{X}$ is the Hodge module corresponding to intersection cohomology.
   \end{defn}

   \begin{lemma}\label{WeightLemma}
        If $X$ is purely $n$-dimension, then 
       $$Gr^{W}_{j}H^{i}_{c}(X, \Q) = Gr^{W}_{j}IH^{i}_{c}(X, \Q) =  0 \quad \text{for $j>i$.}$$
   \end{lemma}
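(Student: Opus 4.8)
The plan is to deduce both vanishing statements directly from Saito's weight formalism, the key input being that the pushforward with proper support does not raise weights. Write $a_X : X \to \mathrm{pt}$ for the structure map; in the notation fixed above, $a_X = a_{\cX}\circ i_X$ with $a_{\cX} : \cX \to \mathrm{pt}$, and since $\cX$ is projective we have $(a_{\cX})_* = (a_{\cX})_!$. Consequently, for any $\cM \in D^{b}MHM(X)$ the mixed Hodge structure $H^{k}_{c}(X, DR(\cM))$ is computed as $\HH^{k}$ of the complex $a_{X!}\cM = (a_{\cX})_! \, i_{X!}\cM \in D^{b}MHM(\mathrm{pt}) = D^{b}(\mathrm{MHS})$. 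The only two facts I will use are: (i) if $\cM$ has weights $\le w$ then $a_{X!}\cM$ has weights $\le w$ (Saito's weight estimate for $f_!$), and (ii) by definition of the weight filtration on a complex over a point, $\HH^{k}$ of an object of weight $\le w$ is a mixed Hodge structure of weight $\le w+k$.

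First I would treat intersection cohomology, where the argument is cleanest. By Saito's purity theorem the Hodge module $IC^{H}_{X}$ is pure of weight $n$ for $X$ irreducible of dimension $n$; in particular it has weights $\le n$. Applying (i) to $a_X$ shows that $a_{X!}IC^{H}_{X}$ has weights $\le n$, and then (ii) together with the definition $IH^{i}_{c}(X,\C) = H^{i-n}_{c}(X, DR(IC^{H}_{X})) = \HH^{i-n}(a_{X!}IC^{H}_{X})$ gives that $IH^{i}_{c}(X,\C)$ has weights $\le n + (i-n) = i$. This is precisely $Gr^{W}_{j}IH^{i}_{c}(X,\C) = 0$ for $j > i$, and it is here that irreducibility enters, through the purity of $IC^{H}_{X}$.

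For ordinary compactly supported cohomology the argument is identical once the weight of the constant object is known. Taking $\cM = \Q^{H}_{X}[n]$, so that $DR(\cM)$ has underlying complex $\Q_X[n]$ and $H^{i-n}_{c}(X, DR(\cM)) = H^{i}_{c}(X,\C)$, it suffices to check that $\Q^{H}_{X}[n]$ has weights $\le n$, equivalently that $\Q^{H}_{X} = a_X^{*}\Q^{H}_{\mathrm{pt}}$ has weights $\le 0$. This follows from the companion to (i): the functor $a_X^{*}$ does not raise weights, and $\Q^{H}_{\mathrm{pt}}$ is pure of weight $0$. Then $a_{X!}(\Q^{H}_{X}[n])$ has weights $\le n$, and (ii) yields that $H^{i}_{c}(X,\C) = \HH^{i-n}(a_{X!}(\Q^{H}_{X}[n]))$ has weights $\le i$, i.e. $Gr^{W}_{j}H^{i}_{c}(X,\C) = 0$ for $j > i$.

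The mathematical content is entirely Saito's, so the work is bookkeeping rather than a genuine obstruction: I must pin down the weight and shift conventions so that the normalization in (ii) and the purity weight of $IC^{H}_{X}$ are compatible with the indexing $IH^{i}_{c}(X) = H^{i-n}_{c}(X, DR(IC^{H}_{X}))$ used in the paper, and I must cite the precise form of the weight-monotonicity of $f_!$ and $f^{*}$, and the purity of $IC^{H}_{X}$, from \cite{saito} \cite{saito2}. The one point I expect to require a sentence beyond a citation is the weight-$\le 0$ bound for the singular constant object $\Q^{H}_{X}$; I anticipate spelling out that this comes from $a_X^{*}$ preserving weights $\le$ rather than leaving it implicit, since this is the only place the estimate is not immediate from the proper-pushforward statement alone.
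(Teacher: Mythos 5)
Your proposal is correct and follows essentially the same route as the paper: both cases reduce to Saito's weight estimates, using purity of $IC^{H}_{X}$ (weight $n$) for intersection cohomology and the weight bound on $\Q^{H}_{X}[n]=(a_{X}^{*}\Q^{H}_{pt})[n]$ for ordinary compactly supported cohomology. The only difference is packaging: you invoke weight-monotonicity of $f_{!}$ at the level of complexes in $D^{b}MHM$, whereas the paper cites the corresponding estimate for a single mixed Hodge module \cite[Prop 2.26]{saito2} and then handles the complex $\Q^{H}_{X}[n]$ by d\'evissage over the truncation triangle $\tau^{\leq -1}\Q^{H}_{X}[n] \rightarrow \Q^{H}_{X}[n] \rightarrow \cH^{0}(\Q^{H}_{X}[n]) \xrightarrow{+1}$, a step that your complex-level citation absorbs.
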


   \begin{proof}
      If $a_{X}:X \rightarrow \{pt\}$, then $\Q^{H}_{X}[n]= (a_{X}^{*}\Q^{H}_{pt})[n]$ and $Gr^{W}_{n}\cH^{0}(\Q^{H}_{X}[n]) = \bigoplus IC^{H}_{X_{\alpha}}$, where $X_{\alpha}$ are the irreducible components. By \cite{saito2},
       $$\cH^{i}(\Q^{H}_{X}[n]) = 0 \quad \text{for $i > 0$}$$
       $$Gr^{W}_{j}\cH^{i}(\Q^{H}_{X}[n]) = 0 \quad \text{for $j >i +n$.}$$
          Now, by \cite[Prop 2.26]{saito2}, if $\cM$ is a mixed Hodge module such that $Gr^{W}_{j}\cM = 0$ for $j > m$, then
       $$Gr^{W}_{j}H^{i-m}_{c}(X, rat(\cM)) = 0 \quad  \text{for $j > i.$}$$
       Since $Gr^{W}_{j}IC^{H}_{X_{\alpha}} = 0$ for $j > n$ and for each $\alpha$, we obtain
       $$Gr^{W}_{j}IH^{i}_{c}(X, \Q) = \bigoplus_{\alpha}Gr^{W}_{j}IH^{i}_{c}(X_{\alpha}, \Q)= 0 \quad \text{ for $j > i$.}$$
      For the last claim, because the weight $\Q^{H}_{X}[n]$ is less than equal to $n$, i.e.
      $$Gr^{W}_{j}\cH^{i}(\Q^{H}_{X}[n]) = 0 \quad \text{for $j >i +n$,}$$
      by \cite[(4.2.5)]{saito2},
      $$Gr^{W}_{j}\cH^{i-n}(a_{X!}\Q^{H}_{X}[n]) = Gr^{W}_{j}H^{i-n}_{c}(X, \Q[n]) = Gr^{W}_{j}H^{i}_{c}(X, \Q) = 0 \quad \text{for $j >i $.}$$
       \end{proof}

        \begin{lemma}\label{InjLemma}
            If $f: Z \rightarrow X$ is a proper surjective morphism, then the natural map
            $$Gr^{W}_{i}H^{i}_{c}(X,\Q) \rightarrow Gr^{W}_{i}H^{i}_{c}(Z,\Q)$$
            is injective.
        \end{lemma}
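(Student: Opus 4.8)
The plan is to exhibit $H^i_c(X,\C)$ as a weight-$i$ subquotient that maps injectively under a proper surjective $f\colon Z\to X$, by exploiting the strictness of weights in the category of mixed Hodge modules together with the pushforward adjunction. The key structural input is that $f$ proper surjective gives a map of Hodge modules $\Q^H_X \to f_*\Q^H_Z$ (more precisely $a_{X+}\Q^H_X \to a_{X+}f_+\Q^H_Z = a_{Z+}\Q^H_Z$ after applying the constant-map pushforward), and that on the top-weight piece of $H^i_c$ this map becomes injective. Throughout I would work with $\Q^H_X[n]$ and $\Q^H_Z[n_Z]$ and the induced maps on $H^{i}_c(-,\C)=H^{i-n}_c(-,DR(\Q^H_X[n]))$, using Lemma~\ref{WeightLemma} to control which weights survive.

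First I would reduce to a statement about the adjunction unit. For the constant map $a_X\colon X\to\{pt\}$, factoring $a_Z = a_X\circ f$, one has $a_{Z+}\Q^H_Z = a_{X+}f_+\Q^H_Z$, and the trace/adjunction for the proper surjective $f$ provides a morphism $\Q^H_X \to f_+\Q^H_Z$ in $D^bMHM(X)$ splitting (rationally, by decomposition) the natural unit $\Q^H_X\to f_+f^*\Q^H_X$. Applying $a_{X+}$ and taking $H^{i}_c$ yields a map $H^i_c(X,\C)\to H^i_c(Z,\C)$ which is exactly the natural one; my task is to check injectivity after passing to $Gr^W_i$. The plan is to use that the composite $\Q^H_X \to f_+\Q^H_Z \to \Q^H_X$ (unit followed by trace) is, on the relevant graded piece, an isomorphism onto the summand $IC^H_X$ sitting in top weight.

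Concretely, I would argue as follows. By Lemma~\ref{WeightLemma}, $Gr^W_i H^i_c(X,\C)=0$ for weights exceeding $i$, and the top weight part $Gr^W_i H^i_c(X,\C)$ receives contributions only from $Gr^W_n \cH^0(\Q^H_X[n]) = IC^H_X$, by the weight estimates $Gr^W_j\cH^a(\Q^H_X[n])=0$ for $j>a+n$ combined with the strictness of the perverse/weight filtration under $a_{X+}$. The decomposition theorem for the proper map $f$ writes $f_+(IC^H_Z)$ (equivalently the top-weight part of $f_+\Q^H_Z[\,\cdot\,]$) as $IC^H_X$ plus other summands supported on the discriminant, and the unit $IC^H_X \hookrightarrow f_+ IC^H_Z$ is a split injection of pure Hodge modules. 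Applying the exact functor $Gr^W_i H^{i}_c(X,DR(-))$ preserves this split injection, giving the desired injectivity of $Gr^W_i H^i_c(X,\C)\to Gr^W_i H^i_c(Z,\C)$.

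The hard part will be making the bookkeeping between $\Q^H_X[n]$ and its top-weight quotient $IC^H_X$ rigorous, i.e. verifying that the top graded weight piece $Gr^W_i$ of $H^i_c$ factors through $IC^H_X$ and that the splitting from the decomposition theorem is compatible with the weight filtration on cohomology. This requires the strictness of $a_{X+}$ with respect to $W$ (so that $Gr^W$ commutes with taking $H^i_c$) and the purity of $IC^H_X$; both are standard consequences of Saito's theory \cite{saito2}, but the precise diagram chase linking the trace map on Hodge modules to the comparison on $Gr^W_i H^i_c$ is where care is needed. I would isolate this as the key lemma and reduce everything else to the formal properties of weights and the decomposition theorem.
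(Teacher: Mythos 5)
Your proposal has two genuine gaps, and the first makes the argument circular as written. The key claim --- that $Gr^{W}_{i}H^{i}_{c}(X,\C)$ ``receives contributions only from'' $IC^{H}_{X}$, i.e.\ that $Gr^{W}_{i}H^{i}_{c}(X,\C) \to Gr^{W}_{i}IH^{i}_{c}(X,\C)$ is injective --- does \emph{not} follow from the weight estimate $Gr^{W}_{j}\cH^{a}(\Q^{H}_{X}[n])=0$ for $j>a+n$ together with strictness. Setting $K = \mathrm{Cone}(\Q^{H}_{X}[n]\to IC^{H}_{X})$, the kernel of $H^{i}_{c}(X,\C)\to IH^{i}_{c}(X,\C)$ is the image of $H^{i-n-1}_{c}(X, DR(K))$; the formal estimates only give that $K$ has weights $\leq n+1$ (each $\cH^{a}(K)$ is controlled by $\cH^{a+1}(\Q^{H}_{X}[n])$, which has weights $\leq a+1+n$), so this kernel is only known to have weights $\leq i$ --- exactly not enough to kill its $Gr^{W}_{i}$. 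That injectivity is precisely Proposition \ref{injProp} of the paper, which is \emph{deduced from} the lemma you are trying to prove (together with Corollary \ref{factorCor}); the only route avoiding that circularity is the recent theorem of Park--Popa that $K$ has weights $\leq n$, a substantial result, not bookkeeping. Relatedly, your opening claim that the unit $\Q^{H}_{X}\to f_{+}\Q^{H}_{Z}$ is split ``rationally, by decomposition'' is false for singular $X$: for the normalization of a nodal cubic, $H^{1}(X,\Q)\neq 0$ while $H^{1}(\tX,\Q)=0$, so the unit cannot split; only its $IC$-level analogue survives.

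The second gap is one of arrow directions. Even granting the injection into $Gr^{W}_{i}IH^{i}_{c}(X,\C)$ and the decomposition-theorem splitting $IC^{H}_{X}\hookrightarrow f_{+}IC^{H}_{Z}[-d]$, these produce an injection into $Gr^{W}_{i}IH^{i}_{c}(Z,\C)$, whereas your target $Gr^{W}_{i}H^{i}_{c}(Z,\C)$ maps \emph{into} $Gr^{W}_{i}IH^{i}_{c}(Z,\C)$, not out of it. To conclude, you need the square relating $\Q^{H}\to IC^{H}$ on $X$ and on $Z$ under $f_{+}$ to commute, i.e.\ compatibility of the pullback $f^{*}$ with some map $IC^{H}_{X}\to f_{+}IC^{H}_{Z}[-d]$. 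This is not a diagram chase: $IC$ is not functorial, and this compatibility is exactly Weber's theorem and its MHM extension, Theorem \ref{Weber'sExt}, which occupies a good part of this section. The paper's actual proof sidesteps both problems by induction on $\dim X$: the decomposition theorem is invoked only for proper surjections of \emph{smooth} varieties (where $\Q^{H}_{X}[n]=IC^{H}_{X}$, so neither issue arises), and the singular case is reduced to the smooth one via the discriminant square of a resolution, the Mayer--Vietoris sequence in compactly supported cohomology, the vanishing $Gr^{W}_{i}H^{i-1}_{c}(E,\C)=0$ from Lemma \ref{WeightLemma}, and the inductive hypothesis applied to $E\to S$. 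To salvage your approach you would have to first prove the Park--Popa weight bound and Theorem \ref{Weber'sExt}, at which point you would have reproved most of the section.
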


        \begin{proof}
            First, assume that $X$ and $Z$ are smooth and irreducible. If $\dim Z = m$, then by \cite{saito}, there is a non-canonical splitting
            $$f_{+}\Q^{H}_{Z}[m] \simeq \displaystyle \bigoplus_{i \in \Z} \cH^{i}(f_{+}\Q^{H}_{Z}[m])[-i].$$
            It is well known that $\Q^{H}_{X}[n]$ is a direct summand of $\cH^{-m +n}(f_{+}\Q^{H}_{Z}[m])$ \cite[Thm. 9.3.37]{max}. Hence $H^{i}_{c}(X, \Q)$ is a direct summand of $H^{i}_{c}(Z, \Q)$, and the map $H^{i}_{c}(X,\Q) \rightarrow H^{i}_{c}(Z,\Q)$ is injective. Since the functor $Gr^{W}_{i}(\bullet)$ is exact, we have a natural inclusion
            $$Gr^{W}_{i}H^{i}_{c}(X,\Q) \hookrightarrow Gr^{W}_{i}H^{i}_{c}(Z,\Q).$$

            Now, we proceed by induction on $\dim X$. The lemma holds whenever $\dim X = 0$. So, assume the lemma is true whenever $\dim X = k \geq 0.$ Let $\dim X = k+1$, and consider the following commutative diagram
            $$ \bT \tilde{Z} \arrow[d, "p"] \arrow[r] & \tX \arrow[d, "q"] & E \arrow[l] \arrow[d] \\
            Z \arrow[r, "f"] & X & S. \arrow[l] \eT$$
            Here $S$ is the singular locus, $q: \tX \rightarrow X$  is a resolution of singularities, $E = q^{-1}(S)_{red}$, and $p: \tilde{Z} \rightarrow Z$ is also a resolution of singularities. There is an exact sequence
            $$0 \rightarrow Gr^{W}_{i}H^{i}_{c}(X,\Q) \rightarrow Gr^{W}_{i}H^{i}_{c}(\tX,\Q) \oplus Gr^{W}_{i}H^{i}_{c}(S,\Q) \rightarrow Gr^{W}_{i}H^{i}_{c}(E,\Q).$$ 
            Since $\dim S \leq k$, by induction, the morphism
            $$Gr^{W}_{i}H^{i}_{c}(S,\Q) \rightarrow Gr^{W}_{i}H^{i}_{c}(E,\Q)$$
            is injective. Therefore, the morphism
             $$Gr^{W}_{i}H^{i}_{c}(X,\Q) \rightarrow Gr^{W}_{i}H^{i}_{c}(\tX,\Q)$$
             is also injective. Now, because of the following commutative diagram
             $$\bT Gr^{W}_{i}H^{i}_{c}(X,\Q) \arrow[r] \arrow[hook, d] & Gr^{W}_{i}H^{i}_{c}(Z,\Q) \arrow[hook, d] \\
             Gr^{W}_{i}H^{i}_{c}(\tX,\Q) \arrow[hook, r] & Gr^{W}_{i}H^{i}_{c}(\tilde{Z},\Q), \eT$$
             we have the lemma to hold by induction. 
            
        \end{proof}
        
        It is important to note that if $X$ is an irreducible variety and $f: \tX \rightarrow X$ is a resolution of singularities, then there exists a factorization
        \begin{equation}\label{eq:weber}
       \bT \Q_{X}[n] \ar[r] \ar[dr, "f^{*}"] & IC_{X} \ar[d]\\
        & \bR f_{*}\Q_{\tX}[n].\eT
        \end{equation}
 See \cite{Weber} or \cite{BBJFKGK} for details. A more general result was shown. Consider a morphism $f:Z \rightarrow X$ between irreducible algebraic varieties. Say $\dim(Z) = m$ and $\dim(X) = n$ with $d = m-n$. It was shown in \cite{BBJFKGK} \cite{Weber} that there is morphism $\lambda: IC_{X} \rightarrow \bR f_{*}IC_{Z}[-d]$ such that the following diagram commutes
$$\bT IC_{X} \arrow[r, "\lambda"] & \bR f_{*}IC_{Z}[-d] \\
\Q_{X}[n] \arrow[u, "\alpha_{X}"] \arrow[r, "f^*"] & (\bR f_{*} \Q_{Z}[m])[-d] \arrow[u, "\bR f_{*}\alpha_{Z}"] \eT$$ 
where $f^{*}: \Q_{X}[n] \rightarrow (\bR f_{*}\Q_{Z}[m])[-d]$ is the natural pull-back map, $\alpha_{X}: \Q_{X}[n] \rightarrow IC_{X}$ is the natural quotient map, and $\bR f_{*}\alpha_{Z}: (\bR f_{*}\Q_{Z}[m])[-d] \rightarrow \bR f_{*}IC_{Z}[-d]$ is the push-forward of the quotient map from $Z$. We will follow the proof given by Weber \cite{Weber} to show the commutative diagram above extends to the derived category of mixed Hodge modules.

\begin{thm}\label{Weber'sExt}
    Let $f:Z \rightarrow X$ be a morphism between irreducible algebraic varieties. Say $\dim(Z) = m$ and $\dim(X) = n$ with $d = m-n$.  There exists a morphism $\lambda_{+}: IC^{H}_{X} \rightarrow  f_{+}IC^{H}_{Z}[-d]$ such that the following diagram commutes
$$\bT IC^{H}_{X} \arrow[r, "\lambda_{+}"] &  f_{+}IC^{H}_{Z}[-d] \\
\Q^{H}_{X}[n] \arrow[u, "\alpha_{X}"] \arrow[r, "f^*"] & ( f_{+} \Q^{H}_{Z}[m])[-d] \arrow[u, "f_{+}\alpha_{Z}"] \eT$$
\end{thm}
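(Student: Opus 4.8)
The plan is to transport Weber's constructible construction into the derived category of mixed Hodge modules, using that all the operations involved---the restriction $j^{*}$ to an open set, the direct image $j_{+}$, the perverse truncations, the intermediate extension, and $f_{+}$---are available in $D^{b}MHM$ and are intertwined with the underlying constructible functors by the faithful realization $\mathrm{rat}:D^{b}MHM(X)\to D^{b}_{c}(X,\Q)$. Concretely, set $B:=f_{+}IC^{H}_{Z}[-d]$ and let $\beta_{+}:=(f_{+}\alpha_{Z})\circ f^{*}:\Q^{H}_{X}[n]\to B$ be the composite along the bottom and right of the square. The assertion that a $\lambda_{+}$ making the square commute exists is precisely the assertion that $\beta_{+}$ factors through $\alpha_{X}:\Q^{H}_{X}[n]\to IC^{H}_{X}$; indeed, given such a factorization the identity $\lambda_{+}\circ\alpha_{X}=\beta_{+}$ is exactly commutativity. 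So the whole theorem reduces to producing this factorization in $D^{b}MHM(X)$.

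To produce it, I would form the cone $C$ of $\alpha_{X}$, giving a triangle $\Q^{H}_{X}[n]\xrightarrow{\alpha_{X}}IC^{H}_{X}\to C\xrightarrow{+1}$. Applying $\mathrm{Hom}(-,B)$ shows $\beta_{+}$ lifts through $\alpha_{X}$ as soon as $\mathrm{Hom}_{D^{b}MHM(X)}(C[-1],B)=0$. The point is to identify $C$: since $\alpha_{X}$ is an isomorphism over the smooth locus, $C$ is supported on the singular set $S$, and the triangle computes its perverse cohomology sheaves as $\cH^{-1}(C)=\ker(\cH^{0}(\Q^{H}_{X}[n])\to IC^{H}_{X})=W_{n-1}\cH^{0}(\Q^{H}_{X}[n])$ and $\cH^{j}(C)=\cH^{j+1}(\Q^{H}_{X}[n])$ for $j\le -2$, all supported on $S$ with the weight bounds from the proof of Lemma \ref{WeightLemma}; in particular $C[-1]$ has weights $\le n$. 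Dually, since $IC^{H}_{Z}$ is pure of weight $m$ and $f_{+}$ raises weights, $B$ has weights $\ge n$; when $f$ is proper, $B$ is moreover pure and splits by the decomposition theorem as $\bigoplus_i\cH^{i}(B)[-i]$ with $\cH^{i}(B)$ pure of weight $n+i$. The vanishing of $\mathrm{Hom}(C[-1],B)$ then follows by feeding these weight and support data into the spectral sequence $E^{p,q}_{2}=\mathrm{Ext}^{p}_{MHM}(\cH^{-q}(C[-1]),\cH^{i}(B))\Rightarrow \mathrm{Hom}^{p+q}$, using strictness of morphisms of mixed Hodge modules (a map from weight $<w$ to weight $\ge w$ is zero) together with the strict support conditions obeyed by the intersection-complex summands of $B$.

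Once the factorization exists I would simply take $\lambda_{+}$ to be it; this is a genuine morphism in $D^{b}MHM(X)$ and the square commutes by construction, so the theorem is proved. To connect with the constructible statement cited from Weber, one notes that $\mathrm{rat}$ carries $\alpha_{X},f^{*},\alpha_{Z}$ to their constructible counterparts, so $\mathrm{rat}(\lambda_{+})$ is a constructible factorization of $\mathrm{rat}(\beta_{+})$ through $\mathrm{rat}(\alpha_{X})$; Weber's argument is exactly the constructible shadow of the same Hom-vanishing, and faithfulness of $\mathrm{rat}$ identifies $\mathrm{rat}(\lambda_{+})$ with his $\lambda$.

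The main obstacle is the Hom-vanishing $\mathrm{Hom}_{D^{b}MHM}(C[-1],B)=0$. The weight count is borderline---$C[-1]$ has weights $\le n$ while $B$ has weights $\ge n$---so purity alone does not separate the two, and one must use the support of $C$ on $S$ (of dimension $<n$) together with the \emph{strict} support/cosupport inequalities of the intersection-complex summands of $B$ to annihilate the surviving $\mathrm{Ext}^{p}$ terms with $p\ge 1$. For non-proper $f$ the decomposition theorem is unavailable, so I would first reduce to the proper case by compactifying $f$ as $\bar f\circ j$ and using that $j_{+}$ for the open complement only raises weights, or else argue directly from the weight filtration on $f_{+}IC^{H}_{Z}$. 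Getting this weight-and-support bookkeeping exactly right, rather than the purely formal transport of functors, is where the real work lies.
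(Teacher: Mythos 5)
Your proposal reduces the theorem to the vanishing $\mathrm{Hom}_{D^{b}MHM(X)}(C[-1],B)=0$, where $C$ is the cone of $\alpha_{X}$ and $B=f_{+}IC^{H}_{Z}[-d]$, and this is exactly where the proof breaks down. The formal weight bounds you invoke (the ones from the proof of Lemma \ref{WeightLemma}, i.e.\ that $\Q^{H}_{X}[n]$ has weights $\leq n$) give precisely $C[-1]$ of weights $\leq n$ and $B$ of weights $\geq n$; but Saito's vanishing $\mathrm{Hom}(M,N)=0$ requires a \emph{strict} separation of weights, so the borderline case yields nothing, as you yourself concede. What is needed is the improvement by one: that $C=K$ itself has weights $\leq n$ (equivalently $C[-1]$ has weights $\leq n-1$). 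This is not a formal consequence of the triangle defining $C$ --- the long exact sequence only gives $\cH^{j}(C)\cong\cH^{j+1}(\Q^{H}_{X}[n])$ for $j\leq -2$, whose weights can a priori go up to $j+n+1$ --- and it is not obtained by "support/cosupport bookkeeping" either; it is a genuine theorem of Park and Popa, which the paper cites (in the Remark following Corollary \ref{factorCor}) precisely to explain that your route becomes available \emph{given} that result. Your sketch never proves this bound, and the proposed substitute (strict support conditions on the summands of $B$, plus a spectral sequence) is asserted, not carried out; every summand of $B$ remains in the same borderline weight position, so purity and supports alone do not annihilate the relevant $\mathrm{Ext}^{1}$ terms. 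A secondary error: $\mathrm{rat}:D^{b}MHM\to D^{b}_{c}$ is \emph{not} faithful (e.g.\ $\mathrm{Ext}^{1}_{MHM(pt)}(\Q^{H},\Q^{H}(1))\neq 0$ maps to zero), so your compatibility argument with Weber's constructible $\lambda$ also does not stand as written.

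For contrast, the paper's proof avoids any Hom-vanishing of this kind: following Weber, it takes a resolution $\pi_{X}:\tX\to X$, forms the reduced fiber product $\tilde{Z}=(Z\times_{X}\tX)_{red}$, uses the decomposition theorem to realize $IC^{H}_{X}$ as a summand of $\pi_{X+}\Q^{H}_{\tX}[n]$ and $IC^{H}_{Z}$ as a summand of $\pi_{Z+}IC^{H}_{\tilde{Z}}$, defines $\lambda_{+}$ as the composite through these summands, and verifies commutativity by the elementary computation $\mathrm{Hom}(\Q^{H}_{X}[n],\pi_{X+}\Q^{H}_{\tX}[n])=H^{0}(\tX,\Q)\subseteq H^{0}(V,\Q)$, so that two maps agreeing over the dense open set where $\pi_{X}$ is an isomorphism must be equal. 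Your approach can be completed --- but only by importing the Park--Popa weight bound as a black box, which your write-up neither does nor replaces.
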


\begin{proof}
    We follow the same argument given in \cite{Weber}. Let $\pi_{X}: \tX \rightarrow X$ be any resolution of singularities of $X$. We will denote the reduced fiber product as $\tilde{Z} = (Z \times_{X} \tX)_{red}$ with the proper map $\pi_{Z}: \tilde{Z} \rightarrow Z$. Note that $\tilde{Z}$ may be singular and not equidimensional. From the commutative diagram
    $$\bT \tilde{Z} \arrow[r, "\tilde{f}"] \arrow[d, "\pi_{Z}"] & \tX \arrow[d, "\pi_{X}"]\\
    Z \arrow[r, "f"] & X \eT$$
    we obtain the following diagram in the derived category of mixed Hodge modules
    $$\bT \pi_{X+}IC^{H}_{\tX}  \arrow[r, "="] & \pi_{X+}\Q^{H}_{\tX}[n] \arrow[r, "\pi_{X+}(\tilde{f}^{*})"] & f_{+}\pi_{Z+}(\Q^{H}_{\tilde{Z}}[m])[-d] \arrow[rr, "f_{+}\pi_{Z+}(\alpha_{\tilde{Z}})"] & &f_{+}\pi_{Z+}IC^{H}_{\tilde{Z}}[-d] \arrow[d, dotted, "?"]\\
    IC^{H}_{X}   \arrow[u, dotted, "?"] & \arrow[l, "\alpha_{X}"]\Q^{H}_{X}[n] \arrow[r, "f^{*}"] \arrow[u, "\pi^{*}_{X}"] & f_{+}(\Q^{H}_{Z}[m])[-d] \arrow[rr, "f_{+}(\alpha_{Z})"]  \arrow[u, "f_{+}(\pi_{Z}^{*})"]& &f_{+}IC^{H}_{Z}[-d] \eT$$
    To prove the existence of $\lambda_{+}: IC^{H}_{X} \rightarrow f_{+}IC^{H}_{Z}[-d],$ we need to show the arrows with question marks exist in such a way that the diagram above commutes. It is well known that $IC^{H}_{X}$ is a direct summand of $\pi_{X+}IC^{H}_{\tX}$ (up to isomorphism), see \cite{max} for details. Similarly, $IC^{H}_{Z}$ is a direct summand of $\pi_{Z+}IC^{H}_{\tilde{Z}}$. If we choose the inclusion map 
    $$i: IC^{H}_{X} \hookrightarrow \pi_{X+}IC^{H}_{\tX}$$
    and the projection map (up to a shift) 
    $$f_{+}(p):f_{+}\pi_{Z+}IC^{H}_{\tilde{Z}}[-d] \rightarrow f_{+}IC^{H}_{Z}[-d]$$
    the diagram above commutes. Indeed, consider the two maps
    $$i \circ\alpha_{X}: \Q^{H}_{X}[n] \rightarrow \pi_{X+}\Q^{H}_{\tX}[n]$$
    $$\pi^{*}_{X}:\Q^{H}_{X}[n] \rightarrow \pi_{X+}\Q^{H}_{\tX}[n]$$
    If we denote $U \subset X$ to be the open subset such that $\pi^{-1}_{X}(U) \rightarrow U$ is an isomorphism, then 
    $$(i \circ \alpha_{X})|_{U} = \pi^{*}_{X}|_{U}.$$
    If $a_{X}:X \rightarrow \{pt\}$ is the natural map, then
    $$Hom(\Q^{H}_{X}[n], \pi_{X+}\Q^{H}_{\tX}[n]) = Hom(a^{*}_{X}\Q^{H}_{pt}[n],\pi_{X+}\Q^{H}_{\tX}[n]) = Hom(\Q^{H}_{pt}[n],a_{X+}\pi_{X+}\Q^{H}_{\tX}[n]) = H^{0}(\tX, \Q)$$
    If $\pi_{X}^{-1}(U) = V$, then
    $$Hom(\Q^{H}_{U}[n], \pi_{X+}\Q^{H}_{V}[n]) = Hom(a^{*}_{U}\Q^{H}_{pt}[n],\pi_{X+}\Q^{H}_{V}[n]) = Hom(\Q^{H}_{pt}[n],a_{U+}\pi_{X+}\Q^{H}_{V}[n]) = H^{0}(V, \Q)$$
    Since there is an inclusion $H^{0}(\tX,\Q) \subseteq H^{0}(V, \Q)$, and the two maps $i \circ \alpha_{X}$ and $\pi^{*}_{X}$ agree on $U$, we have $i \circ \alpha_{X} = \pi^{*}_{X}$. A similar argument can be made for 
    $$f_{+}(p):f_{+}\pi_{Z+}IC^{H}_{\tilde{Z}}[-d] \rightarrow f_{+}IC^{H}_{Z}[-d].$$
\end{proof}

\begin{cor}\label{factorCor}
    If $X$ is an irreducible variety and $f: \tX \rightarrow X$ is a resolution of singularities, then there exists the following factorization
        $$
       \bT \Q^{H}_{X}[n] \ar[r] \ar[dr, "f^{*}"] & IC^{H}_{X} \ar[d]\\
        & f_{+}\Q^{H}_{\tX}[n]\eT$$
        that is compatible with \ref{eq:weber}.
\end{cor}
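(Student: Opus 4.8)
The plan is to deduce Corollary~\ref{factorCor} as a direct special case of Theorem~\ref{Weber'sExt}. First I would apply the theorem to the resolution map $f: \tX \rightarrow X$ itself, playing the role of the morphism in the theorem. Here $Z = \tX$ is smooth, so $m = \dim \tX = n = \dim X$, which gives $d = m - n = 0$. Consequently the shift $[-d]$ disappears, and the push-forward $f_{+}IC^{H}_{\tX}$ simplifies: since $\tX$ is smooth, $IC^{H}_{\tX} = \Q^{H}_{\tX}[n]$, so $f_{+}IC^{H}_{\tX}[-d] = f_{+}\Q^{H}_{\tX}[n]$. The theorem then produces a morphism $\lambda_{+}: IC^{H}_{X} \rightarrow f_{+}\Q^{H}_{\tX}[n]$ fitting into the commutative square
$$\bT IC^{H}_{X} \arrow[r, "\lambda_{+}"] &  f_{+}\Q^{H}_{\tX}[n] \\
\Q^{H}_{X}[n] \arrow[u, "\alpha_{X}"] \arrow[r, "f^*"] & f_{+}\Q^{H}_{\tX}[n] \arrow[u, "="] \eT$$
where the right vertical map is the identity (since $\alpha_{\tX}: \Q^{H}_{\tX}[n] \rightarrow IC^{H}_{\tX}$ is the identity in the smooth case). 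Commutativity of this square says precisely that $\lambda_{+} \circ \alpha_{X} = f^{*}$, which is the desired triangular factorization: the map $\Q^{H}_{X}[n] \rightarrow IC^{H}_{X}$ is $\alpha_{X}$, the map $IC^{H}_{X} \rightarrow f_{+}\Q^{H}_{\tX}[n]$ is $\lambda_{+}$, and the diagonal is $f^{*}$.

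It remains to check compatibility with the underlying diagram \eqref{eq:weber} in the constructible derived category. For this I would invoke the fact that applying the de Rham (or rationalization) functor from mixed Hodge modules to constructible complexes carries $\alpha_{X}$, $f^{*}$, and $\lambda_{+}$ to the corresponding maps $\alpha_{X}$, $f^{*}$, and $\lambda$ appearing in \eqref{eq:weber}. Since Theorem~\ref{Weber'sExt} was established by lifting Weber's construction of $\lambda$ in the constructible setting, the Hodge-module morphism $\lambda_{+}$ is by design one whose image under this forgetful functor recovers $\lambda$; hence the triangle in the statement reduces to \eqref{eq:weber} after forgetting the Hodge structure, which is exactly the asserted compatibility.

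The only genuine point requiring care, rather than formal specialization, is confirming that the particular $\lambda_{+}$ constructed in the proof of Theorem~\ref{Weber'sExt} — built from a chosen splitting $i: IC^{H}_{X} \hookrightarrow \pi_{X+}IC^{H}_{\tX}$ and projection $f_{+}(p)$ — indeed refines the constructible $\lambda$ of \eqref{eq:weber}, rather than merely some morphism with the same source and target. I expect this to be the main (though mild) obstacle: one must observe that in the present specialized setting the reduced fiber product $\tilde{Z} = (\tX \times_X \tX)_{red}$ and the splittings chosen in the theorem's proof are compatible with the splittings used by Weber, so that forgetting the Hodge structure returns Weber's original diagram. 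Since both constructions are pinned down by their restriction to the open locus $U$ where $f$ is an isomorphism (as exploited in the proof of Theorem~\ref{Weber'sExt} via the injection $H^{0}(\tX,\Q) \subseteq H^{0}(V,\Q)$), this agreement follows, completing the verification.
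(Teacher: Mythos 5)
Your proposal is correct and matches the paper's intended argument: the paper states Corollary~\ref{factorCor} without proof precisely because it is the specialization of Theorem~\ref{Weber'sExt} to $Z=\tX$, where $d=0$, $IC^{H}_{\tX}=\Q^{H}_{\tX}[n]$, and $\alpha_{\tX}=\mathrm{id}$, so the commutative square collapses to the asserted triangle. Your remarks on compatibility with \eqref{eq:weber} (the construction of $\lambda_{+}$ lifts Weber's constructible construction, pinned down by restriction to the locus where $f$ is an isomorphism) are exactly the point the paper relies on when it says the theorem extends Weber's diagram to mixed Hodge modules.
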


        \begin{prop}\label{injProp}
            If $X$ is an irreducible variety, then the natural map
            $$Gr^{W}_{i}H^{i}_{c}(X, \Q) \rightarrow Gr^{W}_{i}IH^{i}_{c}(X, \Q)$$
            is injective.
        \end{prop}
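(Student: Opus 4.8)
The plan is to realize the map $Gr^{W}_{i}H^{i}_{c}(X,\C) \to Gr^{W}_{i}IH^{i}_{c}(X,\C)$ as the first arrow in a factorization of the pullback $f^{*}$ associated to a resolution $f:\tX \to X$, and then to invoke Lemma \ref{InjLemma}. The point is purely formal: if a composite $A \to B \to C$ is injective, then $A \to B$ is injective, so it suffices to factor an injective map through the intersection cohomology.

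First I would take the factorization supplied by Corollary \ref{factorCor},
$$\Q^{H}_{X}[n] \xrightarrow{\alpha_{X}} IC^{H}_{X} \to f_{+}\Q^{H}_{\tX}[n],$$
whose composite is the pullback $f^{*}$. Applying the functor $H^{i-n}_{c}(X, DR(-))$ and using that $f$ is proper (so $f_{+} = f_{!}$ and $H^{i-n}_{c}(X, DR(f_{+}\Q^{H}_{\tX}[n])) \cong H^{i}_{c}(\tX,\C)$, while $H^{i-n}_{c}(X, DR(\Q^{H}_{X}[n])) = H^{i}_{c}(X,\C)$), this yields a factorization
$$H^{i}_{c}(X,\C) \to IH^{i}_{c}(X,\C) \to H^{i}_{c}(\tX,\C)$$
of the pullback map $f^{*}: H^{i}_{c}(X,\C) \to H^{i}_{c}(\tX,\C)$.

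Next, since taking the associated graded of the weight filtration is an exact functor, I would apply $Gr^{W}_{i}$ throughout, obtaining
$$Gr^{W}_{i}H^{i}_{c}(X,\C) \to Gr^{W}_{i}IH^{i}_{c}(X,\C) \to Gr^{W}_{i}H^{i}_{c}(\tX,\C),$$
whose composite is $Gr^{W}_{i}(f^{*})$. Because $f$ is proper and surjective, Lemma \ref{InjLemma} shows this composite is injective, and therefore the first arrow $Gr^{W}_{i}H^{i}_{c}(X,\C) \to Gr^{W}_{i}IH^{i}_{c}(X,\C)$ is injective, which is exactly the claim.

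The only real subtlety lies in the bookkeeping of the second step: one must check that applying the cohomology functor to the Hodge-module factorization of Corollary \ref{factorCor} produces precisely the natural maps on ordinary and intersection cohomology, and that the resulting composite is genuinely the topological pullback $f^{*}$ so that Lemma \ref{InjLemma} applies verbatim. Here compatibility with \eqref{eq:weber} guarantees that the composite is the expected pullback, and properness of $f$ underlies the identification $H^{i-n}_{c}(X, DR(f_{+}\Q^{H}_{\tX}[n])) \cong H^{i}_{c}(\tX,\C)$. Once these identifications are in place, the argument is immediate.
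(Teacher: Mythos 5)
Your proof is correct and follows essentially the same route as the paper: the paper likewise combines the factorization of $f^{*}$ through intersection cohomology from Corollary \ref{factorCor} with the injectivity of $Gr^{W}_{i}H^{i}_{c}(X,\C)\to Gr^{W}_{i}H^{i}_{c}(\tX,\C)$ from Lemma \ref{InjLemma}, concluding formally that the first arrow of the factorization is injective. Your additional bookkeeping about identifying $H^{i-n}_{c}(X,DR(f_{+}\Q^{H}_{\tX}[n]))$ with $H^{i}_{c}(\tX,\C)$ and the exactness of $Gr^{W}_{i}$ is left implicit in the paper but is the same argument.
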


        \begin{proof}
            If $f: \tX \rightarrow X$ is a resolution of singularities, then the map
            $$Gr^{W}_{i}H^{i}_{c}(X, \Q) \rightarrow Gr^{W}_{i}H^{i}_{c}(\tX, \Q)$$
            is injective by Lemma \ref{InjLemma}. There is also a factorization
            $$\bT Gr^{W}_{i}H^{i}_{c}(X, \Q) \ar[r] \ar[dr, hook] & Gr^{W}_{i}IH^{i}_{c}(X, \Q) \ar[d] \\
            & Gr^{W}_{i}H^{i}_{c}(\tX, \Q) \eT$$
             by Corollary \ref{factorCor}. Therefore, the map $Gr^{W}_{i}H^{i}_{c}(X, \Q) \rightarrow Gr^{W}_{i}IH^{i}_{c}(X, \Q)$ must be injective.
        \end{proof}

        \begin{rmk}
            Recently, it was shown by Park and Popa \cite{ParkPopa} that if $K:= Cone(\Q^{H}_{X}[n] \rightarrow IC^{H}_{X})$, then $K$ is of weight $\leq n$. Equivalently,
            $$Gr^{W}_{i}H^{j}(K) = 0 \quad \text{for $i> j +n.$}$$
            So, consider a resolution of singularities $f: \tX \rightarrow X$ and the exact traingle
            $$\bT \bR Hom(K, f_{+}\Q^{H}_{\tX}[n]) \arrow[r] & \bR Hom(IC^{H}_{X}, f_{+}\Q^{H}_{\tX}[n]) \arrow[r] &  \bR Hom(\Q^{H}_{X}[n], f_{+}\Q^{H}_{\tX}[n]) \arrow[r, "+1"] & \hfill \eT$$
            Since $K$ is of weight $\leq n$ and $f_{+}\Q^{H}_{\tX}[n]$ is of pure weight $n$, we have $Ext^{1}(K, f_{+}\Q^{H}_{\tX}[n]) = 0$ \cite{saito2}. Therefore, we have a surjective map
            $$\bT Hom(IC^{H}_{X}, f_{+}\Q^{H}_{\tX}[n]) \arrow[r] &   Hom(\Q^{H}_{X}[n], f_{+}\Q^{H}_{\tX}[n]) \arrow[r] & 0. \eT$$ 
            So, there exists a map $IC^{H}_{X} \dashrightarrow f_{+}\Q^{H}_{\tX}[n]$ such that the diagram below commutes
            $$
       \bT \Q_{X}^{H}[n] \ar[r, "\alpha_{X}"] \ar[dr, "f^{*}"] & IC^{H}_{X} \ar[d, dashed]\\
        & f_{+}\Q_{\tX}[n].\eT$$
        By Theorem \ref{Weber'sExt}, we know we can take $IC^{H}_{X} \dashrightarrow f_{+}\Q^{H}_{\tX}[n]$ to be the inclusion map (up to isomorphism).
        \end{rmk}

\begin{lemma}\label{lemma:F0DR}
 We have isomorphisms
 $$Gr_0^F DR(\Q_X^{H}[n]) \cong \underline{\Omega}_X^{0}[n]$$
 $$Gr_0^F DR(IC_X^H) \cong {\bf R} f_*\cO_{\tilde X}[n].$$
 
\end{lemma}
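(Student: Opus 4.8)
The first isomorphism is immediate. The earlier remark identifies $Gr^F_{-p}DR(i_+\Q^H_X[n]) \simeq i_*(\underline{\Omega}_X^p[n-p])$ for all $p$; taking $p=0$ gives $Gr^F_0 DR(i_+\Q^H_X[n]) \simeq i_*(\underline{\Omega}_X^0[n])$. Since the de Rham functor commutes with the closed pushforward, $DR(i_+\Q^H_X[n]) \simeq i_* DR(\Q^H_X[n])$, and $i_*$ is exact and fully faithful on complexes supported on $X$; stripping it off yields $Gr^F_0 DR(\Q^H_X[n]) \cong \underline{\Omega}_X^0[n]$.

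For the second isomorphism I would first compute the graded de Rham complex of $f_+\Q^H_{\tX}[n]$. Since $\tX$ is smooth of dimension $n$, the filtered de Rham complex of $\Q^H_{\tX}[n]$ satisfies $Gr^F_{-p}DR(\Q^H_{\tX}[n]) \cong \Omega^p_{\tX}[n-p]$; in particular $Gr^F_0 DR(\Q^H_{\tX}[n]) \cong \cO_{\tX}[n]$ and $Gr^F_{-n}DR(\Q^H_{\tX}[n]) \cong \omega_{\tX}$. Because $f$ is proper, Saito's theory gives $DR(f_+(-)) \cong \bR f_* DR(-)$ strictly compatibly with $F$, so $Gr^F_p DR$ commutes with $f_+$. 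Hence $Gr^F_0 DR(f_+\Q^H_{\tX}[n]) \cong \bR f_*\cO_{\tX}[n]$, while $Gr^F_{-n}DR(f_+\Q^H_{\tX}[n]) \cong \bR f_*\omega_{\tX} \cong f_*\omega_{\tX}$, the last isomorphism by the Grauert--Riemenschneider vanishing theorem. By Corollary \ref{factorCor} together with the decomposition theorem, the map $\alpha_X\colon \Q^H_X[n]\to IC^H_X$ realizes $IC^H_X$ as a direct summand of $f_+\Q^H_{\tX}[n]$, say $f_+\Q^H_{\tX}[n] \cong IC^H_X \oplus C$. Over the smooth locus $f$ is an isomorphism and $IC^H_X$ agrees with $\Q^H_X[n]$, so $C$ is supported on the singular locus $S$, which has dimension $< n$. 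Applying $Gr^F_0 DR$ to this splitting reduces the claim to showing $Gr^F_0 DR(C) = 0$.

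To prove this vanishing I would pass to the opposite end of the Hodge filtration. Applying $Gr^F_{-n}DR$ to the splitting gives $f_*\omega_{\tX} \cong Gr^F_{-n}DR(IC^H_X)\oplus Gr^F_{-n}DR(C)$. Since $C$ is supported on $S$, the summand $Gr^F_{-n}DR(C)$ is a coherent sheaf supported on the proper closed subset $S$, hence torsion; but it is a direct summand, and so a subsheaf, of $f_*\omega_{\tX}$, which is torsion free. A torsion subsheaf of a torsion free sheaf vanishes, so $Gr^F_{-n}DR(C) = 0$. Finally, $C$ is pure of weight $n$, hence self dual up to a Tate twist $\D C \cong C(n)$, and Saito's duality identifies $Gr^F_0 DR(C)$ with the Grothendieck dual $\bR\cH om(Gr^F_{-n}DR(C),\omega^\bullet_X)$ of the corresponding piece of $\D C$. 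Therefore $Gr^F_0 DR(C)$ is dual to $0$ and so vanishes, giving $Gr^F_0 DR(IC^H_X) \cong Gr^F_0 DR(f_+\Q^H_{\tX}[n]) \cong \bR f_*\cO_{\tX}[n]$.

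The main obstacle is precisely the vanishing $Gr^F_0 DR(C)=0$. One cannot argue it directly on the structure sheaf side, because $\bR f_*\cO_{\tX}$ genuinely carries nonzero higher direct images supported on $S$, so there is no torsion freeness to exploit there; indeed the content of the lemma is that those higher direct images are captured by $IC^H_X$ rather than by $C$. The decisive point is that Grauert--Riemenschneider collapses the canonical side $\bR f_*\omega_{\tX}$ to a single torsion free sheaf, leaving no room for an $S$-supported summand, and the self duality of $C$ then transports this vanishing back to the structure sheaf end. The step requiring care is bookkeeping of the Tate twist in $\D C \cong C(n)$, ensuring that it matches the shift from filtration level $0$ to level $-n$ so that the two extreme graded pieces are genuinely Grothendieck dual.
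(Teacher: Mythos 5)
You should know that the paper itself does not prove this lemma: it is quoted as a theorem of Saito, with \cite{saito5} cited for the first isomorphism and \cite{saito3} for the second. Your proposal therefore goes beyond the paper by actually proving the second isomorphism, and the route you take is in essence Saito's own: split $f_{+}\Q^{H}_{\tX}[n] \cong IC^{H}_{X}\oplus C$ by the decomposition theorem (a splitting available in the paper via Corollary \ref{factorCor}), use strictness of the proper pushforward so that $Gr^{F}_{p}DR$ passes through $f_{+}$, kill $Gr^{F}_{-n}DR(C)$ by Grauert--Riemenschneider together with torsion-freeness of $f_{*}\omega_{\tX}$, and transport the vanishing to filtration level $0$ by duality. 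The reduction to $Gr^{F}_{0}DR(C)=0$, the identification of the two extreme graded pieces of $f_{+}\Q^{H}_{\tX}[n]$, and the torsion argument are all correct. (A small inaccuracy: $C$ is supported on the locus where $f$ fails to be an isomorphism, which for an arbitrary resolution need not be the singular locus; all you actually use is that this is a proper closed subset, so nothing is harmed.)

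The one step whose justification fails as written is ``$C$ is pure of weight $n$, hence self dual up to a Tate twist.'' By the decomposition theorem $C\cong\bigoplus_{i}C_{i}[-i]$ with $C_{i}$ polarizable pure of weight $n+i$; such a complex is pure of weight $n$ only in the sense that $Gr^{W}_{j}\cH^{i}=0$ for $j\neq n+i$, and purity in this sense does not imply $\D C\cong C(n)$: for a single pure module $M$ of weight $n+1$, the complex $M[-1]$ is pure of weight $n$, yet $\D(M[-1])\cong M(n+1)[1]$, which is not isomorphic to $M(n)[-1]$. The isomorphism $\D C\cong C(n)$ is nevertheless true for your $C$, but it requires either relative Hard Lefschetz ($C_{-i}\cong C_{i}(i)$) combined with the polarization isomorphisms $\D C_{i}\cong C_{i}(n+i)$, or Krull--Schmidt cancellation applied to $\D(IC^{H}_{X}\oplus C)\cong (IC^{H}_{X}\oplus C)(n)$ together with $\D IC^{H}_{X}\cong IC^{H}_{X}(n)$. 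Better, you can bypass the self-duality of $C$ altogether: your torsion argument already shows that the projection $f_{*}\omega_{\tX}=Gr^{F}_{-n}DR(f_{+}\Q^{H}_{\tX}[n])\rightarrow Gr^{F}_{-n}DR(IC^{H}_{X})$ is an isomorphism; applying $\bR\cH om_{X}(-,\omega^{\bullet}_{X})$, using $\D IC^{H}_{X}\cong IC^{H}_{X}(n)$ and Saito's compatibility of $DR$ with duality on one side, and Grothendieck duality $\bR\cH om_{X}(\bR f_{*}\omega_{\tX},\omega^{\bullet}_{X})\cong \bR f_{*}\cO_{\tX}[n]$ on the other, yields $Gr^{F}_{0}DR(IC^{H}_{X})\cong\bR f_{*}\cO_{\tX}[n]$ with no reference to the dual of $C$.
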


\begin{proof}
 These isomorphisms are due to Saito. See \cite{saito5} for the first isomorphism and \cite{saito3} for the second.
\end{proof}

\begin{proof}[Proof of Theorem \ref{HodgeThm} ]
   If $K:= Cone \bigg(\Q^{H}_{X}[n] \rightarrow IC^{H}_{X} \bigg)$, then we have the exact triangle 
   $$ \bT \Q^{H}_{X}[n] \arrow[r] & IC^{H}_{X} \arrow[r] & K \arrow[r, "+1"] & \hfill. \eT$$
   This exact triangle induces the long exact sequence
   $$\cdots \rightarrow H^{i}_{c}(X, \C) \rightarrow IH^{i}_{c}(X, \C) \rightarrow H^{i-n}_{c}(X, DR(K)) \rightarrow \cdots. $$
   The morphisms are morphisms of mixed Hodge structures. Hence, there is a long exact sequence
   $$\cdots \rightarrow Gr
   ^{0}_{F}H^{i}_{c}(X, \C) \rightarrow Gr
   ^{0}_{F}IH^{i}_{c}(X, \C) \rightarrow Gr
   ^{0}_{F}H^{i-n}_{c}(X, DR(K)) \rightarrow \cdots. $$
   We will first show $Gr
   ^{0}_{F}H^{i-n}_{c}(X, DR(K)) = 0$ for $0 \leq i \leq k.$ Note that we have the quasi-isomorphisms 
   $$F_{0}DR(\Q^{H}_{X}[n]) \simeq DR(\Q^{H}_{X}[n])$$ 
   $$F_{0}DR(IC^{H}_{X}) \simeq DR(IC^{H}_{X}).$$
   Therefore, we must also have $F_{0}DR(K) \simeq DR(K)$ and $^{d}F^{0}H^{i-n}_{c}(X, DR(K)) = H^{i-n}_{c}(X, DR(K))$. Hence, there is a commutative diagram
   $$\begin{tikzcd}
        ^{d}F^{0}H^{i-n}_{c}(X, DR(K)) = H^{i-n}_{c}(X, DR(K)) \arrow[r] \arrow[d] & ^{d}Gr^{0}_{F}H^{i-n}_{c}(X, DR(K)) \arrow[d] \arrow[r] &  0\\
         F^{0}H^{i-n}_{c}(X, DR(K)) =  H^{i-n}_{c}(X, DR(K)) \arrow[r] & Gr^{0}_{F}H^{i-n}_{c}(X, DR(K)) \arrow[r] & 0.
    \end{tikzcd}$$
    By Lemma \ref{lemma:F0DR}, we have a distinguished triangle
    $$\underline{\Omega}_X^{0}[n]\to {\bf R} f_*\cO_{\tilde X}[n]\to Gr^{F}_{0}DR(K)\xrightarrow{+1}. $$
    If $X$ is normal and $R^{i}f_{*}\cO_{\tX} = 0$ for $ 1 \leq i \leq k$, then by Theorem \ref{VanishingThm}, 
    $$\cH^{i}(Gr^{F}_{0}DR(K)) = 0 \quad \text{for $-n \leq i \leq -n+k$.}$$
    Thus, for $0 \leq i \leq k$, $H^{i-n}(X, (Gr^{F}_{0}DR(K)) = 0$ by applying the spectral sequence
    $$E^{p,q}_{2} = H^{p}(X, \cH^{q}(Gr^{F}_{0}DR(K))) \Rightarrow H^{p+q}(X, Gr^{F}_{0}DR(K)).$$
    By the spectral sequence \eqref{eq:deRham}, we obtain  $^{d}Gr^{0}_{F}H^{i-n}_{c}(X, DR(K)) = 0$ for $0 \leq i \leq k.$ By the commutative diagram above, the natural map
    $$ ^{d}Gr^{0}_{F}H^{i-n}_{c}(X, DR(K)) \rightarrow Gr^{0}_{F}H^{i-n}_{c}(X, DR(K)) $$
    is surjective. So we must also have $Gr^{0}_{F}H^{i-n}_{c}(X, DR(K)) = 0$ for $0 \leq i \leq k.$

   For $0 \leq i \leq k$ we have
    $$\dim Gr^{i}_{F}Gr^{W}_{i}H^{i-n}_{c}(X, DR(K)) = \dim Gr^{0}_{F}Gr^{W}_{i}H^{i-n}_{c}(X, DR(K)) = \dim Gr^{W}_{i}Gr^{0}_{F}H^{i-n}_{c}(X, DR(K))) = 0,$$
    where the first equality follows from the purity of $Gr^{W}_{i}H^{i-n}_{c}(X, DR(K))$.
    So, the natural map
    $$Gr^{i}_{F}Gr^{W}_{i}H^{i}_{c}(X, \C) \rightarrow Gr^{i}_{F}Gr^{W}_{i}IH^{i}_{c}(X, \C)$$
    is an isomorphism for $0 \leq i \leq k$. For cohomology with compact support, there are exact sequences 
    $$0 \rightarrow W_{i-1}H^{i}_{c}(X, \C) \rightarrow H^{i}_{c}(X, \C) \rightarrow Gr^{W}_{i}H^{i}_{c}(X, \C) \rightarrow 0$$
    $$0 \rightarrow W_{i-1}IH^{i}_{c}(X, \C) \rightarrow IH^{i}_{c}(X, \C) \rightarrow Gr^{W}_{i}IH^{i}_{c}(X, \C) \rightarrow 0.$$

    Therefore, we obtain the exact sequences
    $$0 \rightarrow Gr^{i}_{F}W_{i-1}H^{i}_{c}(X, \C) \rightarrow Gr^{i}_{F}H^{i}_{c}(X, \C) \rightarrow Gr^{i}_{F}Gr^{W}_{i}H^{i}_{c}(X, \C) \rightarrow 0$$
    $$0 \rightarrow Gr^{i}_{F}W_{i-1}IH^{i}_{c}(X, \C) \rightarrow Gr^{i}_{F}IH^{i}_{c}(X, \C) \rightarrow Gr^{i}_{F}Gr^{W}_{i}IH^{i}_{c}(X, \C) \rightarrow 0.$$
    Since $F^{0}H^{i}_{c}(X, \C) = H^{i}_{c}(X, \C)$ and $F^{0}IH^{i}_{c}(X, \C) = IH^{i}_{c}(X, \C)$, the theory of weights for mixed Hodge structures forces $Gr^{i}_{F}W_{i-1}H^{i}_{c}(X, \C) = 0$ and $Gr^{i}_{F}W_{i-1}IH^{i}_{c}(X, \C)=0.$  The second pair of isomorphisms of the  theorem now follows from the previous proposition and the isomorphisms
    $$Gr^{i}_{F}H^{i}_{c}(X, \C) \cong Gr^{i}_{F}Gr^{W}_{i}H^{i}_{c}(X, \C) \cong  Gr^{i}_{F}Gr^{W}_{i}IH^{i}_{c}(X, \C) \cong Gr^{i}_{F}IH^{i}_{c}(X, \C).$$
    
    To complete the proof, note that by Lemma \ref{lemma:F0DR},
    $$Gr^{F}_{0}DR(IC^{H}_{X}) \simeq Gr^{F}_{0}DR(f_{+}\Q^{H}_{\tX}) \simeq \bR f_{*}\cO_{\tX}[n].$$
    So, if $\tilde{K}:= Cone \bigg(IC^{H}_{X} \rightarrow f_{+}\Q^{H}_{\tX}[n]  \bigg)$, we must have $Gr^{F}_{0}DR(\tilde{K}) \simeq 0.$ Thus, by using the same argument as above, we obtain
       $$Gr^{0}_{F}IH^{i}_{c}(X, \C) \cong  Gr^{0}_{F}H^{i}_{c}(\tX, \C) \quad \text{and} \quad 
    Gr^{i}_{F}IH^{i}_{c}(X, \C) \cong Gr^{i}_{F}H^{i}_{c}(\tX, \C).$$
\end{proof}



\begin{cor}
     Suppose that $X$ is a normal variety, and let $f: \tX \rightarrow X$ be a resolution of singularities. If $\tilde{S}$ is the the smallest closed subset of $X$ such that $X \backslash \tilde{S}$ has rational singularities, then the natural maps
    $$Gr^{0}_{F}H^{i}_{c}(X, \C) \rightarrow Gr^{0}_{F}IH^{i}_{c}(X, \C)$$
    $$Gr^{i}_{F}H^{i}_{c}(X, \C) \rightarrow Gr^{i}_{F}IH^{i}_{c}(X, \C)$$
    are isomorphisms for $0 \leq i \leq  \depth_{\tilde{S}}(\cO_{X}) -2$ and injective for $i =\depth_{\tilde{S}}(\cO_{X}) -1.$
\end{cor}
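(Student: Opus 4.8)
The plan is to deduce this statement directly from Theorem \ref{HodgeThm} by feeding it the vanishing range supplied by Corollary \ref{rat.rmk}, so that no genuinely new argument is required. Concretely, I would set $k := \depth_{\tilde{S}}(\cO_{X}) - 2$. Corollary \ref{rat.rmk} then guarantees that $R^{i}f_{*}\cO_{\tX} = 0$ for $1 \leq i \leq k$, which is precisely the hypothesis needed to invoke the second half of Theorem \ref{HodgeThm}.

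Applying Theorem \ref{HodgeThm} with this value of $k$, I obtain that the natural maps $Gr^{0}_{F}H^{i}_{c}(X, \C) \to Gr^{0}_{F}IH^{i}_{c}(X, \C)$ and $Gr^{i}_{F}H^{i}_{c}(X, \C) \to Gr^{i}_{F}IH^{i}_{c}(X, \C)$ are isomorphisms for $0 \leq i \leq k = \depth_{\tilde{S}}(\cO_{X}) - 2$ and injective for $i = k+1 = \depth_{\tilde{S}}(\cO_{X}) - 1$. This is exactly the asserted conclusion, so the argument is complete once the substitution $k = \depth_{\tilde{S}}(\cO_{X}) - 2$ is made.

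The only point requiring a moment's care is that Theorem \ref{HodgeThm} is stated for normal \emph{quasi-projective} varieties, whereas the corollary is phrased for a general normal variety; I would either carry over the quasi-projectivity hypothesis (as is implicit throughout this section, where the mixed Hodge module formalism requires the embeddings $\iota: X \hookrightarrow X'$ and $\rho: X' \hookrightarrow \cX$ into a smooth projective $\cX$) or reduce to that case. There is essentially no other obstacle. The entire content lies in the two results already established: the abstract cohomological vanishing $R^{i}f_{*}\cO_{\tX} = 0$ of Theorem \ref{HodgeThm} is simply being repackaged, via Corollary \ref{rat.rmk}, as the more geometric depth condition measured along the non-rational locus $\tilde{S}$. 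Thus the hardest part is purely bookkeeping, namely verifying that the index $k$ translates correctly across the two statements.
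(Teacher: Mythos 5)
Your proposal is correct and is essentially identical to the paper's own proof, which likewise deduces the corollary by combining Theorem \ref{HodgeThm} with Corollary \ref{rat.rmk}, i.e., taking $k = \depth_{\tilde{S}}(\cO_{X}) - 2$ so that $R^{i}f_{*}\cO_{\tX} = 0$ for $1 \leq i \leq k$. Your side remark about carrying the quasi-projectivity hypothesis over from Theorem \ref{HodgeThm} is a reasonable observation about the corollary's phrasing, but it changes nothing in the argument.
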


\begin{proof}
    The result follows from Theorem \ref{HodgeThm} and Corollary \ref{rat.rmk}.
    \end{proof}

\begin{cor}
    Suppose that $X$ is a normal projective variety, and let $f: \tX \rightarrow X$ be a resolution of singularities. If $R^{i}f_{*} \cO_{\tX} = 0$ for $1 \leq i \leq k$, then the natural maps
    $$H^{i}(X, \underline{\Omega}^{0}_{X}) \rightarrow  H^{i}(\tX, \cO_{\tX})$$
    $$H^{0}(X, \underline{\Omega}^{i}_{X}) \rightarrow H^{0}(\tX, \Omega_{\tX}^{i})$$
    are isomorphisms for $0 \leq i \leq k$ and injective $i = k+1.$ 
\end{cor}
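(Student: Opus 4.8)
The plan is to reinterpret both the source and the target of each map as graded pieces of the Hodge filtration, and then deduce the corollary from the compact-support comparison obtained from Theorem \ref{HodgeThm}, using that everything in sight is projective. Since $X$ is projective and both $\bR f_{*}\cO_{\tX}$ and the groups $H^{0}(\tX, \Omega^{i}_{\tX})$ are independent of the chosen resolution, I may assume $f$ is projective, so that $\tX$ is smooth projective; in particular $H^{i}_{c} = H^{i}$ and $IH^{i}_{c} = IH^{i}$ throughout.

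First I would record the identifications of the two sources. As $X$ is proper, the de Rham filtration on $H^{i}(X,\C)$ agrees with the Hodge filtration, and the spectral sequence attached to $(\underline{\Omega}^{\bullet}_{X}, F)$ degenerates at $E_{1}$ (part of Du Bois's construction of the mixed Hodge structure, following Deligne). Hence $Gr^{p}_{F}H^{i}(X,\C) \cong \mathbb{H}^{i-p}(X, \underline{\Omega}^{p}_{X})$; specializing to $p = 0$ and $p = i$ gives $Gr^{0}_{F}H^{i}(X,\C) \cong H^{i}(X, \underline{\Omega}^{0}_{X})$ and $Gr^{i}_{F}H^{i}(X,\C) \cong H^{0}(X, \underline{\Omega}^{i}_{X})$, which are exactly the two sources in the statement. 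On the smooth projective $\tX$ the classical Hodge decomposition gives $Gr^{0}_{F}H^{i}(\tX,\C) \cong H^{i}(\tX, \cO_{\tX})$ and $Gr^{i}_{F}H^{i}(\tX,\C) \cong H^{0}(\tX, \Omega^{i}_{\tX})$, matching the two targets.

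Next I would identify the maps themselves. The functorial filtered morphism $(\underline{\Omega}^{\bullet}_{X}, F) \to \bR f_{*}(\underline{\Omega}^{\bullet}_{\tX}, F) = \bR f_{*}(\Omega^{\bullet}_{\tX}, F)$ induces, on passing to $Gr^{0}_{F}$ and $Gr^{i}_{F}$ and to hypercohomology, precisely the natural sheaf-level maps $H^{i}(X, \underline{\Omega}^{0}_{X}) \to H^{i}(\tX, \cO_{\tX})$ and $H^{0}(X, \underline{\Omega}^{i}_{X}) \to H^{0}(\tX, \Omega^{i}_{\tX})$ of the statement (the latter factoring through $\cH^{0}(\underline{\Omega}^{i}_{X}) \to f_{*}\Omega^{i}_{\tX}$). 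Under the identifications of the previous paragraph, the same filtered morphism induces the restriction of the pullback $f^{*}\colon H^{i}(X,\C) \to H^{i}(\tX,\C)$ to the graded pieces, so the maps of the corollary coincide with $Gr^{0}_{F}f^{*}$ and $Gr^{i}_{F}f^{*}$.

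Finally, combining the two halves of Theorem \ref{HodgeThm} and using $H^{i}_{c} = H^{i}$ for the projective varieties $X$ and $\tX$, the composite $Gr^{p}_{F}H^{i}(X,\C) \to Gr^{p}_{F}IH^{i}(X,\C) \xrightarrow{\sim} Gr^{p}_{F}H^{i}(\tX,\C)$, for $p \in \{0, i\}$, is an isomorphism for $0 \le i \le k$ and injective for $i = k+1$. This is exactly the assertion of the corollary. The step requiring the most care is the compatibility in the third paragraph: one must verify that the naive sheaf-theoretic maps agree on the nose with $Gr_{F}f^{*}$, rather than merely up to an automorphism of the graded pieces. Granting Saito's filtered functoriality (compare the remark identifying $Gr^{F}_{-p}DR(\Q^{H}_{X}[n])$ with $\underline{\Omega}^{p}_{X}[n-p]$ and Lemma \ref{lemma:F0DR}) this is bookkeeping, but it is the crux of the argument.
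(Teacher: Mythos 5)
Your proposal is correct and follows essentially the same route as the paper: the paper deduces this corollary directly from Theorem \ref{HodgeThm} (and the corollary immediately following it) by using that $H^{i}_{c}=H^{i}$ for projective $X$, that $E_{1}$-degeneration of the Du Bois--Deligne spectral sequence identifies $H^{i}(X,\underline{\Omega}^{0}_{X})$ and $H^{0}(X,\underline{\Omega}^{i}_{X})$ with $Gr^{0}_{F}H^{i}(X,\C)$ and $Gr^{i}_{F}H^{i}(X,\C)$, and that classical Hodge theory on $\tX$ identifies the targets. The compatibility point you flag in your last paragraph (that the sheaf-level maps agree with $Gr_{F}f^{*}$) is exactly the functoriality the paper invokes implicitly via Saito's and Du Bois's constructions, so your write-up is a faithful, slightly more detailed version of the intended argument.
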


\begin{rmk}
Consider when $X$ is a normal projective variety with $R^{i}f_{*} \cO_{\tX} = 0$ for $1 \leq i \leq k$, where $f: \tX \rightarrow X$ is a resolution of singularities. By the previous corollary, we have the following identifications of Hodge numbers for $0 \leq i \leq k$.
    $$h^{0,i}(X)= \dim H^{i}(X, \underline{\Omega}^{0}_{X}) =\dim  H^{i}(\tX, \cO_{\tX}) = h^{0,i}(\tX)$$
    $$h^{i,0}(X) = \dim H^{0}(X, \underline{\Omega}^{i}_{X}) = H^{0}(\tX, \Omega_{\tX}^{i}) = h^{i,0}(\tX).$$
    Since $\tX$ is smooth, we must also have
    $$h^{0,i}(X) =h^{0,i}(\tX) = h^{i,0}(\tX)= h^{i,0}(X) \quad \text{for $0 \leq i \leq k.$}$$
    For the case when $i = k+1,$ we have inequalities
    $$h^{0,k+1}(X) \leq  h^{0,k+1}(\tX) \quad \text{and} \quad h^{k+1,0}(X)\leq  h^{k+1,0}(\tX).$$
    \end{rmk}
\begin{thm}
Suppose $X$ is a normal quasi-projective variety and $f: \tX \rightarrow X$ is a resolution of singularities. If $R^{1}f_{*}\cO_{\tX} = 0$, then the natural map $H^{i}_{c}(X,\Q) \rightarrow IH^{i}_{c}(X,\Q)$ is an isomorphism for $i \leq 1$ and injective for $i = 2.$ In particular, if $X$ is projective, then $H^{i}(X, \Q)$ has a pure Hodge structure for $i \leq 2$ .
\end{thm}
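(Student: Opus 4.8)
The plan is to factor the comparison map through the long exact sequence of the cone already used in the proof of Theorem~\ref{HodgeThm}. Set $K := Cone\bigl(\Q^{H}_{X}[n] \to IC^{H}_{X}\bigr)$ and write $L^{i} := H^{i-n}_{c}(X, DR(K))$. The exact triangle $\Q^{H}_{X}[n] \to IC^{H}_{X} \to K \xrightarrow{+1}$ induces a long exact sequence of mixed Hodge structures
$$\cdots \to H^{i}_{c}(X,\C) \to IH^{i}_{c}(X,\C) \to L^{i} \to H^{i+1}_{c}(X,\C) \to \cdots,$$
in which the first arrow is the natural map of the theorem. Hence everything reduces to a single claim: $L^{i} = 0$ for $i \le 1$. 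Indeed, this gives $L^{i-1} = L^{i} = 0$ and thus $H^{i}_{c}(X,\C) \xrightarrow{\sim} IH^{i}_{c}(X,\C)$ for $i \le 1$, while $L^{1} = 0$ forces $H^{2}_{c}(X,\C) \to IH^{2}_{c}(X,\C)$ to be injective.

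To prove the claim I would collect three facts about the mixed Hodge structure $L^{i}$. Because $F_{0}DR(K) \simeq DR(K)$, as observed in the proof of Theorem~\ref{HodgeThm}, one has $F^{0}L^{i} = L^{i}$, so $h^{p,q}(L^{i}) = 0$ for $p < 0$ and, by conjugate symmetry, also for $q < 0$; in particular only weights $\ge 0$ occur. Next, $K$ has weight $\le n$ by the theorem of Park and Popa recalled in the remark after Proposition~\ref{injProp}, so the principle of \cite[Prop.~2.26]{saito2} invoked in Lemma~\ref{WeightLemma} gives $Gr^{W}_{j}L^{i} = 0$ for $j > i$. Thus the weights of $L^{i}$ lie in $[0,i]$. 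Finally, the hypothesis $R^{1}f_{*}\cO_{\tX} = 0$ is the case $k = 1$ of Theorem~\ref{HodgeThm}, whose proof already establishes $Gr^{0}_{F}L^{i} = 0$ for $0 \le i \le 1$.

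The vanishing now follows weight by weight. For $i \le 0$ the constraints confine $L^{i}$ to Hodge type $(0,0)$, which coincides with $Gr^{0}_{F}L^{i} = 0$, so $L^{i} = 0$. For $i = 1$ the weights lie in $\{0,1\}$: the weight-zero part is of type $(0,0)$ and dies because $Gr^{0}_{F}L^{1} = 0$, while on the pure weight-one part the same vanishing kills $h^{0,1}$, whence $h^{1,0} = 0$ by Hodge symmetry. Therefore $L^{1} = 0$, which completes the proof of the isomorphism for $i \le 1$ and of injectivity for $i = 2$. For the last assertion, if $X$ is projective then $H^{i}_{c} = H^{i}$ and $IH^{i}_{c} = IH^{i}$, and $IH^{i}(X,\C)$ is pure of weight $i$; for $i \le 1$ purity transfers through the isomorphism, and for $i = 2$ the image of the injection $H^{2}(X,\C) \hookrightarrow IH^{2}(X,\C)$, being a sub-mixed-Hodge-structure of a pure weight-two structure, is pure, so $H^{2}(X,\C)$ is pure of weight $2$.

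I expect the crux to be the passage from $Gr^{0}_{F}L^{i} = 0$, which Theorem~\ref{HodgeThm} hands us, to the full vanishing $L^{i} = 0$. A direct graded-piece comparison of $H^{2}_{c} \to IH^{2}_{c}$ would stall on the $(1,1)$-part of weight two, which is not seen by the $Gr^{0}_{F}$ and $Gr^{2}_{F}$ control available at the boundary degree $i = k+1 = 2$; routing through $L^{1}$ avoids this, since the weight bound $\le i$ keeps $L^{1}$ in weights $\le 1$, where no $(1,1)$-class can appear and Hodge symmetry suffices to finish.
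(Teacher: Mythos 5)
Your proposal is correct and takes essentially the same route as the paper: both arguments reduce to showing $H^{i-n}_{c}(X, DR(K)) = 0$ for $i \leq 1$ via the long exact sequence of the cone $K$, and both obtain this vanishing by combining the weight bound $Gr^{W}_{j} = 0$ for $j > i$, the fact that $F^{0}$ is everything (so only Hodge types $(p,q)$ with $p,q \geq 0$ occur), Hodge symmetry on the weight-graded pieces, and the $Gr^{0}_{F}$-vanishing established in the proof of Theorem \ref{HodgeThm}. The only cosmetic difference is that you source the weight bound from Park--Popa together with Saito's weight estimate, whereas the paper deduces it from its own Proposition \ref{injProp}; both inputs appear in the paper and yield the same constraint.
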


\begin{proof}
     Recall from the proof of Theorem \ref{HodgeThm} we have the long exact sequence
     $$\cdots \rightarrow H^{i}_{c}(X, \C) \rightarrow IH^{i}_{c}(X, \C) \rightarrow H^{i-n}_{c}(X, DR(K)) \rightarrow H^{i+1}_{c}(X, \C) \rightarrow \cdots. $$
     We know by Proposition \ref{injProp} $Gr^{W}_{j}H^{i-n}_{c}(X, DR(K)) = 0$ for $j>i.$ Thus
     $$Gr^{W}_{j}H^{1-n}_{c}(X, DR(K)) = \begin{cases}
        \displaystyle  \bigoplus_{0 \leq p \leq j}Gr^{p}_{F}Gr^{W}_{j}H^{1-n}_{c}(X, DR(K)) & \text{for $j \leq 1$} \\ \\
         0 & \text{otherwise}
     \end{cases} $$
     $$Gr^{W}_{j}H^{0-n}_{c}(X, DR(K)) = \displaystyle \begin{cases}
         \displaystyle Gr^{0}_{F}Gr^{W}_{0}H^{0-n}_{c}(X, DR(K)) \\ \\
         0 & \text{otherwise}
     \end{cases} $$
     By the proof of Theorem \ref{HodgeThm}, we have $Gr^{W}_{j}H^{1-n}_{c}(X, DR(K)) = Gr^{W}_{j}H^{0-n}_{c}(X, DR(K))$ for all $j \in \Z.$ So, we must have $H^{1-n}_{c}(X, DR(K))= H^{0-n}_{c}(X, DR(K)) = 0$, which implies $H^{i}_{c}(X,\Q) \rightarrow IH^{i}_{c}(X, \Q)$ is an isomorphism for $i \leq 1$ and injective for $i = 2$. If $X$ is projective, then $IH^{i}(X, \Q)$ has a pure Hodge structure, which implies $H^{i}(X, \Q)$ also has a pure Hodge structure for $i \leq 2$.
     
\end{proof}


        


     \begin{prop}\label{ExtProp}
   For $i +p \leq n-n_{S} -2,$
   $$\cH^{i+p -n}(Gr^{F}_{-p}DR(IC^{H}_{X})) \cong R^{i}j_{*}\Omega^{p}_{U}.$$
\end{prop}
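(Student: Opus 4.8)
The plan is to exhibit the left-hand side as a Grothendieck dual and then feed it into Corollary \ref{MainCor} with $Z = S$. First I would record the behavior on the smooth locus. Since $IC^{H}_{X}\vert_{U} = \Q^{H}_{U}[n]$, the Remark identifying $Gr^{F}_{-p}DR(\Q^{H}_{X}[n]) \simeq \underline{\Omega}^{p}_{X}[n-p]$ gives, on $U$ (where $\underline{\Omega}^{p}_{U}=\Omega^{p}_{U}$),
$$Gr^{F}_{-p}DR(IC^{H}_{X})\vert_{U} \simeq \Omega^{p}_{U}[n-p].$$

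Next, using Saito's compatibility of the graded de Rham functor with duality together with the self-duality $\D(IC^{H}_{X}) \cong IC^{H}_{X}(n)$, I would write
$$Gr^{F}_{-p}DR(IC^{H}_{X}) \cong \bR \cH om_{X}(\cF^{\bullet}, \omega^{\bullet}_{X}), \qquad \cF^{\bullet} := Gr^{F}_{p-n}DR(IC^{H}_{X}).$$
The correct index ($p-n$) and the absence of stray shifts are forced by restricting to $U$: the right-hand side restricts to $\bR \cH om_{U}(\Omega^{n-p}_{U}[p], \omega_{U}[n]) = \Omega^{p}_{U}[n-p]$, matching the previous display. Tracking the Tate twist in Saito's duality is the fussiest bookkeeping, but the $U$-restriction pins the normalization, so no independent twist computation is needed.

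Then I would check the hypotheses of Theorem \ref{MainThm} for $\cF^{\bullet}$ with $Z=S$. On $U$ we have $\cF^{\bullet}\vert_{U} \simeq \Omega^{n-p}_{U}[p]$, a single locally free---hence maximal Cohen--Macaulay---sheaf placed in degree $-p = -n+\ell$ with $\ell = n-p \in \{0,\dots,n\}$; this gives hypotheses (1)--(3) at once. The decisive hypothesis is (4): I claim $\cH^{i}(\cF^{\bullet}) = \cH^{i}(Gr^{F}_{p-n}DR(IC^{H}_{X})) = 0$ for $i \geq 1$, so that one may take $q = 1$ (note $q = 1 \geq -n+\ell+1 = 1-p$). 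This is the cohomological concentration $Gr^{F}_{\bullet}DR(IC^{H}_{X}) \in D^{\leq 0}_{coh}(X)$ of the graded de Rham complex of a pure Hodge module with strict support, due to Saito; it is precisely what produces the sharp numerical range, and making it precise is the main obstacle.

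Finally I would invoke Corollary \ref{MainCor}, which says the natural map
$$\cH^{r}(\bR \cH om_{X}(\cF^{\bullet}, \omega^{\bullet}_{X})) \longrightarrow \cH^{r}(\bR j_{*}\bR \cH om_{U}(\cF^{\bullet}\vert_{U}, \omega^{\bullet}_{U}))$$
is an isomorphism for $r \leq -n_{S} - q - 1 = -n_{S} - 2$. Since $\bR \cH om_{U}(\cF^{\bullet}\vert_{U}, \omega^{\bullet}_{U}) = \Omega^{p}_{U}[n-p]$, the target is $R^{r+n-p}j_{*}\Omega^{p}_{U}$; putting $r = i+p-n$ turns the source into $\cH^{i+p-n}(Gr^{F}_{-p}DR(IC^{H}_{X}))$, the target into $R^{i}j_{*}\Omega^{p}_{U}$, and the range $r \leq -n_{S}-2$ into $i+p \leq n - n_{S} - 2$. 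This is exactly the asserted isomorphism.
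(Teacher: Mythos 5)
Your proof is correct, but it takes a genuinely different route from the paper's. The paper's own proof is two lines: it quotes from the first author's earlier work the identification $\cH^{i+p-n}(Gr^{F}_{-p}DR(IC^{H}_{X})) \cong R^{i}f_{*}\Omega^{p}_{\tX}(\log E)$ for $i+p \leq n-n_{S}-1$, and then invokes Corollary \ref{LogCor} (itself obtained from Theorem \ref{MainThm} via Grothendieck duality for log forms and Steenbrink's vanishing theorem) to replace $R^{i}f_{*}\Omega^{p}_{\tX}(\log E)$ by $R^{i}j_{*}\Omega^{p}_{U}$ in the range $i+p \leq n-n_{S}-2$. You instead feed $Gr^{F}_{p-n}DR(IC^{H}_{X})$ directly into Corollary \ref{MainCor}, which bypasses the log resolution, Steenbrink vanishing, and the external citation altogether. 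Both of your key inputs are facts the paper itself uses later in the proof of Theorem \ref{MuPoConj}: your duality statement $Gr^{F}_{-p}DR(IC^{H}_{X}) \cong \bR \cH om_{X}(Gr^{F}_{p-n}DR(IC^{H}_{X}), \omega^{\bullet}_{X})$ is (after a shift) exactly the identity $\bR \cH om_{X}(I\underline{\Omega}^{i}_{X}, \omega^{\bullet}_{X}) \cong I\underline{\Omega}^{n-i}_{X}[n]$ stated there, so you can cite that rather than ``pin the index by restriction to $U$'' (which by itself only determines the restrictions of the two sides, not the global isomorphism); and your concentration claim $\cH^{i}(Gr^{F}_{k}DR(IC^{H}_{X})) = 0$ for $i \geq 1$ is precisely \eqref{eq:vanHDRIC}, which follows from the definition of the de Rham complex of a Hodge module and needs neither purity nor strict support---so it is less of an ``obstacle'' than you suggest. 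One point of care: Theorem \ref{MainThm} is phrased in terms of a single global representative $\tcF^{\bullet}$ satisfying (1)--(4), whereas you only verify that $\cF^{\bullet}\vert_{U}$ is quasi-isomorphic to the one-term MCM complex $\Omega^{n-p}_{U}[p]$; this looser reading is, however, the one the paper itself relies on (e.g.\ in Corollary \ref{LogCor}, where the natural representative of $\bR f_{*}\Omega^{n-q}_{\tX}(\log E)(-E)$ is not literally a one-term complex on $U$), and the proof of Theorem \ref{MainThm} only ever uses the quasi-isomorphism class of $\cF^{\bullet}\vert_{U}$ together with hypothesis (4), so your application is sound. As for what each approach buys: the paper's route also yields the log-form comparison $R^{i}f_{*}\Omega^{p}_{\tX}(\log E) \cong R^{i}j_{*}\Omega^{p}_{U}$, of independent interest and valid in a slightly larger range for the first identification; yours is self-contained relative to this paper's toolkit and, from the injectivity clause of Corollary \ref{MainCor}, additionally gives injectivity of the natural map at the boundary value $i+p = n-n_{S}-1$.
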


\begin{proof}
    By \cite{hiatt1}, there is an isomorphism
    $$ \cH^{i+p -n}(Gr^{F}_{-p}DR(IC^{H}_{X})) \cong R^{i}f_{*}\Omega^{p}_{\tX}(\log E) \quad \text{for $i +p\leq n-n_{S} -1.$}$$
    But, by Corollary \ref{LogCor}, we also have $R^{i}f_{*}\Omega^{p}_{\tX}(\log E) \cong R^{i}j_{*}\Omega^{p}_{U} $ for $ i +p \leq n-n_{S}  -2.$
\end{proof}

\begin{thm}\label{LCIThm}
    Let $X$ be an irreducible local complete intersection. 
    \begin{enumerate}

    \item The natural morphism of mixed Hodge structures
    $$H^{i}_{c}(X, \Q) \rightarrow IH^{i}_{c}(X, \Q) $$
    $$H^{i}(X, \Q) \rightarrow IH^{i}(X, \Q)$$
    are isomorphisms  for $0 \leq i \leq n - n_{S} -2$ and injective for $i = n-n_{S} -1;$\\

    \item  If $X$ is projective, then $H^{i}(X,\Q)$ has a pure Hodge structure of weight $i$ whenever $0 \leq i \leq n - n_{S} -1$. Furthermore, whenever $i \leq n - n_{S} -2$,
        $$H^{i}(X, \C) \cong \bigoplus_{p + q = i}H^{q}(X, \underline{\Omega}^{p}_{X}) \cong \bigoplus_{p + q = i}H^{q}(X, \cH^{0}(\underline{\Omega}^{p}_{X})) \cong \bigoplus_{p + q = i}H^{q}(U, \Omega^{p}_{U}) .$$
    \end{enumerate}
   
\end{thm}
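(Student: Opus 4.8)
The plan is to analyze the cone $K := \mathrm{Cone}(\Q^{H}_{X}[n] \to IC^{H}_{X})$ of the canonical map $\alpha_{X}$, which is supported on $S$ because $\alpha_{X}$ is an isomorphism over the smooth locus $U$. The associated long exact sequence
$$\cdots \to H^{i}_{c}(X, \C) \to IH^{i}_{c}(X, \C) \to H^{i-n}_{c}(X, DR(K)) \to H^{i+1}_{c}(X, \C) \to \cdots$$
(together with its analogue for ordinary cohomology) reduces part (1) to the vanishing of $H^{j}_{c}(X, DR(K))$ and $H^{j}(X, DR(K))$ for $j \le -n_{S} - 2$: granting this, the maps are isomorphisms for $i \le n - n_{S} - 2$ and injective for $i = n - n_{S} - 1$, exactly as stated. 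Since the Hodge filtration is strict, its spectral sequence degenerates at $E_{1}$, so $Gr^{p}_{F} H^{j}_{c}(X, DR(K)) = H^{j}_{c}(X, Gr^{F}_{-p} DR(K))$; combining this with the hypercohomology spectral sequence and the fact that $H^{s}_{c} = H^{s} = 0$ for $s < 0$, it suffices to prove the sheaf-level vanishing $\cH^{a}(Gr^{F}_{-p} DR(K)) = 0$ for every $p$ and every $a \le -n_{S} - 2$.

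Next I would feed in the two halves of the triangle $\underline{\Omega}_{X}^{p}[n-p] \to Gr^{F}_{-p} DR(IC^{H}_{X}) \to Gr^{F}_{-p} DR(K)$, using that $Gr^{F}_{-p} DR(\Q^{H}_{X}[n]) \simeq \underline{\Omega}_{X}^{p}[n-p]$ and Proposition \ref{ExtProp}, which gives $\cH^{a}(Gr^{F}_{-p} DR(IC^{H}_{X})) \cong R^{a+n-p} j_{*} \Omega^{p}_{U}$ for $a \le -n_{S} - 2$. Because $\alpha_{X}$ restricts to the identity on $U$, the connecting map is the natural comparison $\cH^{m}(\underline{\Omega}_{X}^{p}) \to R^{m} j_{*} \Omega^{p}_{U}$ induced by $\underline{\Omega}_{X}^{p} \to \bR j_{*} \Omega^{p}_{U}$ (with $m = a+n-p$). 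Thus the cone vanishes in the required range once I show this comparison is an isomorphism in degrees $m \le n - p - n_{S} - 2$ and injective one degree higher.

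The local complete intersection hypothesis enters precisely here, and this is the step I expect to be the main obstacle. For the intersection side the input is already packaged in Proposition \ref{ExtProp}; what remains is to control $\underline{\Omega}_{X}^{p}$ and $R^{m} j_{*}\Omega^{p}_{U}$. For the latter I would use that for an lci the conormal sequence exhibits $\Omega^{1}_{X}$ as a module of projective dimension $\le 1$ over $\cO_{X}$, whence $\Omega^{p}_{X} = \wedge^{p}\Omega^{1}_{X}$ has projective dimension $\le p$ (Lebelt); since an lci is Cohen–Macaulay, Auslander–Buchsbaum gives $\depth_{S} \Omega^{p}_{X} \ge n - n_{S} - p$, so that $R^{m} j_{*}\Omega^{p}_{U} \cong \cH^{m+1}_{S}(\Omega^{p}_{X}) = 0$ for $1 \le m \le n - n_{S} - p - 2$ while $j_{*}\Omega^{p}_{U} = \Omega^{[p]}_{X}$. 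The genuinely delicate point is transferring this depth estimate from $\Omega^{p}_{X}$ to the Du Bois complex, i.e. proving $\cH^{m}(\underline{\Omega}_{X}^{p}) = 0$ for $1 \le m \le n - n_{S} - p - 2$; this is the exact generalization of Theorem \ref{VanishingThm}(1) (the case $p = 0$) to arbitrary $p$, and I would establish it via the natural map $\Omega^{p}_{X} \to \underline{\Omega}_{X}^{p}$ together with the lci depth bound. Note $\cH^{0}(\underline{\Omega}_{X}^{p}) = \Omega^{[p]}_{X}$ already holds in this range, since for an lci $\cO_{X}$ is Cohen–Macaulay and $\tilde{S} \subseteq S$, so $R^{i}f_{*}\cO_{\tX} = 0$ for $1 \le i \le n - n_{\tilde{S}} - 2 \ge n - n_{S} - 2 \ge p$ by Corollary \ref{rat.rmk}, which is what lets me invoke the comparison $\cH^{0}(\underline{\Omega}_{X}^{p}) \cong \Omega^{[p]}_{X}$.

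Finally, for part (2) I would argue as follows. Part (1) yields $H^{i}(X, \C) \cong IH^{i}(X, \C)$ for $i \le n - n_{S} - 2$ and an injection for $i = n - n_{S} - 1$; since $X$ is projective, $IH^{i}(X, \C)$ is pure of weight $i$, and a sub–Hodge structure of a pure Hodge structure is pure, giving purity of $H^{i}(X, \C)$ for $i \le n - n_{S} - 1$. For the Hodge decomposition, Du Bois's $E_{1}$-degeneration of $H^{q}(X, \underline{\Omega}_{X}^{p}) \Rightarrow H^{p+q}(X, \C)$ on a projective variety gives $H^{i}(X, \C) \cong \bigoplus_{p+q=i} H^{q}(X, \underline{\Omega}_{X}^{p})$. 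The vanishing $\cH^{m}(\underline{\Omega}_{X}^{p}) = 0$ for $1 \le m \le n - n_{S} - p - 2$ then forces only $\cH^{0}$ to contribute to $H^{q}(X, \underline{\Omega}_{X}^{p})$ when $p+q = i \le n - n_{S} - 2$, so $H^{q}(X, \underline{\Omega}_{X}^{p}) \cong H^{q}(X, \cH^{0}(\underline{\Omega}_{X}^{p}))$; and $\cH^{0}(\underline{\Omega}_{X}^{p}) = j_{*}\Omega^{p}_{U}$ together with $R^{m} j_{*}\Omega^{p}_{U} = 0$ in the same range (the lci depth bound) gives $H^{q}(X, \cH^{0}(\underline{\Omega}_{X}^{p})) \cong H^{q}(U, \Omega^{p}_{U})$, completing the chain of isomorphisms.
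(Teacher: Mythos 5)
Your overall skeleton for part (1) (long exact sequence for the cone $K$ of $\Q^{H}_{X}[n]\to IC^{H}_{X}$, reduction to the sheaf-level vanishing $\cH^{a}(Gr^{F}_{-p}DR(K))=0$ for $a\le -n_{S}-2$) and your part (2) are reasonable in outline, but part (1) has a genuine gap at exactly the step you yourself flag as delicate: the vanishing $\cH^{m}(\underline{\Omega}^{p}_{X})=0$ for $1\le m\le n-n_{S}-p-2$. This is a theorem of Musta\c{t}\u{a}--Popa, which the paper cites as \cite{MuPo} and does not reprove; your proposed derivation ``via the natural map $\Omega^{p}_{X}\to \underline{\Omega}^{p}_{X}$ together with the lci depth bound'' does not work. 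A projective-dimension/depth bound for $\Omega^{p}_{X}$ (Lebelt plus Auslander--Buchsbaum) does control $R^{m}j_{*}\Omega^{p}_{U}\cong \cH^{m+1}_{S}(\Omega^{p}_{X})$, but it says nothing about the higher cohomology sheaves of $\underline{\Omega}^{p}_{X}$: the cone of $\Omega^{p}_{X}\to\underline{\Omega}^{p}_{X}$ is not governed by depth, and measuring that cone is precisely the content of higher Du Bois theory for local complete intersections. Even granting the Musta\c{t}\u{a}--Popa vanishing, a second gap remains: to kill $\cH^{a}(Gr^{F}_{-p}DR(K))$ at the top of your range you need the map $\cH^{m}(\underline{\Omega}^{p}_{X})\to \cH^{m+p-n}(Gr^{F}_{-p}DR(IC^{H}_{X}))$ to be injective at the boundary degree $m=n-p-n_{S}-1$, where the vanishing no longer applies and Proposition \ref{ExtProp} no longer identifies the target with $R^{m}j_{*}\Omega^{p}_{U}$; none of your stated inputs supplies that injectivity.

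The paper avoids both problems by using the lci hypothesis in a way your proposal never invokes: for a local complete intersection, $\Q^{H}_{X}[n]$ is a single mixed Hodge module, so there is a short exact sequence of mixed Hodge modules $0\to W_{n-1}\Q^{H}_{X}[n]\to \Q^{H}_{X}[n]\to IC^{H}_{X}\to 0$ (your $K$ is just $W_{n-1}\Q^{H}_{X}[n][1]$). The kernel is a mixed Hodge module supported on $S$, and \cite[Lemma 5.10]{hiatt1} gives $\cH^{i}(Gr^{F}_{-p}DR(W_{n-1}\Q^{H}_{X}[n]))=0$ for all $i<-n_{S}$; this single support-dimension bound yields at once the hypercohomology vanishing needed for part (1) and the boundary injectivity, with no input whatsoever about $\underline{\Omega}^{p}_{X}$. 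The Musta\c{t}\u{a}--Popa theorem enters the paper only in part (2), as a citation, where combined with Proposition \ref{ExtProp} and the part (1) sheaf vanishing it produces $R^{i}j_{*}\Omega^{p}_{U}=0$ and the chain $H^{q}(X,\underline{\Omega}^{p}_{X})\cong H^{q}(X,\cH^{0}(\underline{\Omega}^{p}_{X}))\cong H^{q}(U,\Omega^{p}_{U})$ (so the paper does not need your Lebelt-type argument either). If you reorganize part (1) around the perversity of $\Q_{X}[n]$ for local complete intersections and cite \cite{MuPo} rather than attempt to reprove it, your outline essentially becomes the paper's proof.
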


\begin{proof}
     Since $X$ is a local complete intersection, $\Q^{H}_{X}[n]$ is a mixed Hodge module, and we have the short exact sequence
     $$0 \rightarrow W_{n-1}\Q^{H}_{X}[n] \rightarrow \Q^{H}_{X}[n] \rightarrow IC^{H}_{X} \rightarrow 0.$$
     By \cite[Lemma 5.10]{hiatt1},
     $$\cH^{i}(Gr^{F}_{-p}DR(W_{n-1}\Q^{H}_{X}[n])) = 0 \quad \text{for $p \in \Z$ and $i  < - n_{S}.$}$$
     By the spectral sequence \eqref{eq:deRham}, we obtain
    $$^{d}Gr^{p}_{F}H^{i-n}_{c}(X, DR(W_{n-1}\Q^{H}_{X}[n])) = 0 \quad \text{for $p \in \Z$ and $i \leq n - n_{S} -1.$}$$
        $$^{d}Gr^{p}_{F}H^{i-n}(X, DR(W_{n-1}\Q^{H}_{X}[n])) = 0 \quad \text{for $p \in \Z$ and $i \leq n - n_{S} -1.$}$$
        Thus $H^{i-n}_{c}(X, DR(W_{n-1}\Q^{H}_{X}[n])) = H^{i-n}(X, DR(W_{n-1}\Q^{H}_{X}[n]))= 0$ for $i \leq n - n_{S} -1.$

     If $X$ is projective, $ IH^{i}(X, \C)$ has a pure Hodge structure. By $(1)$, whenever $i \leq n - n_{S} -1$ we always have an inclusion map $H^{i}(X, \C) \hookrightarrow IH^{i}(X, \C)$. Thus $H^{i}(X, \C)$ has a pure Hodge structure for $i \leq n - n_{S} -1.$ Also, by \cite{MuPo}, $\cH^{i}(\underline{\Omega}^{p}_{X}) \cong 0$ for $1 \leq i \leq n -n_{S} - p -2.$ Therefore, by Proposition \ref{ExtProp},
      $$0 = \cH^{i}(\underline{\Omega}^{p}_{X}) \cong \cH^{i+p -n}(Gr^{F}_{-p}DR(IC^{H}_{X})) \cong  R^{i}j_{*}\Omega^{p}_{U} \quad \text{for $1 \leq i \leq n - n_{S} - p -2.$}$$
      $$\cH^{0}(\underline{\Omega}^{p}_{X}) \cong j_{*}\Omega^{p}_{X} \quad \text{for $p\leq n - n_{S} -2.$}$$
       So, for $0 \leq i \leq n- n_{S} -2,$
        $$H^{i}(X, \C) \cong \bigoplus_{p + q = i}H^{q}(X, \underline{\Omega}^{p}_{X}) \cong \bigoplus_{p + q = i}H^{q}(X, \cH^{0}(\underline{\Omega}^{p}_{X})) \cong \bigoplus_{p + q = i}H^{q}(X, \bR j_{*}\Omega^{p}_{U}) \cong \bigoplus_{p + q = i}H^{q}(U, \Omega^{p}_{U}).$$
\end{proof}

\begin{rmk}
    In the case of a local complete intersection, Theorem \ref{LCIThm} agrees with a result by Park and Popa \cite[Cor. 11.9]{ParkPopa}.
\end{rmk}

\section{On a conjecture of Musta\c{t}\u{a} and Popa}

Recall that we have a closed embedding $i: X \hookrightarrow Y$ into a smooth variety of dimension $m$. If we consider $\D(i_{+}\Q^{H}_{X}[n]) \in D^{b}MHM_{X}(Y),$ the dual of $i_{+}\Q^{H}_{X}[n]$, then the underlying left $\cD_{Y}$-module of the mixed Hodge module $\cH^{i}(\D(i_{+}\Q^{H}_{X}[n]))$ is precisely the local cohomology sheaf $\cH^{i+d}_{X}(\cO_{Y})$, where $d = \dim Y - \dim X.$ A fascinating result of Musta\c{t}\u{a} and Popa describes the \emph{local cohomological dimension $X$}
 $$\lcd(Y,X) = max \{q \hspace{.05in} \vert \hspace{.05in}  \cH^{q}_{X}(\cO_{Y}) \neq 0 \}$$
 in terms of log resolutions and log forms.

 \begin{thm}\cite{MuPo}\label{MuPoThm}
    Let $X$ be a closed subscheme of $Y$, and $c$ a positive integer. Then the following are equivalent:
    \begin{enumerate}
        \item $\lcd(Y,X) \leq c$

        \item For any (some) log resolution $f: \tilde{Y} \rightarrow Y$ of the pair $(Y,X)$, assumed to be an isomorphism over the complement of $X$ in $Y$, if $D = f^{-1}(X)_{red}$, then 
        $$R^{i+j}f_{*}\Omega^{m-i}_{\tilde{Y}}(\log D) = 0 \quad \text{for all $j \geq c, i \geq 0$}.$$
    \end{enumerate}
 \end{thm}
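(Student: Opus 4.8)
The plan is to reduce the statement to a vanishing of graded pieces of a Hodge--de Rham complex and then to compute those pieces on the log resolution. As recalled just above the statement, if $N := i_{+}i^{!}\Q^{H}_{Y}[m] \in D^{b}MHM(Y)$ denotes the mixed Hodge module incarnation of local cohomology, then the underlying $\cD_{Y}$-module of $\cH^{\ell}(N)$ is $\cH^{\ell+m}_{X}(\cO_{Y})$, so that $\lcd(Y,X)\le c$ is equivalent to the vanishing $\cH^{\ell}(N)=0$ for all $\ell>c-m$. The first step is therefore to translate this $\cD$-module vanishing into a statement about the associated graded of the de Rham complex. Because $Y$ is smooth and the objects in sight are mixed Hodge modules, the Hodge filtration is strict, so $Gr^{F}_{-p}$ commutes with taking cohomology sheaves of $DR_{Y}(N)$; moreover a mixed Hodge module is zero as soon as all of its graded de Rham complexes vanish. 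Consequently $\lcd(Y,X)\le c$ holds if and only if $\cH^{\ell}\big(Gr^{F}_{-p}DR_{Y}(N)\big)=0$ for every $p$ and every $\ell$ in the corresponding range.

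The second step is to compute $Gr^{F}_{-p}DR_{Y}(N)$ on the resolution $f:\tY\to Y$. I would use the open--closed triangle $N \to \Q^{H}_{Y}[m] \to \bR j_{*}\Q^{H}_{U}[m]\xrightarrow{+1}$, where $U=Y\setminus X$ and $j:U\hookrightarrow Y$; applying $Gr^{F}_{-p}DR_{Y}$ gives an exact triangle whose first two terms are $Gr^{F}_{-p}DR_{Y}(\Q^{H}_{Y}[m])\simeq \Omega^{p}_{Y}[m-p]$ (a single sheaf) and $Gr^{F}_{-p}DR_{Y}(\bR j_{*}\Q^{H}_{U}[m])$. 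The heart of the matter is Saito's identification of this last complex with the log--de Rham pieces on the resolution, namely a quasi-isomorphism of the shape $Gr^{F}_{-p}DR_{Y}(\bR j_{*}\Q^{H}_{U}[m])\simeq \bR f_{*}\Omega^{p}_{\tY}(\log D)$ up to shift; this is the exact analogue, in the smooth--ambient setting, of the computation $Gr^{F}_{-p}DR(i_{+}\Q^{H}_{X}[n])\simeq i_{*}\underline{\Omega}^{p}_{X}[n-p]$ used earlier in the paper. Since $\Omega^{p}_{Y}$ contributes only in a single low cohomological degree, the higher cohomology sheaves of $Gr^{F}_{-p}DR_{Y}(N)$ are exactly the higher direct images $R^{q}f_{*}\Omega^{p}_{\tY}(\log D)$.

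The third step is index bookkeeping. After matching the shifts, the condition that $\cH^{\ell}(Gr^{F}_{-p}DR_{Y}(N))=0$ for $\ell$ above the critical value becomes the vanishing of $R^{q}f_{*}\Omega^{p}_{\tY}(\log D)$ in the stated range; writing $p=m-i$ (so that $i\ge 0$) and $q=i+j$ converts this into $R^{i+j}f_{*}\Omega^{m-i}_{\tY}(\log D)=0$ for $j\ge c$. To arrive at the form degree $m-i$ rather than $i$, and at the untwisted sheaf $\Omega^{m-i}_{\tY}(\log D)$, one uses Grothendieck--Serre duality on $\tY$ together with the log-duality isomorphism $\Omega^{p}_{\tY}(\log D)^{\vee}\cong \Omega^{m-p}_{\tY}(\log D)(-D)\otimes \omega_{\tY}^{-1}$, which is the same mechanism that powers the Steenbrink vanishing theorem invoked in Corollary \ref{LogCor}. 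Finally, the independence of the criterion from the choice of resolution (the ``any (some)'' clause) follows because both $\lcd(Y,X)$ and the complexes $\bR f_{*}\Omega^{\bullet}_{\tY}(\log D)$ are birational invariants of the pair.

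The step I expect to be the main obstacle is the second one: pinning down Saito's computation of the Hodge filtration on $\bR j_{*}\Q^{H}_{U}[m]$, equivalently on the localization $\cO_{Y}(*X)$, in terms of $\bR f_{*}\Omega^{\bullet}_{\tY}(\log D)$ with the correct filtration indexing and shifts, and then carrying it through the duality of the third step so that the two indices $j\ge c$ and $i\ge 0$ emerge exactly. The reinterpretation of $\lcd$ and the strictness formalism are robust, but the precise shift conventions in $DR_{Y}$ and in Saito's log formula are where the argument must be handled with care.
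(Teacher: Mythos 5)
First, a point of comparison: the paper contains no proof of this statement. Theorem \ref{MuPoThm} is quoted as a result of \cite{MuPo}, so your proposal has to be measured against Musta\c{t}\u{a}--Popa's own argument, and in outline you have reconstructed its skeleton correctly. Your first two steps are exactly their strategy: interpret $\lcd(Y,X)\leq c$ as vanishing of the mixed Hodge modules $\cH^{\ell}(N)$ for $N=i_{+}i^{!}\Q^{H}_{Y}[m]$, use that a mixed Hodge module vanishes if and only if all of its graded de Rham complexes vanish together with strictness to commute $Gr^{F}DR$ with cohomology, and then compute $Gr^{F}_{-p}DR(N)$ from the localization triangle $N\to \Q^{H}_{Y}[m]\to j_{*}\Q^{H}_{U}[m]\xrightarrow{+1}$, using $j_{*}\Q^{H}_{U}[m]\simeq f_{+}\tilde{j}_{*}\Q^{H}_{\tilde{U}}[m]$ (valid since $f$ is an isomorphism over $U$) and Saito's formula $Gr^{F}_{-p}DR(\tilde{j}_{*}\Q^{H}_{\tilde{U}}[m])\simeq \Omega^{p}_{\tY}(\log D)[m-p]$, pushed forward through the proper map $f$.

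The genuine flaw is your third step: no duality of any kind enters the bookkeeping, and invoking it would derail the proof. With Saito's conventions (the ones the paper uses just before the statement, where the $\cD$-module underlying $\cH^{i}(\D(i_{+}\Q^{H}_{X}[n]))$ is $\cH^{i+d}_{X}(\cO_{Y})$), the complex underlying $N$ is $\bR\underline{\Gamma}_{X}(\cO_{Y})$ with cohomology $\cH^{q}_{X}(\cO_{Y})$ in degree $q$, so the criterion is $\cH^{\ell}(N)=0$ for $\ell>c$ (not $\ell>c-m$ as you wrote). Then for $\ell>c\geq 1$ the long exact sequence of the triangle gives $\cH^{\ell}(Gr^{F}_{-p}DR(N))\cong R^{\ell-1+m-p}f_{*}\Omega^{p}_{\tY}(\log D)$, because $Gr^{F}_{-p}DR(\Q^{H}_{Y}[m])=\Omega^{p}_{Y}[m-p]$ is concentrated in degree $p-m\leq 0<\ell-1$. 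Substituting $i=m-p\geq 0$ and $j=\ell-1\geq c$ yields exactly $R^{i+j}f_{*}\Omega^{m-i}_{\tY}(\log D)=0$ for all $i\geq 0$, $j\geq c$: the form degree $m-i$ and the \emph{untwisted} log sheaf come out directly, since $\Omega^{m-i}_{\tY}(\log D)$ is literally the graded piece $Gr^{F}_{-(m-i)}DR$, and the relabeling $p=m-i$ is pure notation. Grothendieck--Serre duality combined with $\Omega^{p}_{\tY}(\log D)^{\vee}\otimes\omega_{\tY}\cong\Omega^{m-p}_{\tY}(\log D)(-D)$ would instead relate the complex $\bR f_{*}\Omega^{p}_{\tY}(\log D)$ to the dual of $\bR f_{*}\Omega^{m-p}_{\tY}(\log D)(-D)$; this introduces the twist $(-D)$ (the sheaves of Steenbrink's vanishing theorem, as in Corollary \ref{LogCor}, which do not appear in the statement) and, being a duality of complexes over $Y$, it does not translate sheaf-by-sheaf vanishing of higher direct images into the asserted range. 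So the step you flagged as the ``main obstacle'' is not an obstacle at all once the shifts are tracked consistently, while the duality you planned to use there is precisely what must be removed.
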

 Musta\c{t}\u{a} and Popa propose the following conjecture, which concerns the depth of $\cO_{X}$.
 \begin{conj}\label{DepthConj.}
     If $\depth(\cO_{X}) \geq i +2$, then $R^{m-2}f_{*}\Omega^{m-i}_{\tilde{Y}}(\log D) = 0.$
 \end{conj}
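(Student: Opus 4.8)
The plan is to deduce the conjecture from the depth statement that motivates it, namely $\depth(\underline{\Omega}^{i}_{X}) \geq 2$, using the equivalence between the two forms of the Musta\c{t}\u{a}--Popa conjecture. By Theorem \ref{MuPoThm} and the surrounding theory of \cite{MuPo}, the sheaf $R^{m-2}f_{*}\Omega^{m-i}_{\tilde{Y}}(\log D)$ is one of the graded pieces of the Hodge filtration on the local cohomology $\cD$-module $\cH^{q}_{X}(\cO_{Y})$, equivalently on the underlying module of $\cH^{q}(\D(i_{+}\Q^{H}_{X}[n]))$, and its vanishing is precisely the obstruction to $\depth(\underline{\Omega}^{i}_{X}) \geq 2$. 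Thus I would first reduce to proving $\depth(\underline{\Omega}^{i}_{X}) \geq 2$ under the hypothesis $\depth(\cO_{X}) \geq i+2$.

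To prove the depth statement, I would feed the depth hypothesis into the vanishing machinery of Section 1. Let $\tilde{S}$ be the smallest closed subset with $X \setminus \tilde{S}$ having rational singularities and $n_{\tilde{S}} = \dim \tilde{S}$, and let $\pi: \tX \rightarrow X$ be a resolution of $X$ itself. By Corollary \ref{rat.rmk} and Remark \ref{rat.rmk2}, the hypothesis gives $R^{j}\pi_{*}\cO_{\tX} = 0$ for $1 \leq j \leq \depth(\cO_{X}) - n_{\tilde{S}} - 2$. When $n_{\tilde{S}} = 0$ this range is $1 \leq j \leq i$, so Theorem \ref{DiffThm} applies and shows that $\cH^{0}(\underline{\Omega}^{i}_{X})$, $\tilde{\Omega}_{X}^{i}$, and $\tilde{\Omega}_{X}^{i}(\log)$ are all reflexive. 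A reflexive coherent sheaf on a normal variety is $S_{2}$, hence has depth $\geq 2$ along the singular locus; combined with the vanishing of the higher cohomology sheaves $\cH^{>0}(\underline{\Omega}^{i}_{X})$ in the relevant range (the $\underline{\Omega}^{i}$-analogue invoked in the proof of Theorem \ref{LCIThm}), this yields $\depth(\underline{\Omega}^{i}_{X}) \geq 2$, and therefore $R^{m-2}f_{*}\Omega^{m-i}_{\tilde{Y}}(\log D) = 0$.

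The main obstacle is the term $n_{\tilde{S}}$. The argument above proves the conjecture exactly when $X$ has rational singularities away from a finite set, and otherwise only establishes $\depth(\underline{\Omega}^{i}_{X}) \geq 2$ under the stronger hypothesis $\depth(\cO_{X}) \geq i + n_{\tilde{S}} + 2$. The loss of $n_{\tilde{S}}$ is intrinsic to this method: Corollary \ref{rat.rmk} is obtained by comparing $R^{j}\pi_{*}\cO_{\tX}$ with local cohomology supported on $\tilde{S}$ through Grauert--Riemenschneider, and passing from $\depth_{\tilde{S}}$ to $\depth$ costs exactly $\dim \tilde{S}$ by Lemma \ref{depthlemma}. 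Closing this gap, and so proving the conjecture in full, would require controlling the higher direct images $R^{j}\pi_{*}\cO_{\tX}$, equivalently the higher Du Bois cohomology sheaves $\cH^{j}(\underline{\Omega}^{0}_{X})$, along a positive-dimensional non-rational locus without the dimension penalty. This lies beyond what the present vanishing theorems provide, which is exactly why only a partial answer is attainable here.
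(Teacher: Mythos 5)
Your top-level architecture is the same as the paper's: reduce the conjecture to the depth statement $\depth(\underline{\Omega}^{i}_{X}) \geq 2$ via the Musta\c{t}\u{a}--Popa equivalence, convert the hypothesis $\depth(\cO_{X}) \geq i+2$ into the vanishing $R^{j}f_{*}\cO_{\tX} = 0$ in the range $1 \leq j \leq \depth(\cO_{X}) - n_{\tilde{S}} - 2$ via Corollary \ref{rat.rmk}, Remark \ref{rat.rmk2} and Lemma \ref{depthlemma}, and accept that this settles the conjecture only when the non-rational locus $\tilde{S}$ is finite. You also correctly diagnose the source of the $n_{\tilde{S}}$ penalty. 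However, your middle step --- the proof of the depth statement itself --- has a genuine gap. You claim $\depth(\underline{\Omega}^{i}_{X}) \geq 2$ follows from reflexivity of $\cH^{0}(\underline{\Omega}^{i}_{X})$ ``combined with the vanishing of the higher cohomology sheaves $\cH^{>0}(\underline{\Omega}^{i}_{X})$.'' That vanishing is not available here: the result you point to, invoked in the proof of Theorem \ref{LCIThm}, is Musta\c{t}\u{a}--Popa's theorem for \emph{local complete intersections}, and the paper states explicitly that for general $X$ and $i>0$ it can say nothing about $\cH^{>0}(\underline{\Omega}^{i}_{X})$; only the case $i=0$ is controlled (Theorem \ref{VanishingThm}). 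Since $\depth(\underline{\Omega}^{i}_{X}) \geq 2$ is a condition on the whole complex --- it requires $\cH^{1}(\bR \underline{\Gamma}_{x}(\underline{\Omega}^{i}_{X})) = 0$, to which $\underline{\Gamma}_{x}(\cH^{1}(\underline{\Omega}^{i}_{X}))$ contributes through the local-cohomology spectral sequence --- reflexivity of $\cH^{0}$ alone cannot close this step.

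There is also a circularity problem: the reflexivity you invoke (Theorem \ref{DiffThm} and Corollary \ref{cor:Omegareflex}) is deduced in the paper from Theorem \ref{MuPoConj}, which is exactly the depth statement you are trying to establish. The paper's actual argument for Theorem \ref{MuPoConj} is built precisely to avoid needing $\cH^{>0}(\underline{\Omega}^{i}_{X}) = 0$: it forms the cone $K$ of $\Q^{H}_{X}[n] \rightarrow IC^{H}_{X}$ and the triangle $\underline{\Omega}^{i}_{X} \rightarrow I\underline{\Omega}^{i}_{X} \rightarrow \Omega^{i}(K) \xrightarrow{+1}$, identifies $\cH^{1}(\bR \underline{\Gamma}_{x}(\underline{\Omega}^{i}_{X})) \cong \underline{\Gamma}_{x}(\cH^{0}(\Omega^{i}(K)))$ using the duality $\bR \cH om_{X}(I\underline{\Omega}^{i}_{X}, \omega^{\bullet}_{X}) \cong I\underline{\Omega}^{n-i}_{X}[n]$ coming from purity of $IC^{H}_{X}$, and then kills this group using $Gr^{0}_{F}H^{i-n}_{c}(X, DR(K)) = 0$ (Theorem \ref{HodgeThm}), the weight bound of Proposition \ref{injProp}, and a spectral-sequence collapse, together with torsion-freeness of $\cH^{0}(\underline{\Omega}^{i}_{X})$ (Lemma \ref{lemma:dBtorsionfree}). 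To repair your proposal you should replace your middle paragraph by an appeal to Theorem \ref{MuPoConj} (or reproduce that Hodge-module argument); as written, the key step does not go through.
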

 Here, we have $\depth(\cO_{X})$ to be taken over all closed points of $X$. The conjecture was proven by Musta\c{t}\u{a} and Popa in the case of isolated singularities \cite{MuPo}. Although it is known that the ideal pattern
$$\depth(\cO_{X}) \geq k \Rightarrow \lcd(Y, X) \leq m -k$$
stops when $k =3$, observe that  the conjecture does relate $\depth(\cO_{X})$ with $\lcd(Y, X)$ because of Theorem \ref{MuPoThm}. 

 Conjecture \ref{DepthConj.} may be stated in the terms of the complex $\underline{\Omega}^{i}_{X}.$ It was shown in \cite{MuPo} that there is an isomorphism
$$R^{m-2}f_{*}\Omega^{m-i}_{\tilde{Y}}(\log D) \cong \cH^{-1}(\bR \cH om_{Y}(\underline{\Omega}^{i}_{X}, \omega_{Y})).$$ If $x \in X$ is a closed point, recall that we define 
$$\depth((\underline{\Omega}^{i}_{X})_{x}) := \min\{i \hspace{.03in} | \hspace{.03in} \cH^{-i}(\bR \cH om_{X}((\underline{\Omega}^{i}_{X}, \omega_{X}^{\bullet}))_{x})\neq 0\} = \min \{i \hspace{.03in} | \hspace{.03in} \cH^{i}(\bR \underline{\Gamma_{x}}((\underline{\Omega}^{i}_{X})_{x}) \neq 0 \}$$ 
$$\depth(\underline{\Omega}^{i}_{X}):= \min_{x \in X}\depth((\underline{\Omega}^{i}_{X})_{x}).$$

\begin{lemma}\cite{MuPo}
    If $X$ is reduced and $\dim X \geq 2$, then for any $k \geq 0$ we have an equivalence
    $$\text{$R^{m-2}f_{*}\Omega^{m-i}_{\tilde{Y}}(\log D) = 0$ if and only if $\depth(\underline{\Omega}^{i}_{X}) \geq 2.$}$$
\end{lemma}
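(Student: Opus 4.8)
The plan is to reduce the vanishing of the log form $R^{m-2}f_{*}\Omega^{m-i}_{\tilde{Y}}(\log D)$ to the vanishing of the single coherent sheaf $\cH^{-1}(\bR\cH om_{X}(\underline{\Omega}^{i}_{X},\omega^{\bullet}_{X}))$ on $X$, and then to identify this latter vanishing with the depth condition by unwinding the definition of $\depth(\underline{\Omega}^{i}_{X})$. First I would invoke the isomorphism $R^{m-2}f_{*}\Omega^{m-i}_{\tilde{Y}}(\log D)\cong \cH^{-1}(\bR\cH om_{Y}(\underline{\Omega}^{i}_{X},\omega_{Y}))$ recalled above from \cite{MuPo}, and transport the right-hand side from $Y$ to $X$ using Grothendieck duality for the closed immersion $i:X\hookrightarrow Y$. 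Since $\omega^{\bullet}_{X}\simeq i^{!}\omega_{Y}$, duality gives $\bR\cH om_{Y}(\underline{\Omega}^{i}_{X},\omega_{Y})\simeq i_{*}\bR\cH om_{X}(\underline{\Omega}^{i}_{X},\omega^{\bullet}_{X})$. Hence $R^{m-2}f_{*}\Omega^{m-i}_{\tilde{Y}}(\log D)=0$ if and only if the coherent sheaf $\cH^{-1}(\bR\cH om_{X}(\underline{\Omega}^{i}_{X},\omega^{\bullet}_{X}))$ vanishes, and by coherence this is equivalent to the vanishing of its stalk at every closed point $x\in X$.

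Next I would unwind the definition of depth. By the definition recalled above, $\depth(\underline{\Omega}^{i}_{X})\geq 2$ holds exactly when $\cH^{-j}(\bR\cH om_{X}(\underline{\Omega}^{i}_{X},\omega^{\bullet}_{X}))_{x}=0$ for $j=0,1$ and all closed $x$, equivalently (by the local-duality form of the definition) when $\cH^{j}(\bR\underline{\Gamma}_{x}((\underline{\Omega}^{i}_{X})_{x}))=0$ for $j=0,1$. The case $j=1$ is precisely the condition isolated in the previous paragraph, so it remains to show that the case $j=0$ is automatic under the standing hypotheses. Since $\underline{\Omega}^{i}_{X}$ has no cohomology in negative degrees, the hypercohomology spectral sequence identifies $\cH^{0}(\bR\underline{\Gamma}_{x}((\underline{\Omega}^{i}_{X})_{x}))$ with $\Gamma_{x}(\cH^{0}(\underline{\Omega}^{i}_{X})_{x})$, the module of sections supported at $x$. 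As $\cH^{0}(\underline{\Omega}^{i}_{X})$ is torsion free by Lemma \ref{lemma:dBtorsionfree} and $\dim X\geq 2$, any such section is killed by a non-zerodivisor and therefore vanishes. Thus $\depth(\underline{\Omega}^{i}_{X})\geq 1$ always, and $\depth(\underline{\Omega}^{i}_{X})\geq 2$ if and only if $\cH^{-1}(\bR\cH om_{X}(\underline{\Omega}^{i}_{X},\omega^{\bullet}_{X}))=0$, which by the first paragraph is equivalent to $R^{m-2}f_{*}\Omega^{m-i}_{\tilde{Y}}(\log D)=0$.

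The step I expect to be the main obstacle is the bookkeeping that makes the threshold exactly $2$ rather than $1$: one must match the normalization of the Musta\c{t}\u{a}--Popa log form against the derived dual on $X$, apply duality for the immersion correctly, and above all verify that the degree-$0$ part of $\bR\cH om_{X}(\underline{\Omega}^{i}_{X},\omega^{\bullet}_{X})$ always vanishes, so that the two-term depth-$2$ condition collapses onto the single sheaf $\cH^{-1}$ appearing in the isomorphism. The torsion-freeness input together with the hypothesis $\dim X\geq 2$ (which also ensures depth $2$ is attainable at a closed point) is exactly what upgrades the automatic bound $\depth\geq 1$ to the clean equivalence with $\depth\geq 2$.
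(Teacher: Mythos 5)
Your proposal is correct, and it reconstructs what is essentially the intended argument: note that the paper itself gives no proof of this lemma, citing \cite{MuPo} instead, so the comparison here is against the argument the surrounding text is clearly set up to support. Your route --- transporting the Musta\c{t}\u{a}--Popa isomorphism to $X$ by duality for the closed immersion, reading off the depth condition from the two equivalent formulations already built into the paper's definition of $\depth((\underline{\Omega}^{i}_{X})_{x})$, and then killing the degree-$0$ local cohomology $\underline{\Gamma}_{x}(\cH^{0}(\underline{\Omega}^{i}_{X})_{x})$ via the torsion-freeness of $\cH^{0}(\underline{\Omega}^{i}_{X})$ (Lemma \ref{lemma:dBtorsionfree}) --- is exactly the mechanism that makes the threshold $2$ collapse onto the single sheaf $\cH^{-1}(\bR \cH om_{X}(\underline{\Omega}^{i}_{X}, \omega_{X}^{\bullet}))$, and matches the source's argument; the only cosmetic point is that vanishing of $\cH^{j}(\bR\underline{\Gamma}_{x})$ for $j<0$ should also be noted (it is automatic since $\underline{\Omega}^{i}_{X}\in D^{\geq 0}$), which your spectral-sequence remark already implicitly covers.
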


\begin{thm}\label{MuPoConj}
    Suppose that $X$ is a normal variety, and let $f: \tX \rightarrow X$ be a resolution of singularities. If $R^{i}f_{*} \cO_{\tX} = 0$ for $1 \leq i \leq k$, then $$\depth(\underline{\Omega}^{i}_{X}) \geq 2 \quad \text{for $i \leq k.$}$$
\end{thm}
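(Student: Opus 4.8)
The plan is to use the preceding lemma of Musta\c{t}\u{a}--Popa to convert the depth estimate into a single sheaf vanishing, and then to establish that vanishing by passing to the dual mixed Hodge module and exploiting the triangle $\Q^H_X[n]\to IC^H_X\to K$. By that lemma it suffices to prove $R^{m-2}f_*\Omega^{m-i}_{\tY}(\log D)=0$ for $i\le k$, and by the displayed isomorphism this is the vanishing of $\cH^{-1}(\bR \cH om_Y(\underline{\Omega}_X^i,\omega_Y))$. Using Grothendieck duality for the closed embedding $X\hookrightarrow Y$ together with Saito's filtered duality $\D\, Gr^F_{-i}DR(\Q^H_X[n])\cong Gr^F_i DR(\D\Q^H_X[n])$ and the identification $Gr^F_{-i}DR(\Q^H_X[n])\cong \underline{\Omega}_X^i[n-i]$, this vanishing is, up to a shift, that of a single (next-to-top) cohomology sheaf of $Gr^F_i DR(\D\Q^H_X[n])$. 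Consistently, the $\depth\ge1$ half is automatic: $\cH^0(\underline{\Omega}_X^i)$ is torsion free by Lemma \ref{lemma:dBtorsionfree}, so it has no sections supported at a point, and for $i\le k$ it is even reflexive by Theorem \ref{DiffThm}, which accounts for the $\cH^0$-contribution to the depth.

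I would then dualize the triangle $\Q^H_X[n]\to IC^H_X\to K\xrightarrow{+1}$. Since $IC^H_X$ is self-dual up to a Tate twist (which only reindexes the Hodge filtration), this gives $\D K\to IC^H_X\to \D\Q^H_X[n]\xrightarrow{+1}$, and applying $Gr^F_i DR$ produces a long exact sequence of cohomology sheaves. The $IC^H_X$-term is computed by Proposition \ref{ExtProp}, together with Corollary \ref{LogCor}, in terms of $R^\bullet j_*\Omega_U^i$; the crucial input is control of the cone, which is where the hypothesis enters. Indeed, in the proof of Theorem \ref{HodgeThm} the assumption $R^i f_*\cO_{\tX}=0$ for $1\le i\le k$ was converted, through Theorem \ref{VanishingThm}, into $\cH^l(Gr^F_0 DR(K))=0$ for $-n\le l\le -n+k$.

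The heart of the argument, and the step I expect to be hardest, is to propagate this control from the level $p=0$, where the hypothesis bears directly on $\underline{\Omega}_X^0$, to every level $p=i\le k$, so that in the relevant degree the (generally nonzero) contribution of $IC^H_X$ is killed by the connecting map from the cone. This is a genuine transfer: it cannot be read off from a bound on $\lcd(Y,X)$, since the local cohomological dimension is governed by the \emph{top} cohomology sheaf of $\underline{\Omega}_X^0$, which is irrelevant to $\depth\ge2$. Instead I would use the strictness of the Hodge filtration, i.e. the degeneration of the spectral sequence \eqref{eq:deRham}, together with the weight formalism, exactly as in the proofs of Theorem \ref{HodgeThm} and Theorem \ref{VanishingThm}. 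Once the cone is shown to contribute nothing in this single degree for all $i\le k$, the long exact sequence forces the required vanishing of $\cH^{-1}(\bR\cH om_Y(\underline{\Omega}_X^i,\omega_Y))$, and hence $\depth(\underline{\Omega}_X^i)\ge2$ for $i\le k$.
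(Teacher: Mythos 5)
Your skeleton is the same as the paper's: the triangle $\Q^{H}_{X}[n]\to IC^{H}_{X}\to K$, self-duality of $IC^{H}_{X}$ to dispose of the $IC$ term, and the hypothesis entering through Theorem \ref{VanishingThm}/Theorem \ref{HodgeThm} at the level $p=0$ of the cone. (Dualizing the triangle first, as you do, versus taking local cohomology $\bR\underline{\Gamma}_{x}$ of the undualized triangle, as the paper does, are equivalent via local duality, so this is not a genuinely different route.) The problem is that the step you yourself flag as the heart of the argument is left as a deferral to ``strictness plus the weight formalism, exactly as in Theorem \ref{HodgeThm} and Theorem \ref{VanishingThm},'' and that deferral hides the two actual ideas. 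First, the transfer from $Gr^{0}_{F}$ to $Gr^{i}_{F}$ is not a sheaf-level statement at all: the paper takes $X$ affine, uses Theorem \ref{HodgeThm} to get $Gr^{0}_{F}H^{i-n}_{c}(X,DR(K))=0$, uses Proposition \ref{injProp} to bound the weights of $H^{i-n}_{c}(X,DR(K))$ by $i$, and then uses effectivity plus Hodge symmetry on the pure weight-$i$ quotient ($\dim Gr^{i}_{F}Gr^{W}_{i}=\dim Gr^{0}_{F}Gr^{W}_{i}$) to conclude $Gr^{i}_{F}H^{i-n}_{c}(X,DR(K))=0$. Second, and crucially, this global vanishing must be fed back into a local statement: what one needs is \emph{not} that $\cH^{0}(Gr^{F}_{-i}DR(K)[i-n])$ vanishes as a sheaf (it generally does not, so your long exact sequence of cohomology sheaves cannot ``force'' the required vanishing), but only that its sections supported at closed points vanish, i.e. $\underline{\Gamma}_{x}(\cH^{0}(Gr^{F}_{-i}DR(K)[i-n]))=0$. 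The paper gets this by showing $\cH^{j}(Gr^{F}_{-i}DR(K)[i-n])=0$ for $j<0$ (so the hypercohomology spectral sequences collapse), and then exhibiting an injection
$$\Gamma_{c}\bigl(X,\cH^{0}(Gr^{F}_{-i}DR(K)[i-n])\bigr)\hookrightarrow Gr^{i}_{F}H^{i-n}_{c}(X,DR(K))=0,$$
which is where strictness of the Hodge filtration on compactly supported cohomology is actually used. Without this local-to-global-to-local maneuver your argument does not close.

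Two secondary points. Your appeal to Proposition \ref{ExtProp} (with Corollary \ref{LogCor}) to control the $IC$ term is misplaced: that proposition only applies in the range $i+p\le n-n_{S}-2$, which never reaches the next-to-top degree relevant here. The correct mechanism, which the paper uses, is purity/self-duality of $IC^{H}_{X}$ combined with the fact that $Gr^{F}_{-p}DR$ of a Hodge module on $X$ lives in degrees $[-n,0]$, so that $\cH^{n-1}\bigl(Gr^{F}_{-(n-i)}DR(IC^{H}_{X})[(n-i)-n]\bigr)=0$ once $i\le n-2$; this in turn requires the preliminary reduction to $k\le n-2$ (via Kebekus--Schnell), which your proposal omits. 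Your observation that torsion-freeness of $\cH^{0}(\underline{\Omega}^{i}_{X})$ handles the depth-one part is correct and matches the paper.
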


\begin{proof}
    Since the problem is local, we may assume $X$ is affine. Also, it suffices to consider when $k \leq n -2$ because of a result of Kebekus and Schnell \cite[Thm. 1.10]{ks}.

   If $K:= Cone \bigg(\Q^{H}_{X}[n] \rightarrow IC^{H}_{X} \bigg)$, then we have the exact triangle 
   $$ \bT \Q^{H}_{X}[n] \arrow[r] & IC^{H}_{X} \arrow[r] & K \arrow[r, "+1"] & \hfill. \eT$$
   By applying $Gr^{F}_{-i}DR(\bullet),$ we have the following exact triangle   
$$
\bT \underline{\Omega}^{i}_{X}[n-i] \arrow[r]  & Gr^{F}
_{-i}DR(IC^{H}_{X}) \arrow[r] & Gr^{F}_{-i}DR(K) \arrow[r, "+1"] & \hfill. \eT$$
To help with notation, we have the following definitions
$$I\underline{\Omega}^{i}_{X}:=Gr^{F}
_{-i}DR(IC^{H}_{X})[i-n]$$
$$\Omega^{i}(K):= Gr^{F}_{-i}DR(K)[i-n].$$
Thus, we have the exact triangle
\begin{equation}\label{eq:DRtriangle}
\bT \underline{\Omega}^{i}_{X} \arrow[r]  & I\underline{\Omega}^{i}_{X} \arrow[r] & \Omega^{i}(K) \arrow[r, "+1"] & \hfill. \eT
\end{equation}
Note that $\cH^{0}(I\underline{\Omega}^{i}_{X}) \cong f_{*}\Omega^{i}_{\tX}$, see \cite[\S 8]{ks} for a proof. Both $\cH^{0}(\underline{\Omega}^{i}_{X})$ and $f_{*}\Omega^{i}_{\tX}$ are torsion free (by Lemma \ref{lemma:dBtorsionfree} in the first case, and obviously in the second). If $x \in X$ is a closed point, we have the exact sequence
$$ 0 \rightarrow \underline{\Gamma_{x}}(\cH^{0}(\Omega^{i}(K)))\rightarrow \cH^{1}(\bR \underline{\Gamma_{x}}(\underline{\Omega}^{i}_{X})) \rightarrow \cH^{1}(\bR \underline{\Gamma_{x}}(I\underline{\Omega}^{i}_{X})).$$
First, we claim
$$\underline{\Gamma_{x}}(\cH^{0}(\Omega^{i}(K))) \cong \cH^{1}(\bR \underline{\Gamma_{x}}(\underline{\Omega}^{i}_{X})) \quad \text{ for $i \leq k \leq n-2.$} $$
To show the claim, it suffices to show $\cH^{1}(\bR \underline{\Gamma_{x}}(I\underline{\Omega}^{i}_{X}))= 0$ for $i \leq k.$ By local duality,
$$\cH^{1}(\bR \underline{\Gamma_{x}}(I\underline{\Omega}^{i}_{X}))_{x} \cong Hom(\cH^{-1}( \bR \cH om_{X}(I\underline{\Omega}^{i}_{X}), \omega^{\bullet}_{X}))_{x}, k(x)).$$
Since $IC^{H}_{X}$ is a pure (polarizable) Hodge module of weight $n$, there is a quasi-isomorphism 
$$\bR \cH om_{X}(I\underline{\Omega}^{i}_{X}, \omega^{\bullet}_{X}) \cong I\underline{\Omega}^{n-i}_{X}[n].$$
So, we have an isomorphism
$$\cH^{1}(\bR \underline{\Gamma_{x}}(I\underline{\Omega}^{i}_{X})))_{x} \cong Hom(\cH^{n -1}(I\underline{\Omega}^{n-i}_{X}))_{x} , k(x)).$$
If $n -2 \geq k \geq i$, then $(n-i) + (n-1) >n$ and we must have 
\begin{equation}\label{eq:vanHDRIC}
\cH^{n-1}(I\underline{\Omega}^{n-i}_{X}))= 0. 
\end{equation}
This follows by the definition of the de Rham complex for a Hodge module, see \cite[\S 7]{schnell} for more details. So, we obtain
$$\cH^{1}(\bR \underline{\Gamma_{x}}(I\underline{\Omega}^{i}_{X}))) = 0 \quad \text{for $ i \leq k \leq n-2$.}$$ 
We know by Theorem \ref{HodgeThm} that we have $Gr^{0}_{F}H^{i-n}_{c}(X, DR(K)) = 0$. Also, by Proposition \ref{injProp}, $Gr^{W}_{i+1}H^{i-n}_{c}(X, DR(K)) = 0.$ So, we have 
$$ Gr^{i}_{F}H^{i-n}_{c}(X, DR(K)) =Gr^{i}_{F}Gr^{W}_{i}H^{i-n}_{c}(X, DR(K)) \cong Gr^{0}_{F} Gr^{W}_{i}H^{i-n}_{c}(X, DR(K))  =0.$$
Recall that we have an immersion $i_{X}:X \hookrightarrow \cX$ and, by the strictness of the Hodge filtration, we have
$$Gr^{i}_{F}H^{i-n}_{c}(X, DR(K))= H^{i-n}(\cX, Gr^{F}_{-i}DR(i_{X!}K)).$$
From the inclusion map $j: Y \hookrightarrow \cX$, we have
the maps
$$H^{i-n}_{c}(Y, Gr^{F}_{-i}DR(i_{+}K)) \rightarrow H^{i-n}(\cX, Gr^{F}_{-i}DR(i_{X!}K)) \rightarrow H^{i-n}(Y, Gr^{F}_{-i}DR(i_{+}K)).$$
The complex $Gr^{F}_{-i}DR(i_{+}K) = \Omega^{i}(K)[i-n]$ is well defined on $X$ and the previous diagram can simply be written as
$$H^{0}_{c}(X, \Omega^{i}(K)) \rightarrow H^{i-n}(\cX, Gr^{F}_{-i}DR(i_{X!}K)) \rightarrow H^{0}(X, \Omega^{i}(K)).$$
From \eqref{eq:DRtriangle}, we obtain
$$\cH^{j}(\Omega^{i}(K))= 0 \text{ if } j< 0.$$
Therefore, the spectral sequences, 
$$E_2 ^{ij}= H_?^i(X, \cH^j(Gr_{-i}^F DR(K))) \Rightarrow  H_?^{i+j}(X, Gr_{-i}^F DR(K))$$
collapse to isomorphisms 
$$H^{0}_{c}(X, \Omega^{i}(K)) \cong \Gamma_{c}(X, \cH^{0}(\Omega(K)))$$  
and
$$H^{0}(X, \Omega^{i}(K)) \cong \Gamma(X, \cH^{0}(\Omega^{i}(K))).$$
As a consequence, we have a commutative diagram 
$$ \bT \Gamma_{c}(X, \cH^{0}(\Omega^{i}(K))) \arrow[r] \arrow[hook, dr] & Gr^{i}_{F}H^{i-n}_{c}(X, DR(K)) \arrow[d] \\
& \Gamma(X, \cH^{0}(\Omega^{i}(K))). \eT$$
So the map
$$\Gamma_{c}(X, \cH^{0}(\Omega^{i}(K))) \rightarrow Gr^{i}_{F}H^{i-n}_{c}(X, DR(K))$$
must be injective, and we obtain
$$\Gamma(X, \underline{\Gamma_{x}}(\cH^{0}(\Omega^{i}(K)))) = \Gamma_{c}(X, \underline{\Gamma_{x}}(\cH^{0}(\Omega^{i}(K)))) \subseteq Gr^{i}_{F}H^{i-n}_{c}(X, DR(K))= 0.$$
Since $X$ is affine, we can conclude by the earlier claim that
$$\cH^{1}(\bR \underline{\Gamma_{x}}(\underline{\Omega}^{i}_{X})) \cong \underline{\Gamma_{x}}(\cH^{0}(\Omega^{i}(K))) = 0.$$

\end{proof}

\begin{cor}
     If $\tilde{S}$ is the the smallest closed subset of $X$ such that $X \backslash \tilde{S}$ has rational singularities, then $$\depth(\underline{\Omega}^{i}_{X}) \geq 2 \quad \text{for $i +2 \leq \depth_{\tilde{S}}(\cO_{X}).$}$$
\end{cor}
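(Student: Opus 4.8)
The plan is to deduce this directly by combining Theorem \ref{MuPoConj} with Corollary \ref{rat.rmk}, so no new ideas are needed beyond choosing the correct cutoff. First I would set
$$k := \depth_{\tilde{S}}(\cO_{X}) - 2,$$
since $\tilde{S}$ is by hypothesis the smallest closed subset with $X \setminus \tilde{S}$ having rational singularities. If $k \leq 0$ the asserted range $i + 2 \leq \depth_{\tilde{S}}(\cO_{X})$ is empty (there are no nonnegative $i$ to consider), so the statement holds vacuously and we may assume $k \geq 1$.

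The key step is to apply Corollary \ref{rat.rmk} to this $\tilde{S}$, which gives
$$R^{i}f_{*}\cO_{\tX} = 0 \quad \text{for } 1 \leq i \leq \depth_{\tilde{S}}(\cO_{X}) - 2 = k.$$
With this vanishing in hand, the hypothesis of Theorem \ref{MuPoConj} is satisfied for this value of $k$ (note $X$ is normal by assumption and $f$ is an arbitrary resolution, so the theorem applies verbatim). Invoking that theorem then yields $\depth(\underline{\Omega}^{i}_{X}) \geq 2$ for all $i \leq k$.

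Finally I would translate the index range back into the form stated in the corollary: the inequality $i \leq k = \depth_{\tilde{S}}(\cO_{X}) - 2$ is literally equivalent to $i + 2 \leq \depth_{\tilde{S}}(\cO_{X})$, which completes the proof. I do not anticipate any genuine obstacle here, as this is a formal consequence of the two cited results; the only point requiring any care is the bookkeeping of the index ranges and, as noted, the trivial/vacuous case when $\depth_{\tilde{S}}(\cO_{X}) < 2$, where Corollary \ref{rat.rmk} provides no vanishing but the claimed range is simultaneously empty.
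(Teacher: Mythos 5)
Your overall route is exactly the paper's: combine Corollary \ref{rat.rmk} (to get $R^{i}f_{*}\cO_{\tX}=0$ for $1 \leq i \leq \depth_{\tilde{S}}(\cO_{X})-2$) with Theorem \ref{MuPoConj}. But there is one genuine gap: you justify invoking Theorem \ref{MuPoConj} by saying ``$X$ is normal by assumption,'' and normality is \emph{not} an assumption of this corollary. The corollary is stated for the paper's standing setting of an irreducible variety $X$, while Theorem \ref{MuPoConj} explicitly requires $X$ normal. The paper closes this gap in one line of its proof: in the nonvacuous case the hypothesis forces $\depth_{\tilde{S}}(\cO_{X}) \geq 2$, and this yields normality --- every point of $\tilde{S}$ has local ring of depth at least $2$ (so $\tilde{S}$ has codimension at least $2$ in $X$ and the $S_2$ condition holds along it), while $X \setminus \tilde{S}$ has rational singularities and is therefore normal, hence $R_1$ and $S_2$ there; Serre's criterion then gives that $X$ itself is normal. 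Without some such argument, your application of Theorem \ref{MuPoConj} is unjustified.

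A second, smaller slip is in your vacuous-case bookkeeping. When $k = \depth_{\tilde{S}}(\cO_{X}) - 2 = 0$, the range $i + 2 \leq \depth_{\tilde{S}}(\cO_{X})$ is \emph{not} empty: it contains $i = 0$, so this case cannot be dismissed as vacuous. It is still covered by the same argument, since Theorem \ref{MuPoConj} applied with $k=0$ has a vacuous vanishing hypothesis and yields $\depth(\underline{\Omega}^{0}_{X}) \geq 2$ --- but again only once normality of $X$ has been established, which is precisely the point above. Only when $\depth_{\tilde{S}}(\cO_{X}) \leq 1$ is the asserted range genuinely empty.
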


\begin{proof}
    Since $2 \leq \depth_{\tilde{S}}(\cO_{X})$, we have $X$ to be normal. We may apply the previous theorem with Corollary \ref{rat.rmk}.
\end{proof}

\begin{cor}
    If $\tilde{S}$ is the the smallest closed subset of $X$ such that $X \backslash \tilde{S}$ has rational singularities, and $n_{\tilde{S}} = \dim \tilde{S}$,
    $$\depth(\underline{\Omega}^{i}_{X}) \geq 2 \quad \text{for $i  + n_{\tilde{S}}+2 \leq \depth(\cO_{X}).$}$$
    In particular, Conjecture \ref{DepthConj.} holds when $X$ has rational singularities outside of a finite set.
\end{cor}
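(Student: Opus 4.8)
The plan is to obtain this as a purely formal consequence of the immediately preceding corollary combined with the depth inequality of Lemma \ref{depthlemma}; no new geometric input is required. First I would recall that the previous corollary already gives $\depth(\underline{\Omega}^{i}_{X}) \geq 2$ whenever $i + 2 \leq \depth_{\tilde{S}}(\cO_{X})$, where the subscript $\tilde{S}$ denotes depth taken along the closed set $\tilde{S}$. The task is therefore only to convert the local bound $\depth_{\tilde{S}}$ into the global bound $\depth(\cO_{X})$ appearing in the statement.

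Next I would apply Lemma \ref{depthlemma} to $F = \cO_{X}$ and $\Delta = \tilde{S}$, which yields $\depth_{\tilde{S}}(\cO_{X}) \geq \depth(\cO_{X}) - n_{\tilde{S}}$. Hence, under the hypothesis $i + n_{\tilde{S}} + 2 \leq \depth(\cO_{X})$, one gets $i + 2 \leq \depth(\cO_{X}) - n_{\tilde{S}} \leq \depth_{\tilde{S}}(\cO_{X})$, so the hypothesis of the previous corollary is met and $\depth(\underline{\Omega}^{i}_{X}) \geq 2$ follows at once. Note that, since $i \geq 0$, this same chain forces $\depth_{\tilde{S}}(\cO_{X}) \geq 2$, which is exactly what guarantees that $X$ is normal and hence legitimizes invoking the previous corollary.

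For the final assertion I would specialize to the case where $X$ has rational singularities outside a finite set, so that $\tilde{S}$ is finite and $n_{\tilde{S}} = 0$. Then the condition $i + n_{\tilde{S}} + 2 \leq \depth(\cO_{X})$ reduces to $\depth(\cO_{X}) \geq i + 2$, which is precisely the hypothesis of Conjecture \ref{DepthConj.}; the resulting conclusion $\depth(\underline{\Omega}^{i}_{X}) \geq 2$ is, by the Musta\c{t}\u{a}--Popa lemma quoted above (valid for $\dim X \geq 2$), equivalent to the vanishing $R^{m-2}f_{*}\Omega^{m-i}_{\tilde{Y}}(\log D) = 0$, which is the content of the conjecture. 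The only things to watch are the trivial low-dimensional edge cases ($\dim X \leq 1$, where there is nothing to prove) and the bookkeeping ensuring the inequalities line up correctly; I therefore expect the whole argument to be a short formal deduction rather than to contain any genuinely hard step.
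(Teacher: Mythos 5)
Your proposal is correct and follows exactly the paper's own (very terse) proof, which simply cites the previous corollary together with Lemma \ref{depthlemma}; you have merely spelled out the inequality chain $i+2 \leq \depth(\cO_{X}) - n_{\tilde{S}} \leq \depth_{\tilde{S}}(\cO_{X})$ and the specialization $n_{\tilde{S}} = 0$ that the paper leaves implicit. The bookkeeping, the normality remark, and the translation of the conjecture via the Musta\c{t}\u{a}--Popa lemma all match the paper's intended argument.
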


\begin{proof}
     This follows from the previous corollary and Lemma \ref{depthlemma}.
\end{proof}

\begin{rmk}
    Recently, a proof of Conjecture \ref{MuPoConj}, in the general case, has been worked out by Burke \cite{burke}.
\end{rmk}

\section{Differential forms and reflexive sheaves}

Before stating the main theorem of this section, we give a little background. We will also stay with the same notation as in the previous sections. Given an $n$-dimensional algebraic variety $X$, $S$ will be the singular locus with dimension $n_{S}$. We also let $U$ be the smooth locus with inclusion map $j: U \hookrightarrow X$.

For any variety $X$ and a resolution of singularities $\pi: \tilde{X} \rightarrow X$, we define
    $$\tilde{\Omega}^{i}_{X}:= \pi_{*}\Omega^{i}_{\tilde{X}}.$$
    If $\pi: \tilde{X} \rightarrow X$ is a strong log resolution, with expectional divisor $E$ with simple normal crossing support, we define
    $$\tilde{\Omega}^{i}_{X}(\log):= \pi_{*}\Omega^{i}_{\tilde{X}}(\log E).$$
    These sheaves are independent of the (strong log) resolution of singularities and there is always an injective map
    $$\tilde{\Omega}^{i}_{X} \rightarrow \tilde{\Omega}^{i}_{X}(\log).$$
    Since  Du Bois complexes are functorial, we get a natural map
\begin{equation}\label{eq:dB}
    \underline{\Omega}_X^p\to \R \pi_*\underline{\Omega}_{\tX}^p = \R \pi_*\Omega_{\tX}^p
\end{equation}
    and therefore, a canonical map
    $$\kappa:\cH^0(\underline{\Omega}_X^p)\to \cH^0( \R \pi_{*}\Omega_{\tX}^p)= \tilde \Omega_X^p.$$
    To unwind this a bit, given a simplicial resolution $\pi_{\bullet} :X_\bullet \to X$, we have
     $\underline{\Omega}_X^p= \R \pi_\bullet \Omega_{X_\bullet}^p$. This  is the total complex
    of the double complex representing
    $$ \R \pi_{0*}\Omega_{X_0}^p\to  \R \pi_{1*}\Omega_{X_0}^p\to \ldots$$
    Therefore, we  get a morphism
$$\underline{\Omega}_X^p= \R \pi_\bullet \Omega_{X_\bullet}^p\to \R \pi_{0*}\Omega_{X_0}^p$$
by projection. 
Since  one can build a simplicial resolution $\pi_{\bullet} :X_\bullet \to X$ with
    $X_0\to X$  equal to $\tilde X$ (cf \cite[\S 6.2]{deligneIII}), we obtain the morphism \eqref{eq:dB}.
We also have  a canonical map
$$\Omega_X^p\to \cH^0(\dB^p).$$
The composition of this  map with $\kappa$ is the map
$$\Omega_X^p\to \tilde \Omega_X^p$$
given by pullback of K\"ahler differentials. Therefore, we get the factorization given in the introduction. 

 

   \begin{lemma}
       If $X$ is normal, then for $0 \leq i \leq n$, $\cH^{1}(\bR \underline{\Gamma_{S}}(\underline{\Omega}^{i}_{X}))$ is a coherent.
   \end{lemma}

   \begin{proof}
       The distinguished triangle
       $$\bT \bR \underline{\Gamma_{S}}(\underline{\Omega}^{i}_{X}) \arrow[r] & \underline{\Omega}^{i}_{X} \arrow[r] & \bR j_{*}\Omega^{i}_{U} \arrow[r, "+1"] & \hfill \eT$$
       induces a the long exact sequence
       $$0 \rightarrow \cH^{0}( \underline{\Omega}^{i}_{X}) \rightarrow j_{*}\Omega^{i}_{U} \rightarrow \cH^{1}(\bR \underline{\Gamma_{S}}(\underline{\Omega}^{i}_{X})) \rightarrow \cH^{1}(\underline{\Omega}^{i}_{X}) \rightarrow \cdots$$
       Since $X$ is normal, $j_{*}\Omega^{i}_{U} \cong \cH om_{X}(\cH om_{X}(\Omega^{i}_{X}, \cO_{X}), \cO_{X})$ is coherent. We also know $\cH^{1}(\underline{\Omega}^{i}_{X})$ is coherent. So, $\cH^{1}(\bR \underline{\Gamma_{S}}(\underline{\Omega}^{i}_{X}))$ is a coherent by the long exact sequence.
   \end{proof}

\begin{thm}\label{DiffThm}
     Let $X$ be a normal variety and $f: \tX \rightarrow X$ a resolution of singularities. If $R^{i}f_{*}\cO_{\tX} = 0$ for $1 \leq i \leq k$, then 
     $$\depth_{S}(\underline{\Omega}^{i}_{X}) \geq 2 \quad \text{for $i \leq k.$}$$
    
\end{thm}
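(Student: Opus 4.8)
The plan is to reformulate the depth bound by duality and then reduce, along the lines of the proof of Theorem \ref{MuPoConj}, to a single sheaf vanishing which I will establish by dévissage on $\dim X$. Writing $\cM:=\bR\cH om_X(\underline{\Omega}_X^i,\omega_X^\bullet)$ and combining the definition of depth for a complex with local duality at each point $x\in X$ (with the shift $\dim\overline{\{x\}}$ coming from the normalization of $\omega_X^\bullet$), the inequality $\depth_S(\underline{\Omega}_X^i)\ge 2$ is equivalent to the support estimates $\dim\supp\cH^l(\cM)\le -l-2$ for every $l$. Since $X$ is normal, $S$ has codimension $\ge 2$, so the only nontrivial estimates occur for $-n_S-1\le l\le -1$; the contribution of closed points (that is, the vanishing $\cH^l(\cM)=0$ for $l\ge -1$ as a sheaf) is exactly Theorem \ref{MuPoConj}, and the remaining content is to control the positive-dimensional part of the supports.

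First I would feed the triangle \eqref{eq:DRtriangle}, $\underline{\Omega}_X^i\to I\underline{\Omega}_X^i\to \Omega^i(K)\xrightarrow{+1}$, into $\bR\underline{\Gamma}_x$ at an arbitrary (not necessarily closed) point $x\in S$. As in the proof of Theorem \ref{MuPoConj}, purity of $IC_X^H$ gives $\bR\cH om_X(I\underline{\Omega}_X^i,\omega_X^\bullet)\cong I\underline{\Omega}_X^{n-i}[n]$; combining this with the support bounds for the de Rham complex of the pure Hodge module $IC_X^H$ shows, for $i\le n-2$, that $\cH^{n-c-1}(I\underline{\Omega}_X^{n-i})$ is supported in dimension $<c$, whence $\cH^1(\bR\underline{\Gamma}_x(I\underline{\Omega}_X^i))=0$ at every $x\in S$ with $\dim\overline{\{x\}}=c$ (the complementary range $i>n-2$ being handled by Kebekus--Schnell \cite{ks}). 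Together with the torsion-freeness of $\cH^0(\underline{\Omega}_X^i)$ and of $\cH^0(I\underline{\Omega}_X^i)=f_*\Omega_{\tX}^i$ (which also gives $\cH^0(\bR\underline{\Gamma}_x(\underline{\Omega}_X^i))=0$), the long exact sequence collapses to $\cH^1(\bR\underline{\Gamma}_x(\underline{\Omega}_X^i))\cong \underline{\Gamma}_x(\cH^0(\Omega^i(K))_x)$ for all $x\in S$. Thus the theorem reduces to showing that the coherent sheaf $\cG:=\cH^0(\Omega^i(K))$, which is supported on $S$, vanishes for $i\le k$.

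To prove $\cG=0$ I would induct on $n=\dim X$. When $S$ is finite every point of $S$ is closed, so Theorem \ref{MuPoConj} already gives $\underline{\Gamma}_x(\cG_x)=0$ for all $x\in S$, hence $\cG=0$. In general Theorem \ref{MuPoConj} shows $\cG$ has no zero-dimensional associated points, so if $\cG\ne 0$ it has a component $W$ of dimension $d\ge 1$ with generic point $\eta$. I would then cut by a general hyperplane $H$ for a fixed projective embedding: $X_H:=X\cap H$ is again normal of dimension $n-1$ with $\mathrm{Sing}(X_H)=S\cap H$, and because a general $H$ avoids the finitely many associated primes of $R^{k+1}f_*\cO_{\tX}$, the sequence $0\to\cO_{\tX}(-f^*H)\to\cO_{\tX}\to\cO_{f^{-1}(H)}\to 0$ shows that the induced resolution $f_H:f^{-1}(H)\to X_H$ still satisfies $R^jf_{H*}\cO=0$ for $1\le j\le k$. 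Restricting the cone $K$ and its Hodge-graded de Rham pieces to $X_H$ identifies $\cG^{X_H}:=\cH^0(\Omega^i(K_{X_H}))$ with $\cG|_{X_H}$ near the generic point $\eta'$ of $W\cap H$, which has dimension $d-1$. By the inductive hypothesis applied to $X_H$ we obtain $\cG^{X_H}=0$, contradicting $\eta'\in\supp \cG|_{X_H}$; hence $\cG=0$ and the theorem follows.

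The main obstacle I anticipate is this hyperplane-section step: making precise that the cone $K=\mathrm{Cone}(\Q^H_X[n]\to IC^H_X)$, and more importantly the complex $\Omega^i(K)=Gr^F_{-i}DR(K)[i-n]$, is compatible with restriction to a general hyperplane, so that $\cG^{X_H}\cong\cG|_{X_H}$ generically along $W\cap H$. This requires a Lefschetz-type restriction theorem for $\Q^H_X[n]$ and $IC^H_X$ (the non-characteristic inverse image $i_H^*[-1]$ preserving both for general $H$), together with strict compatibility of $Gr^F DR$ with such restriction; the boundary degrees $i$ near $\dim X-1$, where the purity reduction of the second paragraph degenerates, must be treated separately via Kebekus--Schnell \cite{ks}. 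Checking that a general $H$ is simultaneously transverse to $W$, preserves normality, and avoids the associated primes governing $R^{k+1}f_*\cO_{\tX}$ is routine, but it is precisely what allows the induction to close.
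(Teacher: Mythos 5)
Your overall strategy is structurally parallel to the paper's: both arguments use Theorem \ref{MuPoConj} as the closed-point input and then control the positive-dimensional part of the relevant support by induction on $\dim X$ via general hyperplane sections and Nakayama's lemma. Your reduction of the theorem to the vanishing of the coherent sheaf $\cG=\cH^{0}(\Omega^{i}(K))$ is correct: the extension of the purity/local-duality argument of Theorem \ref{MuPoConj} to non-closed points of $S$ does work (using the standard bound $\dim\supp \cH^{-q}(Gr^{F}_{p}DR(\cM))\leq q$ for de Rham complexes of Hodge modules, together with local duality shifted by $\dim\overline{\{x\}}$), and your verification that a general hyperplane preserves normality and the vanishing $R^{j}f_{*}\cO=0$, $1\leq j\leq k$, is fine.

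The genuine gap is precisely the step you flag yourself: the compatibility of $K=\mathrm{Cone}(\Q^{H}_{X}[n]\to IC^{H}_{X})$, and of $\Omega^{i}(K)=Gr^{F}_{-i}DR(K)[i-n]$, with restriction to a general hyperplane. This is the technical heart of your induction and is neither proven nor reduced to a precise citation. Moreover, even granting a non-characteristic restriction theorem for $IC^{H}_{X}$ with strict $F$-compatibility, what one obtains is not your claimed identification $\cG^{X_{H}}\cong\cG|_{X_{H}}$ (not even generically along $W\cap H$), but a Gysin-type triangle
$$\Omega^{i-1}(K_{X_{H}})\otimes\cO_{X_H}(-H)\to \Omega^{i}(K)\otimes^{L}\cO_{X_{H}}\to \Omega^{i}(K_{X_{H}})\xrightarrow{+1},$$
the analogue of the GNPP triangle; the argument still closes, because the induction hypothesis on $X_{H}$ kills both the degree-$i$ and the degree-$(i-1)$ terms, whence $\cG\otimes\cO_{X_{H}}=0$ and Nakayama applies, but as written your step is incorrect. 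The paper circumvents this difficulty entirely, and this is the main divergence: it never restricts $K$ or $IC^{H}_{X}$. Instead it applies $\bR\underline{\Gamma}_{S}$ to $\underline{\Omega}^{i}_{X}$ and uses only the known triangle $\underline{\Omega}^{i-1}_{H}\otimes\cO_{H}(-H)\to\underline{\Omega}^{i}_{X}\otimes^{L}\cO_{H}\to\underline{\Omega}^{i}_{H}$ from \cite{GNPP} together with Tor-independence, proving the needed vanishing $\cH^{1}(\bR\underline{\Gamma}_{S\cap H}(\underline{\Omega}^{i}_{H}))=0$ on the hyperplane by splitting $S\cap H$ into the singular locus of $H$ (handled by the induction hypothesis) and a subset of the smooth locus of $H$ of codimension at least $2$ (handled by local cohomology of locally free sheaves). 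To complete your proposal you must either supply a proof or reference for the restriction theorem for $IC^{H}_{X}$ at the level of filtered de Rham complexes, or replace that step by the paper's maneuver, which needs no input beyond the Du Bois case.
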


\begin{proof}
    We need to show $\cH^{1}(\bR \underline{\Gamma_{S}}(\underline{\Omega}^{i}_{X})) = 0$ for $i \leq k.$ For any closed point $x \in S$, we  have a spectral sequence
    $$E^{p,q}_{2} = \cH^{p}(\bR \underline{\Gamma_{x}}(\cH^{q}(\bR \underline{\Gamma_{S}}(\underline{\Omega}^{i}_{X})))) \rightarrow \cH^{p +q}(\bR \underline{\Gamma_{x}}\bR \underline{\Gamma_{S}}(\underline{\Omega}^{i}_{X})) \simeq \cH^{p+q}(\bR \underline{\Gamma_{x}}(\underline{\Omega}^{i}_{X})).$$
    By Theorem \ref{MuPoConj}, 
    $$\underline{\Gamma_{x}}(\cH^{1}(\bR \underline{\Gamma_{S}}(\underline{\Omega}^{i}_{X}))) \cong \cH^{1}(\bR \underline{\Gamma_{x}}(\underline{\Omega}^{i}_{X})) = 0.$$  
        So, if the dimension of the support of $\cH^{1}(\bR \underline{\Gamma_{S}}(\underline{\Omega}^{i}_{X}))$ is zero, we must have  $\cH^{1}(\bR \underline{\Gamma_{S}}(\underline{\Omega}^{i}_{X}))=0$.
    
    Since the statements are local, we can assume that $X$ is affine.
    We will use induction on the dimension of $X$ to prove that  the dimension of the support of $\cH^{1}(\bR \underline{\Gamma_{S}}(\underline{\Omega}^{i}_{X}))$ is zero. If $X$ is a surface, then $X$ has isolated singularities, and we may apply Theorem \ref{MuPoConj}. So consider when $X$ is a normal variety of dimension $n \geq 3$. By \cite[Prop. 4.18]{ks}, for a sufficiently general hyperplane $H$, we have
    $$\bR f_{*}\cO_{\tX} \otimes \cO_{H} \simeq \bR g_{*}\cO_{\tH},$$
    where $g: \tH \rightarrow H$ is a resolution of singularities. Furthermore, we have
    $$R^{i}f_{*}\cO_{\tX} \otimes \cO_{H} \cong R^{i}g_{*}\cO_{\tH}.$$
    So, we find that $H$ is normal and satisfies the statement's conditions. Note that we may choose $H$ such that the singular locus of $H$ is given by $S \cap H$, and therefore the codimension of $(S \cap H)$ in $H$ to be greater than or equal to $2$. 
    
    
    Let $U = X \backslash S$ with natural map $j: U \hookrightarrow X$, and $U_{H} = U \cap H = H \backslash (H \cap S)$ with natural map $p: U_{H} \hookrightarrow H.$ From the restriction map, we have the commutative diagram 
        $$\bT
   \bR p_{*}\underline{\Omega}^{i-1}_{U_H} \otimes \cO_{H}(-H) \arrow[r]  & \bR j_{*}\underline{\Omega}^{i}_{U} \otimes^{L} \cO_{H} \arrow[r]  & \bR p_{*}\underline{\Omega}^{i}_{U_H} \arrow[r, "+1"] & \hfill \\
   \underline{\Omega}^{i-1}_{H} \otimes \cO_{H}(-H) \arrow[r] \arrow[u]  & \underline{\Omega}^{i}_{X} \otimes^{L} \cO_{H} \arrow[r]  \arrow[u] & \underline{\Omega}^{i}_{H} \arrow[r, "+1"] \arrow[u] & \hfill.
    \eT$$
    The rows are exact by \cite[exp V, 2.2.1]{GNPP}.
    Hence, we have the exact triangle
    $$ \bT
    \bR \underline{\Gamma_{S \cap H}}(\underline{\Omega}^{i-1}_{H}) \otimes \cO_{H}(-H) \arrow[r]  & \bR \underline{\Gamma_{S }}(\underline{\Omega}^{i}_{X})\otimes^{L} \cO_{H} \arrow[r]  & \bR \underline{\Gamma_{S \cap H}}(\underline{\Omega}^{i}_{H}) \arrow[r, "+1"] \ & \hfill.
    \eT$$
    When we apply cohomology to the previous triangle, we have the exact sequence
    $$ \cdots \rightarrow \cH^{1}(  \bR  \underline{\Gamma_{S \cap H}}(\underline{\Omega}^{i-1}_{H})) \otimes \cO_{H}(-H) \rightarrow \cH^{1}(\bR \underline{\Gamma_{S }}(\underline{\Omega}^{i}_{X})\otimes^{L} \cO_{H}) \rightarrow \cH^{1}(\bR \underline{\Gamma_{S \cap H}}(\underline{\Omega}^{i}_{H})) \rightarrow \cdots. $$
    By the induction hypothesis, $\cH^{1}(  \bR  \underline{\Gamma_{S \cap H}}(\underline{\Omega}^{i-1}_{H})) \otimes \cO_{H}(-H) = \cH^{1}(\bR \underline{\Gamma_{S \cap H}}(\underline{\Omega}^{i}_{H}))= 0$. So, we obtain $ \cH^{1}(\bR \underline{\Gamma_{S }}(\underline{\Omega}^{i}_{X})\otimes^{L} \cO_{H})  = 0$ for $i \leq k$. The exact triangle
    $$\bT \bR \underline{\Gamma_{S}}(\underline{\Omega}^{i}_{X}) \otimes \cO_{X}(-H) \arrow[r] &  \bR \underline{\Gamma_{S }}(\underline{\Omega}^{i}_{X})\arrow[r] &\bR \underline{\Gamma_{S }}(\underline{\Omega}^{i}_{X})\otimes^{L} \cO_{H} \arrow[r,"+1"] & \hfill \eT$$
    induces the long exact sequence
    $$ \cdots \rightarrow \cH^{0}(\bR \underline{\Gamma_{S }}(\underline{\Omega}^{i}_{X})\otimes^{L} \cO_{H}) \rightarrow  \cH^{1}(\bR \underline{\Gamma_{S}}(\underline{\Omega}^{i}_{X})) \otimes \cO_{X}(-H) \rightarrow  \cH^{1}(\bR \underline{\Gamma_{S}}(\underline{\Omega}^{i}_{X})) \rightarrow \cH^{1}(\bR \underline{\Gamma_{S }}(\underline{\Omega}^{i}_{X})\otimes^{L} \cO_{H}) \rightarrow \cdots$$
    Thus, for $i \leq k$, there is a surjective map
    $$ \cH^{1}(\bR \underline{\Gamma_{S}}(\underline{\Omega}^{i}_{X})) \otimes \cO_{X}(-H) \rightarrow  \cH^{1}(\bR \underline{\Gamma_{S}}(\underline{\Omega}^{i}_{X})).$$
      Let $A = \supp{\cH^{1}(\bR \underline{\Gamma_{S }}(\underline{\Omega}^{i}_{X})})$ and suppose that $\dim A \geq 1.$ Then we may choose $H$ such that $A \cap H \neq \emptyset$. If we localize at any closed point $x \in A \cap H$, there is a surjective map
    $$ \cH^{1}(\bR \underline{\Gamma_{S}}(\underline{\Omega}^{i}_{X}))_x \otimes \cO_{X}(-H))_x \rightarrow  \cH^{1}(\bR \underline{\Gamma_{S}}(\underline{\Omega}^{i}_{X}))_x.$$
    Since $\cH^{1}(\bR \underline{\Gamma_{S}}(\underline{\Omega}^{i}_{X}))_{x}$ is a coherent sheaf, we can conclude that $\cH^{1}(\bR \underline{\Gamma_{S}}(\underline{\Omega}^{i}_{X}))_x = 0$ by applying Nakayama's lemma. Contradicting that $x \in A = \supp{\cH^{1}(\bR \underline{\Gamma_{S }}(\underline{\Omega}^{i}_{X})})$. Hence $\dim A =0$. \\

\end{proof}




\begin{cor}\label{cor:Omegareflex} 
     Let $X$ be a normal variety and $f: \tX \rightarrow X$ a resolution of singularities. If $R^{i}f_{*} \cO_{\tX} = 0$ for $1 \leq i \leq k$, then $\cH^{0}(\underline{\Omega}_{X}^{i})$  is reflexive and the natural maps
     $$ \cH^{0}(\underline{\Omega}_{X}^{i}) \rightarrow \tilde \Omega^{i}_{X} \rightarrow \tilde \Omega^{i}_{X}(\log) \rightarrow \Omega^{[i]}_{X}$$
     are isomorphisms for $i \leq k.$ In particular, if $\tilde{S}$ is the the smallest closed subset of $X$ such that $X \backslash \tilde{S}$ has rational singularities, then the morphisms above are isomorphisms for $i +2 \leq \depth_{\tilde{S}}(\cO_{X}).$
\end{cor}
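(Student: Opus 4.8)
The plan is to derive the corollary directly from Theorem \ref{DiffThm}. The starting observation is that all four sheaves in the chain restrict to $\Omega^i_U$ on the smooth locus $U = X \setminus S$ and are torsion-free, so each maps canonically and injectively into $\Omega^{[i]}_X = j_*\Omega^i_U$, with the composite $\cH^0(\underline{\Omega}^i_X) \to \Omega^{[i]}_X$ equal to the canonical inclusion. Consequently it is enough to prove two things: that $\cH^0(\underline{\Omega}^i_X)$ is reflexive, and that this single composite is an isomorphism. Indeed, once the composite is an isomorphism, each intermediate injection is squeezed between isomorphic ends and is therefore itself an isomorphism; and reflexivity of the three middle sheaves then follows from their identification with $\Omega^{[i]}_X$.

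The essential input is the depth bound of Theorem \ref{DiffThm}, which for $i \leq k$ gives $\cH^1(\bR\underline{\Gamma_{S}}(\underline{\Omega}^i_X)) = 0$. I would transfer this from the complex to its zeroth cohomology sheaf using the local-cohomology spectral sequence
\[
E_2^{p,q} = \cH^p(\bR\underline{\Gamma_{S}}(\cH^q(\underline{\Omega}^i_X))) \Rightarrow \cH^{p+q}(\bR\underline{\Gamma_{S}}(\underline{\Omega}^i_X)).
\]
Because $\underline{\Omega}^i_X$ has no cohomology in negative degrees, the corner term $E_2^{1,0} = \cH^1(\bR\underline{\Gamma_{S}}(\cH^0(\underline{\Omega}^i_X)))$ receives and emits no nonzero differential (the relevant sources and targets live in degrees where either local cohomology or $\cH^q(\underline{\Omega}^i_X)$ vanishes), so it survives to $E_\infty^{1,0}$ and injects into $\cH^1(\bR\underline{\Gamma_{S}}(\underline{\Omega}^i_X)) = 0$. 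Hence $\cH^1(\bR\underline{\Gamma_{S}}(\cH^0(\underline{\Omega}^i_X))) = 0$. Together with $\underline{\Gamma_{S}}(\cH^0(\underline{\Omega}^i_X)) = 0$, which holds because $\cH^0(\underline{\Omega}^i_X)$ is torsion-free by Lemma \ref{lemma:dBtorsionfree} and $S \subsetneq X$, I obtain the vanishing of both the zeroth and first local cohomology sheaves of $\cH^0(\underline{\Omega}^i_X)$ along $S$.

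Applying the four-term exact sequence $0 \to \underline{\Gamma_{S}}(\cF) \to \cF \to j_*j^*\cF \to \cH^1(\bR\underline{\Gamma_{S}}(\cF)) \to 0$ to $\cF = \cH^0(\underline{\Omega}^i_X)$, the vanishing of both end terms yields $\cH^0(\underline{\Omega}^i_X) \cong j_*\Omega^i_U = \Omega^{[i]}_X$, where I use that $X$ is smooth along $U$ so that $\cH^0(\underline{\Omega}^i_X)|_U = \Omega^i_U$. Since $X$ is normal, $S$ has codimension $\geq 2$, and $\Omega^i_U$ is locally free, the sheaf $j_*\Omega^i_U = \Omega^{[i]}_X$ is reflexive. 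This gives both the reflexivity of $\cH^0(\underline{\Omega}^i_X)$ and the fact that the composite of the chain is an isomorphism, so by the first paragraph every map in the chain is an isomorphism for $i \leq k$.

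The concluding assertion follows by feeding Corollary \ref{rat.rmk} into the statement just proved: if $\tilde{S}$ is the smallest closed subset with $X \setminus \tilde{S}$ of rational singularities, then $R^i f_*\cO_{\tX} = 0$ for $1 \leq i \leq \depth_{\tilde{S}}(\cO_X) - 2$, so applying the result with $k = \depth_{\tilde{S}}(\cO_X) - 2$ gives the isomorphisms precisely for $i + 2 \leq \depth_{\tilde{S}}(\cO_X)$. I expect the one delicate point to be the second paragraph --- the passage from the depth of the complex $\underline{\Omega}^i_X$ to the corresponding property of the single sheaf $\cH^0(\underline{\Omega}^i_X)$ --- while the rest is formal bookkeeping with torsion-free and reflexive sheaves on a normal variety.
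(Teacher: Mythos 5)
Your proof is correct and takes essentially the same approach as the paper: both deduce from Theorem \ref{DiffThm} that the composite $\cH^{0}(\underline{\Omega}_{X}^{i}) \to \Omega^{[i]}_{X}$ is an isomorphism, identify this with reflexivity using normality, squeeze the intermediate torsion-free sheaves $\tilde{\Omega}^{i}_{X}$ and $\tilde{\Omega}^{i}_{X}(\log)$, and obtain the final statement from Corollary \ref{rat.rmk}. The only difference is a matter of bookkeeping: you first transfer the depth bound to the sheaf $\cH^{0}(\underline{\Omega}^{i}_{X})$ via the local-cohomology spectral sequence and then use the four-term exact sequence, whereas the paper applies the local-cohomology triangle directly to the complex $\underline{\Omega}^{i}_{X}$; both are valid.
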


\begin{proof}
     By Theorem \ref{DiffThm}, we have  $\cH^{1}(\bR \underline{\Gamma_{S}}(\underline{\Omega}^{i}_{X})) = 0$. So, the map  $\cH^{0}(\underline{\Omega}_{X}^{i}) \rightarrow \Omega^{[i]}_{X}:= j_{*}\Omega^{i}_{U}$ is an isomorphism. Since $X$ is normal, this isomorphism is equivalent to saying $\cH^{0}(\Om)$ is reflexive.  Since the sheaves $\tilde{\Omega}^{i}_{X}, \tilde{\Omega}^{i}_{X}(\log)$ are torsion free, and the natural maps $\cH^{0}(\underline{\Omega}_{X}^{i}) \rightarrow \tilde{\Omega}^{i}_{X}\rightarrow \tilde{\Omega}^{i}_{X}(\log)$ are isomorphisms over the smooth locus, they must also be isomorphisms. The last statement follows from Theorem \ref{DiffThm}and  Corollary \ref{rat.rmk}.
\end{proof}

\begin{rmk}
    If $\tilde{S}$ is the the smallest closed subset of $X$ such that $X \backslash \tilde{S}$ has rational singularities with dimension $n_{\tilde{S}},$ then by the previous corollary and Remark \ref{rat.rmk2}, we have the natural maps
     $$ \cH^{0}(\underline{\Omega}_{X}^{i}) \rightarrow \tilde \Omega^{i}_{X} \rightarrow \tilde \Omega^{i}_{X}(\log ) \rightarrow \Omega^{[i]}_{X}$$
     to be isomorphisms whenever $i \leq \depth(\cO_{X}) -n_{\tilde{S}} -2$. Therefore, the sheaves $ \Omega^{i}_{X}$ and $\tilde \Omega^{i}_{X}(\log)$ are reflexive for $i \leq \depth(\cO_{X}) -n_{\tilde{S}} -2$. This agrees with the results of Park \cite{park}, which give a better upper bound for $i$. A similar result was also given by Tighe \cite{tighe}.
\end{rmk}

    A recent topic of interest has been over higher singularities. When $X$ is a local complete intersection, the variety is said to have $k$-Du Bois singularities if the canonical morphism
$$\Omega^{p}_{X} \rightarrow \underline{\Omega}^{p}_{X}$$
is a quasi-isomorphism for $0 \leq p \leq k$. The variety $X$ is said to have $k$-rational singularities if the canonical morphism
$$\Omega^{p}_{X}\rightarrow \D(\underline{\Omega}^{n-p}_{X})[-\dim X]$$
is a quasi-isomorphism for $0 \leq p \leq k$, where $\D = \bR \cH om_{X}(\bullet, \omega_{X}^{\bullet}).$ See
the papers of  Musta\c{t}\u{a} et. al.  \cite{MuOlPoWi} and Jung et. al. \cite{JuKiSaYo} and Friedman-Laza \cite{FrLa} for more details.

For general quasi-projective varieties, we have the following definitions given by Shen, Vekatesh, and Vo \cite{shenVenVo}.

\begin{defn}
$X$ has $k$-Du Bois singularities if it is seminormal and
\begin{enumerate}
    \item codim$_{X}(S) \geq 2k+1$, where $S$ is the singular locus;
    \item $X$ has pre-$k$-Du Bois singularities. That is,
    $$\cH^{i}(\underline{\Omega}^{p}_{X}) = 0 \quad \text{for $i>0$ and $0 \leq p \leq k.$}$$
    \item $\cH^{0}(\underline{\Omega}^{p}_{X})$ is reflexive, for all $p \leq k.$
\end{enumerate}
\end{defn}

\begin{defn}
    $X$ is said to have $k$-rational singularities if it is normal and 
    \begin{enumerate}
    \item codim$_{X}(S) \geq 2k+2$, where $S$ is the singular locus;
    \item $X$ has pre-$k$-rational singularities. That is,
    $$\cH^{i}(\D(\underline{\Omega}^{n-p}_{X})[-\dim X])= 0 \quad \text{for $i>0$ and $0 \leq p \leq k.$}$$
\end{enumerate}
\end{defn}
Note that when $k = 0$, we receive the commonly known definitions for Du Bois and rational singularities, respectively. It was shown in \cite{shenVenVo} that all the respective definitions agree when $X$ is a local complete intersection. Furthermore, for normal varieties, the following implications were shown
$$\bT \text{$k$-rational singularities} \arrow[r, Rightarrow] \arrow[d, Rightarrow] & \text{$k$-Du Bois singularities} \arrow[d, Rightarrow] \\
\text{pre-$k$-rational singularities} \arrow[r, Rightarrow] & \text{pre-$k$-Du Bois singularities.} \eT$$
We end this section by noting that the condition of:
\begin{center}
    $\cH^{0}(\underline{\Omega}^{p}_{X})$ is reflexive, for all $p \leq k$
\end{center} 
for $k$-Du Bois singularities is satisfied if $X$ is normal and $R^{i}f_{*}\cO_{\tX} = 0$ for $1 \leq i \leq k$, where $f:\tX \rightarrow X$ is a resolution of singularities. This follows from the corollary \ref{cor:Omegareflex}.

\section{Criterion for isolated rational singularities}
In this section, we are interested in when the morphism
$$\cH^{0}(\Omp) \rightarrow \tilde{\Omega}^{p}_{X}$$
is an isomorphism. From the previous section, this morphism is an isomorphism for all $p$ when $X$ has rational singularities. We will show that the converse is false, but we can give a criterion for isolated singularities. For example, normal hypersurfaces of $\C^{n+1}$ whose defining equation are weighted-homogeneous
$$\displaystyle \sum_{i}x_{i}^{a_{i}} = 0 \quad \text{such that $\displaystyle \sum \dfrac{1}{a_{i}} \leq 1$}$$
never have rational singularities by \cite{Saito-Rat}. If we let $X$ denote such a hypersurface and $f: \tX \rightarrow X$ a resolution of singularities, then 
$$R^{i}f_{*}\cO_{\tX} = 0 \quad \text{for $1 \leq i \leq \dim X -2= n-2$}$$
$$R^{n-1}f_{*}\cO_{\tX} \neq 0.$$
Therefore, the natural map
$$\cH^{0}(\Omp) \rightarrow \tilde{\Omega}^{p}_{X}$$
is an isomorphism for $0 \leq p \leq n-2.$ In general, the natural map $\cH^{0}(\underline{\Omega}^{n-1}_{X}) \rightarrow \tilde{\Omega}^{n-1}_{X}$ is not an isomorphism. An example is when $X = \{z^3 + x^3 + y^3 = 0\}.$ However, there are cases when the map $\cH^{0}(\underline{\Omega}^{n-1}_{X}) \rightarrow \tilde{\Omega}^{n-1}_{X}$ is an isomorphism. An example is when $X = \{z^2 +x^3 +y^7 = 0\} \subset \C^{3}.$ These examples will be discussed in further detail.

To simplify notation, we give the following definition.

\begin{defn}
 We will say that $X$ has  { quasi-rational} singularities if it is normal and 
 $\cH^{0}(\underline{\Omega}_{X}^{p}) \rightarrow \tilde \Omega^{p}_{X}$  an isomorphism for all $p$. 
 
\end{defn}

 \begin{lemma}\label{lemma:quasirat}
  If $X$ is normal of dimension $n$, then
  $$\cH^{0}(\Om) \cong \tilde{\Omega}^{0}_{X}$$
   $$\cH^{0}(\underline{\Omega}^{n}_{X}) \cong \tilde{\Omega}^{n}_{X}$$
\end{lemma}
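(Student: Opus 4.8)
The plan is to treat the two isomorphisms separately: the degree-$0$ case is immediate from normality, while the degree-$n$ case rests on the vanishing of top-degree Du Bois forms on varieties of dimension $<n$.

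For the degree-$0$ statement, recall from Lemma \ref{lemma:dBtorsionfree} that normality gives $\cO_X\cong \cH^0(\Om)$. Since $X$ is normal and $\pi:\tX\to X$ is proper and birational, we also have $\pi_*\cO_{\tX}=\cO_X$, i.e. $\tilde\Omega_X^0=\cO_X$. Both identifications agree with the tautological maps on the smooth locus, so the canonical map $\kappa:\cH^0(\Om)\to\tilde\Omega_X^0$ is the identity of $\cO_X$, hence an isomorphism.

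For the degree-$n$ statement I would first record the vanishing lemma: if $W$ is any variety with $\dim W<n$, then $\underline{\Omega}_W^n=0$. Indeed, a (semi-)simplicial resolution $W_\bullet\to W$ has all components $W_\alpha$ smooth of dimension $\le\dim W<n$, so $\Omega_{W_\alpha}^n=0$, and property (3) gives $\underline{\Omega}_W^n=\R(W_\bullet)\,\Omega_{W_\bullet}^n=0$. Next, using iterated two-term resolutions (cf. \cite{GNPP}), I would choose a simplicial resolution $\pi_\bullet:X_\bullet\to X$ whose only top-dimensional component is $X_0=\tX$, with every other component $X_\alpha$ satisfying $\dim X_\alpha<n$; this is possible because the remaining components live over the singular locus $S$ (of dimension $<n$) and over the exceptional set $\tilde{E}=\pi^{-1}(S)_{red}\subsetneq\tX$ (of dimension $\le n-1$). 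With $p=n$, the sheaves $\Omega_{X_\alpha}^n$ then vanish for $\alpha\ne 0$, so the complex collapses to its $X_0$-term:
$$\underline{\Omega}_X^n=\R\pi_\bullet\,\Omega_{X_\bullet}^n\simeq \R\pi_{0*}\Omega_{\tX}^n=\R\pi_*\Omega_{\tX}^n.$$
Taking $\cH^0$ yields $\cH^0(\underline{\Omega}_X^n)\cong\pi_*\Omega_{\tX}^n=\tilde\Omega_X^n$, and since $\kappa$ is by construction the map induced by the projection of $\R\pi_\bullet\,\Omega_{X_\bullet}^n$ onto its $X_0=\tX$ term, this isomorphism is precisely $\kappa$.

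The main obstacle is organizational rather than deep: one must produce a resolution that simultaneously satisfies $X_0=\tX$ and the dimension bound $\dim X_\alpha<n$ for $\alpha\ne 0$, and then verify that the collapse isomorphism is the \emph{canonical} map $\kappa$ and not merely some abstract identification. Alternatively, I would bypass the explicit hyperresolution by invoking the Mayer--Vietoris descent triangle for the blow-up square of $\pi$ along $S$,
$$\underline{\Omega}_X^n\to \R\pi_*\Omega_{\tX}^n\oplus\underline{\Omega}_S^n\to \R\pi_*\underline{\Omega}_{\tilde{E}}^n\xrightarrow{+1},$$
in which the last two terms vanish by the dimension bound together with the vanishing lemma, giving the same conclusion $\cH^0(\underline{\Omega}_X^n)\cong\tilde\Omega_X^n$.
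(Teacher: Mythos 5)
Your proposal is correct and takes essentially the same route as the paper: both arguments rest on the GNPP hyperresolution $\pi_\bullet\colon X_\bullet\to X$ with $X_0\to X$ a resolution and $\dim X_i\le n-i$, so that $\Omega^n_{X_i}=0$ for $i\ge 1$ and $\underline{\Omega}_X^n$ collapses to $\bR\pi_{0*}\Omega^n_{X_0}$. The only divergence is minor: the paper additionally invokes Grauert--Riemenschneider to conclude that the whole complex is concentrated in degree zero, whereas you simply take $\cH^0$, which suffices for the statement as written; your Mayer--Vietoris alternative is likewise sound.
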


\begin{proof}
  The degree $0$ stated was proved in Lemma \ref{lemma:dBtorsionfree}.
  
  By \cite{GNPP}, there exists a simplicial resolution $\pi_\bullet: X_\bullet \to X$ with  $\dim X_i \le n-i$,
  and $X_0\to X$ a resolution.
  It follows from this and Grauert-Riemenschneider that
  $$\underline{\Omega}_X^n = \R \pi_{\bullet *}\Omega_{X_\bullet}^n =  \R \pi_{\bullet 0}\Omega_{X_\bullet}^n = \tilde \Omega_X^n$$

\end{proof}

\begin{thm}\label{thm:QR}
  Let $X$ be a normal variety with isolated singularities. Fix a resolution of singularities $\pi:\tilde X\to X$
  such that the exceptional divisor $E$ has simple normal crossings with components $E_i$.
 The morphism $\cH^{0}(\Omp) \rightarrow \tilde{\Omega}^{p}_{X}$ is an isomorphism if and only if the restriction maps $H^0(\tX, \Omega_{\tX}^p)\to H^0(E_i,\Omega_{E_i}^p)$
  are all zero.
\end{thm}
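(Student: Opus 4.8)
The plan is to compare the two sheaves through the exceptional divisor by means of the abstract blow-up triangle for the Du Bois complex, and then to compute the resulting obstruction group on $E$ using its normal crossing structure. I will assume $p\geq 1$; the degree $p=0$ is exactly Lemma \ref{lemma:quasirat} (there the map is always an isomorphism, and the restriction criterion is not the correct one in that degree because $\underline{\Omega}^0_{\{x\}}=\C$ is nonzero).

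The first step is to write down the distinguished triangle attached to the square $E\hookrightarrow\tX$, $\{x\}\hookrightarrow X$ (cohomological descent, cf. \cite{GNPP}). As $\tX$ is smooth and a reduced point carries no positive-degree forms, for $p\geq 1$ this reads
$$\underline{\Omega}_X^p\to \bR\pi_*\Omega_{\tX}^p\to \bR\pi_*\underline{\Omega}_E^p\xrightarrow{+1}.$$
Since $\underline{\Omega}_E^p$ lives in non-negative degrees, so does $\bR\pi_*\underline{\Omega}_E^p$, and the long exact sequence of cohomology sheaves shows both that $\cH^0(\underline{\Omega}_X^p)\to\tilde\Omega_X^p$ is injective and that its cokernel is the image of the connecting map $\tilde\Omega_X^p\to\cH^0(\bR\pi_*\underline{\Omega}_E^p)$. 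Hence $\cH^0(\underline{\Omega}_X^p)\to\tilde\Omega_X^p$ is an isomorphism if and only if this map vanishes.

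Because the isomorphism question is local on $X$ and the singularities are isolated, I would next localize at a singular point $x$. Then $\cH^0(\bR\pi_*\underline{\Omega}_E^p)$ is the skyscraper at $x$ with stalk $\HH^0(E,\underline{\Omega}_E^p)$, while the stalk of $\tilde\Omega_X^p$ is $H^0(\tX,\Omega_{\tX}^p)$ (sections over the neighbourhood $\tX=\pi^{-1}(X)$). The hypercohomology spectral sequence for $E$ degenerates in the relevant corner, giving $\HH^0(E,\underline{\Omega}_E^p)=H^0(E,\cH^0(\underline{\Omega}_E^p))$. Now the explicit Du Bois complex of the normal crossing variety $E$ identifies $\cH^0(\underline{\Omega}_E^p)$ with the subsheaf of $\bigoplus_i\nu_{i*}\Omega^p_{E_i}$ of tuples agreeing on the double intersections, so taking global sections yields an injection
$$\HH^0(E,\underline{\Omega}_E^p)\hookrightarrow\bigoplus_iH^0(E_i,\Omega^p_{E_i}),$$
under which the connecting map becomes $\omega\mapsto(\omega|_{E_i})_i$, the pullback of forms to the components. (Equivalently $\HH^0(E,\underline{\Omega}_E^p)=Gr^p_FH^p(E,\C)$ injects into $\bigoplus_iGr^p_FH^p(E_i,\C)=\bigoplus_iH^0(E_i,\Omega^p_{E_i})$, the higher strata contributing only terms $Gr^p_F$ of $H^{<p}$ of smooth proper varieties, which vanish.)

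Combining these, the connecting map is zero if and only if its composite with the above injection is zero, and that composite is precisely the tuple of restriction maps $H^0(\tX,\Omega_{\tX}^p)\to H^0(E_i,\Omega^p_{E_i})$; a map into a direct sum vanishes if and only if each component does. Therefore $\cH^0(\underline{\Omega}_X^p)\to\tilde\Omega_X^p$ is an isomorphism exactly when all these restrictions vanish, which is the assertion. The step I expect to be the main obstacle is the identification in the previous paragraph: setting up the blow-up triangle in the correct degrees (the point contributing only in degree $0$, which is what breaks the naive statement when $p=0$) and, above all, the clean description of $\HH^0(E,\underline{\Omega}_E^p)$ as holomorphic $p$-forms on the components together with the verification that the connecting map is genuinely restriction of forms. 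Once these inputs about the normal crossing divisor $E$ are secured, the equivalence follows formally from the long exact sequence and the injectivity into $\bigoplus_iH^0(E_i,\Omega^p_{E_i})$.
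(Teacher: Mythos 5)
Your proof is correct, but it takes a genuinely different route from the paper's. The paper argues via Huber's $h$-descent description of $\cH^{0}(\Omp)$: sections over a small $U$ containing one singular point are forms $\alpha$ on $U_{0}=\pi^{-1}(U)$ satisfying $\pi_{1}^{*}\alpha=\pi_{2}^{*}\alpha$ on a resolution of $(U_{0}\times_{U}U_{0})_{red}$; since that fiber product has, besides the diagonal component birational to $U_{0}$, components birational to $E_{i}\times E_{j}$, the descent condition is precisely the vanishing of the restrictions $\alpha|_{E_{i}}$, and both implications fall out at once. You instead use the abstract blow-up triangle $\Omp\to\bR\pi_{*}\Omega^{p}_{\tX}\to\bR\pi_{*}\underline{\Omega}^{p}_{E}\xrightarrow{+1}$ (valid for $p\geq 1$ because the center is a reduced finite set, so $\underline{\Omega}^{p}_{Z}=0$), reduce the question to the vanishing of the induced map $\tilde{\Omega}^{p}_{X}\to\pi_{*}\cH^{0}(\underline{\Omega}^{p}_{E})$, and then compute $\HH^{0}(E,\underline{\Omega}^{p}_{E})$ from the simple normal crossing structure of $E$ as the compatible tuples inside $\bigoplus_{i}H^{0}(E_{i},\Omega^{p}_{E_{i}})$, with the map being restriction of forms by functoriality. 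Each approach buys something: yours identifies the cokernel of $\cH^{0}(\Omp)\to\tilde{\Omega}^{p}_{X}$ as an explicit subspace of $\bigoplus_{i}H^{0}(E_{i},\Omega^{p}_{E_{i}})$ (indeed of $Gr^{p}_{F}H^{p}(E,\C)$), so it measures the failure rather than merely detecting it, and it cleanly isolates the $p=0$ anomaly that the paper's statement glosses over (for $p=0$ the map is always an isomorphism while constants restrict nontrivially); the paper's argument is more elementary, needing neither the blow-up triangle nor the Du Bois complex of an SNC variety, and it exhibits concretely which descent datum fails. One caution, which applies equally to the paper's own proof: your assertion that the stalk of $\tilde{\Omega}^{p}_{X}$ at $x$ is $H^{0}(\tX,\Omega^{p}_{\tX})$ conflates sections over $\pi^{-1}(U)$ for arbitrarily small $U\ni x$ with global sections over $\tX$; these have the same image in $\bigoplus_{i}H^{0}(E_{i},\Omega^{p}_{E_{i}})$ when $X$ is affine (coherence of $\pi_{*}\Omega^{p}_{\tX}$ together with the fact that the target is killed by $\mathfrak{m}_{x}$), but this identification is not automatic for non-affine $X$, and the paper makes exactly the same silent reduction when it applies the hypothesis on $H^{0}(\tX,\Omega^{p}_{\tX})$ to forms $\alpha\in\Omega^{p}(U_{0})$; so your argument is faithful to the theorem as the paper proves it.
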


\begin{proof}
 Choose a Zariski open $U\subset X$ containing a unique singular point.
  Let $U_0 = \pi^{-1}U$ and let $U_1\to U_{0}\times_U U_0$
  be another resolution of singularities, with projection $\pi_i: U_1\to U_0$. Then
  $$\cH^0(\underline{\Omega}_X^p)(U) = \{\alpha\in \Omega^p(U_0)\mid \pi_1^* \alpha= \pi_2^* \alpha\}$$
  by \cite{huber}.
   Then $U_1$ has several components,  with one, say $U_{1,00}$ birational to $U$ and the
   rest  $U_{1,ij}$ birational to  $E_i\times E_j$. Suppose that  $H^0(\tX, \Omega_{\tX}^p)\to H^0(E_i,\Omega_{E_i}^p)$ are zero.
   If $\alpha\in \Omega^p(U_0)$, then
  $\pi_1^* \alpha= \pi_2^* \alpha$ holds on  $U_{1,00}$ since it birational to $U_0$. On other components, the equation
  $\pi_1^* \alpha= \pi_2^* \alpha$ is trivially true, because these forms vanish.
  Therefore
  $$\cH^0(\underline{\Omega}_X^p)(U) = \Omega^p_{U_0}(U_0)$$
Since this holds for all $U$, the morphism $\cH^{0}(\Omp) \rightarrow \tilde{\Omega}^{p}_{X}$ is an isomorphism.

  Suppose that
  $H^0(\tX, \Omega_{\tX}^p)\to H^0(E_i,\Omega_{E_i}^p)$ is nonzero for some $i$. Let $\alpha$ be a nonzero $p$-form in the image.
  Then the condition $\pi_1^* \alpha= \pi_2^* \alpha$ will fail on  $U_{1,ii}$.
  Therefore
  $$\cH^0(\underline{\Omega}_X^p)(U) \subsetneq \Omega^p_{U_0}(U_0)$$
  
\end{proof}

\begin{cor}
  A normal projective surface $X$  has quasi-rational singularities if and only if all exceptional
  curves in a resolution $\tX$ have trivial image in the Albanese $Alb(\tX)$.
  In particular, $X$ has quasi-rational singularities if all irreducible exceptional curves are rational.
\end{cor}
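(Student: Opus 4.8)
The plan is to reduce the statement to the single case $p=1$ and then reinterpret the vanishing criterion of Theorem~\ref{thm:QR} through the Albanese map. Since $X$ is a normal surface, its singular locus has codimension $\geq 2$ and hence consists of isolated points, so Theorem~\ref{thm:QR} is available for every $p$. I would first observe that quasi-rationality only imposes a condition in degree $p=1$: for $p=0$ the map $\cH^0(\Om)\to\tilde\Omega_X^0$ is always an isomorphism by Lemma~\ref{lemma:quasirat}, and for $p=2=\dim X$ the same lemma gives $\cH^0(\underline{\Omega}_X^2)\cong\tilde\Omega_X^2$ (consistently, the target curves $E_i$ have $\Omega_{E_i}^2=0$, so the criterion of Theorem~\ref{thm:QR} is vacuous there). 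Thus $X$ is quasi-rational if and only if $\cH^0(\underline{\Omega}_X^1)\to\tilde\Omega_X^1$ is an isomorphism, which by Theorem~\ref{thm:QR} holds exactly when every restriction map $H^0(\tX,\Omega_{\tX}^1)\to H^0(E_i,\Omega_{E_i}^1)$ is zero.

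Next I would translate this vanishing into the language of the Albanese. Let $a\colon\tX\to A:=\mathrm{Alb}(\tX)$ be the Albanese map; pullback gives an isomorphism $a^*\colon H^0(A,\Omega_A^1)\xrightarrow{\sim}H^0(\tX,\Omega_{\tX}^1)$, and for each component $E_i$ one has $(a^*\eta)\vert_{E_i}=(a\vert_{E_i})^*\eta$. Hence the restriction map vanishes for all $\eta$ if and only if $(a\vert_{E_i})^*=0$ on $H^0(A,\Omega_A^1)$. The key claim is that, for the smooth projective curve $E_i$, this pullback is zero if and only if $a\vert_{E_i}\colon E_i\to A$ is constant, i.e. $E_i$ has trivial image in $A$. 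One direction is immediate. For the converse I would choose a base point, factor $a\vert_{E_i}$ through the Jacobian as $a\vert_{E_i}=\psi\circ j$ with $j\colon E_i\to J(E_i)$ the Abel--Jacobi map and $\psi\colon J(E_i)\to A$ a homomorphism of abelian varieties; since $j^*$ is an isomorphism on global $1$-forms, the hypothesis forces $\psi^*=0$, hence $d\psi=0$ at the origin, and a homomorphism of abelian varieties with vanishing differential at the identity is constant. Combining this with the previous paragraph yields the stated equivalence, and since $\mathrm{Alb}(\tX)$ is a birational invariant of smooth projective varieties the condition does not depend on the chosen resolution.

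Finally, the ``in particular'' assertion is immediate: if every irreducible exceptional curve $E_i$ is rational, then any morphism $E_i\cong\mathbb{P}^1\to A$ to an abelian variety is constant (equivalently $H^0(\mathbb{P}^1,\Omega^1)=0$ kills all restriction maps), so all images are trivial and $X$ is quasi-rational. I expect the main obstacle to be the key claim in the second paragraph --- that the vanishing of the pullback of every global $1$-form forces the map to the abelian variety to be constant. This rests on the universal property of the Albanese and on the fact that a homomorphism of abelian varieties is determined by its differential at the identity; everything else is a formal consequence of Theorem~\ref{thm:QR} and Lemma~\ref{lemma:quasirat}.
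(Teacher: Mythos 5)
Your proposal is correct and follows essentially the same route as the paper: reduce to the case $p=1$ via Lemma \ref{lemma:quasirat}, apply Theorem \ref{thm:QR} to translate quasi-rationality into vanishing of the restriction maps $H^0(\tX,\Omega^1_{\tX})\to H^0(E_i,\Omega^1_{E_i})$, and identify this vanishing with triviality of the image of $E_i$ in $\mathrm{Alb}(\tX)$. The only difference is one of detail: the paper states the Albanese equivalence as a bare observation, while you justify it carefully (via $a^*$ being an isomorphism on global $1$-forms and the factorization through the Jacobian), which is a welcome but not conceptually different elaboration.
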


\begin{proof}
 By  Lemma \ref{lemma:quasirat},  it is enough to check that the map $\cH^{0}(\underline{\Omega}^{1}_{X}) \rightarrow \tilde{\Omega}^{1}_{X}$ is an isomorphism. Now observe that 
 $H^0(\tX, \Omega_{\tX}^1)\to H^0(E_i,\Omega_{E_i}^1)$ is zero if and only if $E_i$ maps trivially to  $Alb(\tX)$.
\end{proof}

\begin{ex}\label{ex:cusp}
Recall that a normal surface singularity is a cusp if the exceptional divisor  $E$ in the minimal resolution is a cycle of smooth rational curves.
Such a singularity is quasi-rational but not rational because $H^1(E, \cO_E)\not=0$.
\end{ex}

\begin{prop}
  Let $X$ be a quasiprojective variety such that
  $\cH^{0}(\Omp) \rightarrow \tilde{\Omega}^{p}_{X}$ is an isomorphism.
  For a general hyperplane section $H$, the morphism  $\cH^{0}(\underline{\Omega}^{p-1}_{H}) \rightarrow \tilde{\Omega}^{p-1}_{H}$ is an isomorphism. In particular, if $X$ has quasi-rational singularities, so does $H$.
\end{prop}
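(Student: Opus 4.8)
The plan is to compare the hyperplane-section behaviour of $\cH^0(\underline{\Omega}^\bullet_H)$ and of $\tilde\Omega^\bullet_H$ through a single morphism of distinguished triangles, running an induction on $p$. First I would fix a sufficiently general $H$. By Bertini $H$ is again normal, and for general $H$ the preimage $\tH:=f^{-1}(H)$ is smooth, so $g:=f|_{\tH}:\tH\to H$ is a resolution of singularities and $\tH=f^{*}H$ as divisors. Generality also provides Tor-independence of $H$ with the relevant coherent sheaves (so that ordinary restriction agrees with derived restriction in degree $0$) and the cohomology-and-base-change isomorphism $\tilde\Omega^p_X|_H\cong g_*(\Omega^p_{\tX}|_{\tH})$, exactly as such identifications are invoked in the proof of Theorem \ref{DiffThm}.

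The two triangles I would use are, on the Du Bois side, the hyperplane-section triangle of \cite{GNPP} (the bottom row appearing in the proof of Theorem \ref{DiffThm})
$$\underline{\Omega}^{p-1}_H(-H)\to \underline{\Omega}^p_X\otimes^{L}\cO_H\to \underline{\Omega}^p_H\xrightarrow{+1},$$
and on the resolution side the image under $\bR g_*$ of the conormal sequence $0\to\Omega^{p-1}_{\tH}(-\tH)\to\Omega^p_{\tX}|_{\tH}\to\Omega^p_{\tH}\to 0$, namely
$$\bR g_*\Omega^{p-1}_{\tH}(-\tH)\to \bR g_*(\Omega^p_{\tX}|_{\tH})\to \bR g_*\Omega^p_{\tH}\xrightarrow{+1}.$$
The comparison maps $\underline{\Omega}^q_?\to \bR(\cdot)_*\Omega^q$ of \eqref{eq:dB}, together with the projection-formula identification $\bR g_*\Omega^{p-1}_{\tH}(-\tH)\cong \bR g_*\Omega^{p-1}_{\tH}\otimes\cO_H(-H)$, furnish a morphism from the first triangle to the second. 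Checking commutativity of the squares is functoriality of the Du Bois (simplicial-resolution) construction with respect to the augmentation to $\tX$; this is the first technical point I would verify carefully.

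Taking $\cH^0$ yields a ladder of long exact sequences. For general $H$ the term $\cH^0(\underline{\Omega}^p_X\otimes^{L}\cO_H)$ is $\cH^0(\underline{\Omega}^p_X)|_H$ (the obstructing $\mathrm{Tor}_1$ vanishes on the torsion-free sheaf $\cH^0(\underline{\Omega}^p_X)$ of Lemma \ref{lemma:dBtorsionfree}), and the vertical map there is $\kappa_p|_H$, an isomorphism by hypothesis. The leftmost vertical map is $\kappa_{p-1}$ for $H$ twisted by $\cO_H(-H)$, an isomorphism by the inductive hypothesis in degree $p-1$; the base case $p=0$ is $\cH^0(\underline{\Omega}^0_H)=\cO_H=\tilde\Omega^0_H$ from Lemma \ref{lemma:quasirat}. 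Since $\kappa'_p:\cH^0(\underline{\Omega}^p_H)\to\tilde\Omega^p_H$ is always injective (it is one of the inclusions of the introduction), the only content is its surjectivity.

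The hard part is that the five-lemma does not close on the nose: controlling $\mathrm{coker}(\kappa'_p)$ forces a comparison of the connecting terms $\cH^1(\underline{\Omega}^{p-1}_H(-H))$ and $R^1g_*\Omega^{p-1}_{\tH}(-\tH)$, and nothing in the degree-zero hypotheses controls these $\cH^1$'s. I would circumvent this precisely as at the end of the proof of Theorem \ref{DiffThm}: $\mathrm{coker}(\kappa'_p)$ is coherent and supported on the singular locus of $H$, and it suffices to show that support is empty. Restricting the comparison triangle to a further general hyperplane produces a surjection $\mathrm{coker}(\kappa'_p)\otimes\cO(-H)\twoheadrightarrow\mathrm{coker}(\kappa'_p)$, so a Nakayama argument at a generic point of a top-dimensional component of the support forces that support to be zero-dimensional. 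At such an isolated transverse point the singularity is isolated, and Theorem \ref{thm:QR} applies: its criterion—vanishing of the restriction of global forms to the exceptional components—is inherited from $X$ under general slicing, since the exceptional divisors of $\tH$ are the hyperplane slices of those of $\tX$ and restriction of forms commutes with slicing. This forces $\kappa'_p$ to be an isomorphism there, contradicting nonemptiness of the support, so $\cH^0(\underline{\Omega}^p_H)\cong\tilde\Omega^p_H$. Ranging over all $p$ gives the "in particular" that $H$ is again quasi-rational, and I expect the compatibility of obstruction classes (triangle functoriality) and this inheritance of the isolated criterion to be the two steps requiring the most care.
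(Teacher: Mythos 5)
Your proposal aims at the wrong vertical map, and this single misreading is what generates all of the difficulty you then struggle with. The proposition's conclusion is in degree $p-1$: the target $\tilde{\Omega}^{p}_{H}$ in the statement is a typo for $\tilde{\Omega}^{p-1}_{H}$ (the degree bookkeeping and the ``in particular'' clause confirm this), so the hypothesis in degree $p$ on $X$ is supposed to yield the conclusion in degree $p-1$ on $H$. In the ladder built from the triangle $\underline{\Omega}^{p-1}_{H}\otimes L^{-1} \to \underline{\Omega}^{p}_{X}\vert_{H} \to \underline{\Omega}^{p}_{H} \xrightarrow{+1}$ of \cite{GNPP} and the pushforward of the conormal sequence $0 \to \Omega^{p-1}_{\tH}\otimes f^{*}L^{-1} \to \Omega^{p}_{\tX}\vert_{\tH} \to \Omega^{p}_{\tH}\to 0$, the map to be proved an isomorphism is therefore the \emph{leftmost} one. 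Because taking $\cH^{0}$ of a distinguished triangle (and taking $g_{*}$ of a short exact sequence) is left exact, the paper only needs the three-term ladder beginning with $0$: the middle vertical map $\cH^{0}(\underline{\Omega}^{p}_{X}\vert_{H}) \to \tilde{\Omega}^{p}_{X}\vert_{H}$ is an isomorphism (the hypothesis restricted to a general $H$), the right vertical map $\cH^{0}(\underline{\Omega}^{p}_{H}) \to \tilde{\Omega}^{p}_{H}$ is injective (always true, by torsion-freeness of $\cH^{0}(\underline{\Omega}^{p}_{H})$), and a four-lemma chase forces the left vertical map to be an isomorphism. No $\cH^{1}$ terms enter, no induction on $p$ is needed, and no Nakayama argument is required; the quasi-rational clause follows by ranging over all $p$.

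By instead aiming at the rightmost map $\kappa'_{p}\colon \cH^{0}(\underline{\Omega}^{p}_{H}) \to \tilde{\Omega}^{p}_{H}$, you are forced to control the connecting maps into $\cH^{1}(\underline{\Omega}^{p-1}_{H}\otimes L^{-1})$ and $R^{1}g_{*}\Omega^{p-1}_{\tH}(-\tH)$, and the patch you propose for this does not hold up. The surjection $\coker(\kappa'_{p})\otimes \cO(-H') \twoheadrightarrow \coker(\kappa'_{p})$ is asserted rather than derived; in the proof of Theorem \ref{DiffThm} the analogous surjection is extracted from a nested induction on dimension requiring the corresponding vanishing statements on hyperplane sections of $H$ itself, none of which you have here. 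More fundamentally, the endgame fails: a point of the zero-dimensional support of $\coker(\kappa'_{p})$ need not be an isolated singular point of $H$ --- if the singular locus of $X$ has dimension at least $2$, then the singular locus of $H$ is positive-dimensional and passes through such points --- so Theorem \ref{thm:QR} cannot be invoked there. The ``inheritance'' step is also unfounded: Theorem \ref{thm:QR} converts the hypothesis on $X$ into vanishing of restrictions of forms to exceptional divisors only when $X$ has isolated singularities, which is not assumed in the proposition. Finally, your induction consumes the hypothesis on $X$ in every degree $\le p$, so even if repaired it would prove only the ``in particular'' statement about quasi-rational singularities, not the single-degree assertion that is actually claimed.
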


\begin{proof}
   Choose a very ample line bundle $L$ on $X.$ Let $H$ be a general divisor associated to $L$. Then $H$ is normal, and the strict transform $\tilde{H}$ under $f: \tX \rightarrow X$ is nonsingular by Bertini's theorem.  Pushing forward the standard exact sequence
    $$0 \rightarrow \Omega_{\tilde{H}}^{p-1} \otimes f^{*}L^{-1} \rightarrow \Omega^{p}_{\tX}\vert_{\tilde{H}} \rightarrow \Omega^{p}_{\tilde{H}} \rightarrow 0$$
    yields an exact sequence
    $$0 \rightarrow \tilde{\Omega}_{H}^{p-1} \otimes L^{-1} \rightarrow \tilde{\Omega}^{p}_{X}\vert_{H} \rightarrow \tilde{\Omega}^{p}_{H}.$$
    On the other hand, by \cite{GNPP}, we have a distinguished triangle
    $$\bT\underline{\Omega}^{p-1}_{H} \otimes L^{-1} \ar[r] &\underline{\Omega}^{p}_{X}\vert_{H} \ar[r]& \underline{\Omega}^{p}_{H} \ar[r, "+1"] & \hfill. \eT$$
    This triangle is compatible with the previous exact sequence in the sense that we have a commutative diagram
    $$ \bT  0 \ar[r] &\cH^0( \underline{\Omega}^{p-1}_{H}) \otimes L^{-1} \ar[r] \ar[d] & \cH^0(\underline{\Omega}^{p}_{X}\vert_{H}) \ar[r] \ar[d] & \cH^0(\underline{\Omega}^{p}_{H}) \ar[d] \\
    0 \ar[r] & \tilde{\Omega}_{H}^{p-1} \otimes L^{-1} \ar[r] & \tilde{\Omega}^{p}_{X}\vert_{H} \ar[r] &\tilde{\Omega}^{p}_{H}. \eT$$
    Since the middle vertical arrow is an isomorphism, and the last arrow is injective, the first vertical arrow is an isomorphism. Therefore
    $$\cH^{0}(\underline{\Omega}^{p-1}_{H}) \rightarrow \tilde{\Omega}^{p-1}_{H}$$
    is an isomorphism.
\end{proof}

\begin{prop}
  Let $X$ be normal projective variety with quasi-rational singularities, and $f:\tX\to X$ a resolution of singularities. Then
  \begin{equation}
    \label{eq:HpO}
  H^p(X,\cO_X)\to H^p(\tilde X,\cO_{\tX})   
  \end{equation}
  is surjective for all $p$. This is an isomorphism if $p=1$, or if $X$ is Du Bois and $Gr_F^0Gr^W_iH^p(X,\C)=0$
  for $i<p$.
\end{prop}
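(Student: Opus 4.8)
The plan is to factor the map \eqref{eq:HpO} through the bottom piece of the Hodge filtration and reduce everything to a comparison of the outermost graded pieces $Gr^0_F$ and $Gr^p_F$. Since $X$ is projective, $\HH^p(X,\underline{\Omega}_X^0)=Gr^0_F H^p(X,\C)$ computes the bottom Hodge piece, and the natural map $\cO_X\to\underline{\Omega}_X^0$ together with \eqref{eq:dB} factors \eqref{eq:HpO} as
$$H^p(X,\cO_X)\xrightarrow{a}\HH^p(X,\underline{\Omega}_X^0)=Gr^0_F H^p(X,\C)\xrightarrow{b}H^p(\tX,\cO_{\tX})=Gr^0_F H^p(\tX,\C),$$
where, by Lemma \ref{lemma:F0DR} and the smooth case, both outer identifications are the standard ones and $b=Gr^0_F(f^*)$. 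First I would record the two inputs I will lean on: $f^*\colon H^p(X,\Q)\to H^p(\tX,\Q)$ is a morphism of mixed Hodge structures whose restriction to the top weight is injective (Lemma \ref{InjLemma}, with $H^i_c=H^i$ in the projective case), and quasi-rationality identifies $Gr^p_F(f^*)$ with $H^0(X,\kappa)$, where $\kappa\colon\cH^0(\underline{\Omega}_X^p)\to\tilde{\Omega}^p_X$ is the assumed isomorphism, so that $Gr^p_F(f^*)$ is an isomorphism.

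For surjectivity (for all $p$) I would avoid analyzing $a$ directly and instead use the commutative square coming from the inclusion of constants $\C\to\cO$,
$$\begin{tikzcd}
H^p(X,\C)\arrow[r,"f^*"]\arrow[d] & H^p(\tX,\C)\arrow[d]\\
H^p(X,\cO_X)\arrow[r] & H^p(\tX,\cO_{\tX}).
\end{tikzcd}$$
The right vertical map is the surjection $H^p(\tX,\C)\twoheadrightarrow Gr^0_F H^p(\tX,\C)$ of the smooth projective $\tX$, so it suffices to see that $f^*$ followed by this surjection is onto. Let $V=\im(f^*)\subseteq H^p(\tX,\C)$; as the image of a morphism of Hodge structures into the pure weight-$p$ space $H^p(\tX)$, it is a rational sub-Hodge structure, in particular stable under complex conjugation. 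Because $Gr^p_F(f^*)$ is an isomorphism and $F^{p+1}H^p(\tX)=0$, the space $V$ contains $F^pH^p(\tX)=H^{p,0}(\tX)$; conjugation-stability then gives $H^{0,p}(\tX)\subseteq V$, and since $H^{0,p}$ is the only Hodge piece outside $F^1$ we get $V+F^1H^p(\tX)=H^p(\tX,\C)$, i.e. $V$ surjects onto $Gr^0_F H^p(\tX,\C)=H^p(\tX,\cO_{\tX})$. Chasing the square, \eqref{eq:HpO} is surjective.

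For the isomorphism statements I only need injectivity. When $p=1$ this is immediate: the edge map of the Leray spectral sequence for $f$ sits in the low-degree exact sequence $0\to H^1(X,f_*\cO_{\tX})\to H^1(\tX,\cO_{\tX})\to\cdots$, and $f_*\cO_{\tX}=\cO_X$ by normality, so \eqref{eq:HpO} is injective. When $X$ is Du Bois the map $\cO_X\to\underline{\Omega}_X^0$ is an isomorphism, so $a$ is an isomorphism and \eqref{eq:HpO} equals $b=Gr^0_F(f^*)$. Since $f^*$ is a morphism of mixed Hodge structures and $H^p(\tX)$ is pure of weight $p$, one has $\ker(f^*)=W_{p-1}H^p(X,\Q)$ by Lemma \ref{InjLemma}; exactness of $Gr^0_F$ gives $\ker b=\bigoplus_{i<p}Gr^0_F Gr^W_i H^p(X,\C)$, which vanishes under the hypothesis $Gr^0_F Gr^W_i H^p(X,\C)=0$ for $i<p$. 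Thus $b$ is injective, and with surjectivity \eqref{eq:HpO} is an isomorphism.

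The main obstacle is the surjectivity step, and within it the precise identification $Gr^p_F(f^*)=H^0(X,\kappa)$: this requires that the Hodge-graded of the pullback really be computed by the functorial map \eqref{eq:dB} of Du Bois complexes (Deligne--Du Bois theory), so that quasi-rationality can be fed in. The Hodge-symmetry/conjugation bridge from $Gr^p_F$ to $Gr^0_F$ is what lets the single hypothesis on $\cH^0(\underline{\Omega}_X^p)$ control $\cO$-cohomology; checking that $\im(f^*)$ is genuinely a sub-Hodge structure (hence conjugation-stable) and that the square with constant sheaves commutes are the points I would verify with care.
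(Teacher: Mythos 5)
Your proof is correct, and its two isomorphism cases coincide with the paper's: for $p=1$ you both use the Leray edge map together with $f_*\cO_{\tX}=\cO_X$, and in the Du Bois case you both identify \eqref{eq:HpO} with $Gr^0_F(f^*)$ and compute its kernel as $Gr^0_F W_{p-1}H^p(X,\C)$ via Lemma \ref{InjLemma} and exactness of $Gr^0_F$ (pedantically, the equality $\ker f^*=W_{p-1}H^p(X,\Q)$ also uses purity of $H^p(\tX,\Q)$ for the inclusion $W_{p-1}\subseteq\ker f^*$, but that is immediate). Where you genuinely diverge is the surjectivity step. Both arguments rest on the same conjugation bridge between $Gr^p_F$, where quasi-rationality enters, and $Gr^0_F$, where \eqref{eq:HpO} lives, but the paper executes it on weight-graded pieces: it first proves, by induction on $\dim X$ using the Mayer--Vietoris sequence of the resolution square, that $F^p\cap W_{p-1}H^p(X,\C)=0$; this identifies $H^0(X,\underline{\Omega}_X^p)$ with $F^pGr^W_pH^p(X,\C)$, quasi-rationality makes the map to $F^pH^p(\tX,\C)$ an isomorphism, conjugation converts this into $Gr_F^0Gr^W_pH^p(X,\C)\cong Gr_F^0H^p(\tX,\C)$, and the closing diagram chase implicitly needs the standard fact that $H^p(X,\cO_X)\to Gr^0_FH^p(X,\C)$ is surjective for proper $X$. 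You instead work with $V=\im(f^*)\subseteq H^p(\tX,\C)$: it is conjugation-stable because $f^*$ is defined over $\Q$, it contains $H^{p,0}(\tX)=F^pH^p(\tX,\C)$ because $Gr^p_F(f^*)$ is onto and $F^{p+1}H^p(\tX,\C)=0$, hence it contains $H^{0,p}(\tX)$ and surjects onto $Gr^0_FH^p(\tX,\C)$; your constant-sheaf square then requires only surjectivity of the composite $H^p(X,\C)\to H^p(\tX,\cO_{\tX})$, never of $H^p(X,\cO_X)\to Gr^0_FH^p(X,\C)$. Your route is leaner: no induction on dimension, no weight claim, no implicit appeal to that surjectivity. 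What the paper's route buys in exchange are finer by-products, namely the vanishing $F^p\cap W_{p-1}H^p(X,\C)=0$ and the isomorphism $Gr_F^0Gr^W_pH^p(X,\C)\cong Gr_F^0H^p(\tX,\C)$ for any quasi-rational projective $X$, which are statements about the mixed Hodge structure of $X$ itself rather than only about the map \eqref{eq:HpO}. The one step you should spell out (you flag it yourself) is the identification $Gr^p_F(f^*)=H^0(X,\kappa)$: it follows from Du Bois' theorem that the filtered complex $(\underline{\Omega}_X^{\bullet},F)$ computes Deligne's Hodge filtration functorially, from the degeneration of the associated spectral sequence for projective $X$, and from the fact that $\underline{\Omega}_X^p$ has cohomology sheaves only in non-negative degrees, so that $\HH^0(X,\underline{\Omega}_X^p)=H^0(X,\cH^0(\underline{\Omega}_X^p))$; all of this is standard, so there is no gap.
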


\begin{proof}
  We claim that  $F^p\cap W_{p-1}H^p(X,\C)=0$.
Let $S\subset X$ be the singular locus, and $E= f^{-1}S$.   By induction on dimension, we
can assume that $F^p\cap W_{p-1}H^p(S,\C)=0$.
We have the standard ``Mayer-Vietoris" exact sequence
   $$H^{p-1}(E, \C) \to H^p(X,\C)\to H^{p}(\tilde X,\C)\oplus H^{p}(S,\C)\to
   H^p(E,\C)\ldots $$
 Since $F^pH^{p-1}(E,\C)=0$, we see from this and that the strictness properties of $F$ and $W$ that $F^p\cap W_{p-1} H^p(X,\C)$ injects into
$$ F^p\cap W_{p-1}H^{p}(\tilde X,\C)\oplus F^p\cap W_{p-1}H^{p}(S,\C) =0$$
This proves the claim.

    We can identify
     $$H^0(X,\tilde \Omega_X^p)  = F^pH^p(\tilde X,\C)$$
     and
     $$H^0(X,\underline{\Omega}_X^p)\cong  F^pH^p(X, \C)\cong F^pGr^W_{p}H^p(X, \C).$$
     The last isomorphism follows from the above claim and by observing that $W_pH^p(X, \C)= H^p(X, \C)$ because $X$ is projective. The map
     \begin{equation}
       \label{eq:qROm}
      H^0(X,\underline{\Omega}_X^p)\to H^0(X,\tilde \Omega_X^p) 
     \end{equation}
           is an isomorphism because $X$ has quasi-rational singularities. Therefore
     $$Gr_F^0Gr^W_pH^p(X,\C)\to Gr_F^0H^p(\tX,\C)$$
     is an isomorphism because it is complex conjugate to the previous map.
     The surjectivity of  \eqref{eq:HpO} follows from the commutative diagram
$$
\xymatrix{
 Gr_F^0Gr^W_pH^p(X,\C)\ar[r] & Gr_F^0H^p(\tX,\C) \\
 H^p(X,\cO_X)\ar[r]\ar[u] & H^p(\tilde X,\cO_{\tX})\ar[u]^{=}
}
$$

When $p=1$, the  map \eqref{eq:HpO}
is injective because it is an edge map for the Leray spectral sequence. Therefore, we have an isomorphism in this case.
When $X$ is Du Bois, we can identify the \eqref{eq:HpO}  with
$$Gr_F^0H^p(X,\C)\to Gr_F^0H^p(\tilde X,\C)$$
The kernel of this map is $Gr_F^0 W_{p-1}H^p(X,\C)$. This would vanish if $Gr_F^0Gr^W_iH^p(X,\C)=0$
  for all $i<p$. Therefore, \eqref{eq:HpO} is injective with this assumption.

\end{proof}

For the following statement, it is worth observing that by excision, the mixed Hodge structure $H_x^p(X,\Z)$
is an invariant of the singularity at $x$.

\begin{thm}
  Suppose that $X$ is a normal variety with isolated singularities. Then
  $X$ has rational singularities if and only if the following conditions hold:
  \begin{enumerate}
  \item[(R1)] $X$ has quasi-rational singularities.
  \item[(R2)] $X$ is Du Bois.
    \item[(R3)] For all $x\in X$, all $0<p\le \dim X$, and all $k<p$, $Gr_F^0Gr^W_kH_x^p(X,\C)=0$.
  \end{enumerate}
  
\end{thm}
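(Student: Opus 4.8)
The plan is to prove both directions by reducing rationality to the acyclicity of the complex $Gr^{0}_{F}DR(K)$, where $K=Cone(\Q^{H}_{X}[n]\to IC^{H}_{X})$ as in the proof of Theorem \ref{HodgeThm}. By Lemma \ref{lemma:F0DR} there is a distinguished triangle
$$\Om[n]\to \bR f_{*}\cO_{\tX}[n]\to Gr^{0}_{F}DR(K)\xrightarrow{+1},$$
so once $X$ is Du Bois (so that $\Om\cong\cO_{X}$ by normality and Lemma \ref{lemma:dBtorsionfree}) the triangle identifies $\cH^{p-n}(Gr^{0}_{F}DR(K))$ with $R^{p}f_{*}\cO_{\tX}$ for $p\geq 1$. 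Hence, under (R2), $X$ has rational singularities if and only if $Gr^{0}_{F}DR(K)$ is acyclic; and since $\Q^{H}_{X}[n]\to IC^{H}_{X}$ is an isomorphism over the smooth locus, $Gr^{0}_{F}DR(K)$ is a skyscraper complex supported on the finite set of singular points.

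For the forward implication, (R1) and (R2) are immediate from Corollary \ref{cor:Omegareflex} and the Du Bois corollary following Theorem \ref{VanishingThm}. For (R3), rationality makes $Gr^{0}_{F}DR(K)$ acyclic, so applying $\bR\underline{\Gamma}_{x}$ to the triangle defining $K$ and using strictness of the Hodge filtration yields, for each singular point $x$, an isomorphism of mixed Hodge structures $Gr^{0}_{F}H^{p}_{x}(X,\C)\cong Gr^{0}_{F}IH^{p}_{x}(X,\C)$. Because $IC^{H}_{X}$ is pure of weight $n$, its costalk $i_{x}^{!}IC^{H}_{X}$ has weights $\geq n$, so $Gr^{W}_{k}IH^{p}_{x}(X,\C)=0$ for $k<p$; passing to $Gr^{W}_{k}$ in the isomorphism above gives $Gr^{0}_{F}Gr^{W}_{k}H^{p}_{x}(X,\C)=0$ for $k<p$, which is exactly (R3). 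This part is clean because it uses only the acyclicity, the purity of the costalk, and the exactness of $Gr^{0}_{F}$, and never the problematic identification of the Hodge filtration with differential forms.

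For the converse I would assume (R1)--(R3) and, since rationality is local, reduce (after compactifying to a normal projective model with the same isolated singularity germs) to proving that the natural maps $H^{p}(X,\cO_{X})\to H^{p}(\tX,\cO_{\tX})$ are isomorphisms for all $p$. Indeed, as $R^{p}f_{*}\cO_{\tX}$ is a skyscraper for $p\geq 1$, the Leray spectral sequence $H^{a}(X,R^{b}f_{*}\cO_{\tX})\Rightarrow H^{a+b}(\tX,\cO_{\tX})$ collapses enough that injectivity of these maps in degree $p+1$ forces the differential $d_{p+1}$ out of $(R^{p}f_{*}\cO_{\tX})_{x}$ to vanish, while surjectivity in degree $p$ then forces $R^{p}f_{*}\cO_{\tX}=0$. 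The isomorphism $H^{p}(X,\cO_{X})\cong H^{p}(\tX,\cO_{\tX})$ is precisely the conclusion of the Proposition preceding this theorem, whose hypotheses are quasi-rationality, the Du Bois property, and the \emph{global} vanishing $Gr^{0}_{F}Gr^{W}_{i}H^{p}(X,\C)=0$ for $i<p$. Quasi-rationality (R1) enters there through the identifications $H^{0}(X,\Omp)=F^{p}H^{p}(X,\C)$ and $H^{0}(X,\tilde{\Omega}^{p}_{X})=F^{p}H^{p}(\tX,\C)$, which match the top Hodge pieces of $X$ and $\tX$.

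The main obstacle is this reverse direction, and it has two layers. Conceptually, the heart is that quasi-rationality is the hypothesis that repairs the relation between the Hodge filtration and the sheaves of reflexive differentials: for non-proper or purely local data the de Rham filtration need not compute the Hodge filtration, so one cannot simply read off $Gr^{p}_{F}$ from $\cH^{0}(\Omp)$, and (R1) is exactly what makes the top Hodge pieces agree. Technically, I must convert the local condition (R3), stated on the germs $H^{p}_{x}(X,\C)$, into the global Hodge vanishing $Gr^{0}_{F}Gr^{W}_{i}H^{p}(X,\C)=0$ for $i<p$ required by the Proposition; for isolated singularities this should follow from the weight-strict ``Mayer--Vietoris'' sequence relating $H^{p}(X,\C)$, $H^{p}(\tX,\C)$, $H^{p}(E,\C)$ and the local terms, together with the purity of $H^{p}(\tX,\C)$, but verifying that the lower-weight part of $H^{p}(X,\C)$ is governed entirely by the local data at the singular points, and that the chosen compactification introduces no new low-weight classes, is where the care is needed.
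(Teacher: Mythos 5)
Your forward direction for (R3) takes a genuinely different route from the paper, and it can be made to work --- but the reason it works deserves emphasis. Because the singularities are isolated, $K=Cone(\Q^{H}_{X}[n]\to IC^{H}_{X})$ is a finite direct sum of skyscrapers $i_{x+}K_{x}$ with each $K_{x}$ a complex of mixed Hodge structures; since morphisms of MHS are strict, $Gr^{0}_{F}H^{j}(K_{x})=H^{j}(Gr^{0}_{F}K_{x})$, and only for this reason does acyclicity of $Gr^{F}_{0}DR(K)$ (which follows from rational $\Rightarrow$ Du Bois together with the triangle of Lemma \ref{lemma:F0DR}) yield $Gr^{0}_{F}H^{p}_{x}(X,\C)\cong Gr^{0}_{F}\IH^{p}_{x}(X,\C)$. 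For non-punctual supports this commutation fails --- it is exactly the de Rham-versus-Hodge filtration problem of Section 2 --- so your remark that the argument ``never uses the problematic identification'' is true only because of the skyscraper structure, and you should say so. You also need two citable inputs: Saito's costalk weight bound (for $IC^{H}_{X}$ pure of weight $n$, the weights of $H^{j}(i_{x}^{!}IC^{H}_{X})$ are $\geq n+j$, giving $Gr^{W}_{k}\IH^{p}_{x}=0$ for $k<p$), and the compatibility of Saito's MHS on $H^{p-n}(i_{x}^{!}\Q^{H}_{X}[n])$ with the classical MHS on $H^{p}_{x}(X,\C)$ appearing in (R3). The paper instead argues topologically: excision and the decomposition-theorem surjectivity of $H^{p-1}(\tilde{B})\to H^{p-1}(B-x)$ (Steenbrink) give an injection of MHS $H^{p}_{x}(X)\hookrightarrow H^{p}(E)$, and $Gr^{0}_{F}H^{p}(E)=H^{p}(E,\cO_{E})=0$ because $(R^{p}\pi_{*}\cO_{\tX})_{x}$ surjects onto it. Your version trades Steenbrink's surjectivity for more mixed-Hodge-module formalism; both are legitimate.

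The genuine gap is in the converse, and you flagged it yourself: converting the pointwise hypothesis (R3) into the global vanishing $Gr^{0}_{F}Gr^{W}_{k}H^{p}(X,\C)=0$ for $k<p$ that the preceding Proposition requires. Your proposed route through a Mayer--Vietoris sequence involving $\tX$ and $E$, plus worries about the compactification creating new low-weight classes, is more roundabout than necessary and is left unverified. The paper closes this step in three lines: after reducing to $X$ projective (the conditions and the conclusion are all local), the exact sequence of MHS $\bigoplus_{x\in S}H^{p}_{x}(X,\C)\to H^{p}(X,\C)\to H^{p}(U,\C)$, together with the fact that $H^{p}(U,\C)$ has weights $\geq p$ because $U$ is smooth (so $Gr^{W}_{k}H^{p}(U,\C)=0$ for $k<p$), shows that $\bigoplus_{x}Gr^{0}_{F}Gr^{W}_{k}H^{p}_{x}(X,\C)\to Gr^{0}_{F}Gr^{W}_{k}H^{p}(X,\C)$ is surjective for $k<p$, and (R3) then kills the target; no comparison with $H^{*}(E)$ is needed. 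With that step supplied, the rest of your chain --- the Proposition gives $H^{p}(X,\cO_{X})\cong H^{p}(\tX,\cO_{\tX})$ for all $p$, and the Leray sequence, degenerate because the sheaves $R^{q}\pi_{*}\cO_{\tX}$ are punctual, forces $R^{q}\pi_{*}\cO_{\tX}=0$ --- coincides with the paper's argument.
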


\begin{proof}
  Suppose $X$ has rational singularities. Then $X$ has quasi rational singularities by Corollary \ref{cor:Omegareflex} 
  and it has Du Bois singularities by \cite{kovacs}. We check condition (R3). Let  $0<p\le \dim X$.
  Choose a log resolution $\pi:\tX\to X$,
  a contractible neighbourhood $x\in B\subset X$ of a singular point. Let $\tilde B= \pi^{-1}B$ and $E\subset \tilde B$ the reduced exceptional divisor. By excision $H^p_x(X) = H_x^p(B) = H^p_E(\tilde B)$. Therefore, we have an exact sequence
  $$H^{p-1}(\tilde B)\to H^{p-1}(B-x)\to H_x^p(X)\to H^p(\tilde B) = H^p(E)$$
  When $0<p\le \dim X$, a standard consequence of the decomposition theorem is that 
  the first map is surjective (cf  \cite[thm 1.11]{SteenbrinkIsolated}). Therefore,
   we have an injection
  $$H^p_x(X)\to H^p(E)$$
  This map is compatible with mixed Hodge structures. Therefore
  $$Gr^0_FH^p_x(X)\to Gr^0_FH^p(E)= H^p(E, \cO_E)$$
  is injective. The second space is zero because $(R^p\pi_*\cO_{\tX})_x$ surjects onto it.

  Next, we prove the converse. Suppose that (R1), (R2), and (R3) hold.
  Since the conditions are local,  we can assume that $X$ is projective.
  Let $U\subset X$ denote the smooth locus, and $S= X-U$.
Since $U$ is smooth, the mixed Hodge structure on $H^p(U)$ has weights $\ge p$.
  Therefore, the  exact sequence
  $$ \bigoplus_{x\in S} H_x^p(X, \C)\to H^p(X,\C)\to H^p(U, \C)$$
  implies that the map
  $$\bigoplus Gr^0_F Gr^W_k H^p_x(X)\to Gr^0_F Gr^W_k H^p(X)$$
  is surjective when $k<p$.
  Therefore, we have isomorphisms
  $$H^p(X,\cO_X)\to H^p(\tilde X,\cO_{\tX})$$
  by the previous proposition.

  The  sheaves $R^{i}\pi_{*}\cO_{\tX}$ have zero dimensional support. Therefore, the Leray spectral sequence
    $$E^{p,q}_{2}= H^{p}(X, R^{q}\pi_{*}\cO_{\tX})$$
    reduces to a long, exact sequence
    
\begin{equation}\label{eq:LerayExS}
    \bT \cdots \ar[r] & E^{p,0}_{2} \ar[r, "{\sim}"]& H^{p}(\tX, \cO_{\tX}) \ar[r] & E^{0,p}_{2} \ar[r] & E^{p+1,0} \ar[r, "{\sim}"] &\cdots. \eT 
\end{equation}

Consequently $H^0(X,R^{i}\pi_{*}\cO_{\tX}) = 0$ for $i >0$, which implies that the sheaves are zero.

\end{proof}

We can see by the following examples that none of the conditions in the above theorem can be omitted.

\begin{ex}
 A cusp singularity   satisfies (R1) (Example \ref{ex:cusp}) and  (R2) \cite[thm 3.8]{SteenbrinkIsolated} but not (R3).
\end{ex}

\begin{ex}
 The  singularity $X$ defined by  $ z^2+x^3+y^7=0$ satisfies
 (R1) and (R3) but not (R2). Indeed, for (R1) and (R3), it suffices to observe that the exceptional divisor in a certain resolution is a chain of smooth rational curves (see \cite[pp 197-198]{lipmanRes}). However, it is not Du Bois by \cite[thm 3.8]{SteenbrinkIsolated}.
\end{ex}

\begin{ex}
 A cone over an elliptic curve satisfies (R2)  \cite[thm 3.8]{SteenbrinkIsolated} and (R3) but not (R1).
\end{ex}

\printbibliography 

\end{document}